\def\HyPsd@expand@utfvii{}\makeatother 
\def\hrefurl#1{\href{#1}{\rule{0ex}{1.7ex}\color{blue}\underline{\smash{#1}}}} 
\def\fillpagebreak#1{\par\vspace{0pt plus #1\textheight}\pagebreak[3]\vspace{0pt plus -#1\textheight}} 
\def\nq{\hspace{-1em}}          
\def\fr#1#2{{\textstyle\frac{#1}{#2}}} 
\def\frs#1#2{{^{#1}\!/\!_{#2}}} 
\def\trp{{\!\top\!}}            
\def\nq{\hspace{-1em}}          
\def\eoe{\hspace*{\fill} $\diamondsuit\quad$\ntc{\\}} 
\def\qedeq{\rule{1.4ex}{1.4ex}} 
\def\qed{\hspace*{\fill}\rule{1.4ex}{1.4ex}$\quad$\\} 
\def\citep{\cite}
\def\citet{\cite}
\def\bfm{\bfseries\boldmath} 
\newenvironment{keywords}{\centerline{\bf\small Keywords}\begin{quote}\small}{\par\end{quote}\vskip 1ex}
\newtheorem{theorem}{Theorem}
\newtheorem{definition}[theorem]{Definition}
\newtheorem{proposition}[theorem]{Proposition}
\newtheorem{lemma}[theorem]{Lemma}
\newtheorem{claim}[theorem]{Claim}
\newtheorem{observation}[theorem]{Observation}
\newtheorem{example}[theorem]{Example}
\renewenvironment{proof}{{\noindent\bf Proof.}}{\vskip 1ex}
\def\tc#1{}\def\ntc#1{#1} 
\def\,{\mskip 3mu} \def\>{\mskip 4mu plus 2mu minus 4mu} \def\;{\mskip 5mu plus 5mu} \def\!{\mskip-3mu}
\def\v{\boldsymbol}
\def\vt{{\boldsymbol\theta}}
\def\vl{{\boldsymbol\lambda}}
\def\Var{{\mathbb{V}}}
\def\Cov{{\rm Cov}}
\def\W{{\mathbb W}}
\def\Hiid{H_{\rm iid}}
\def\Halt{H_{\overline{\rm iid}}}
\def\smax{10'000}
\def\simequd{\mathrel{\rotatebox[origin=c]{180}{$\simeq$}}}
\begin{document}

\title{\vspace{-4ex}
\normalsize\sc \vskip 2mm\bf\Large\hrule height5pt \vskip 4mm 
Testing Independence of \\ Exchangeable Random Variables
\vskip 4mm \hrule height2pt
}
\author{{\bf Marcus Hutter}\\[3mm]
  \normalsize DeepMind\\
  \small Latest version \& more @ \\ \small \hrefurl{http://www.hutter1.net/official/bib.htm\#exiid}}
\date{22 October 2022}
\maketitle

\begin{abstract}%
Given well-shuffled data, can we determine whether the data items are statistically (in)dependent?
Formally, we consider the problem of testing whether a set of exchangeable random variables are independent.
We will show that this is possible and develop tests that can confidently reject the null hypothesis that data is 
independent and identically distributed and have high power for (some) exchangeable distributions.
We will make no structural assumptions on the underlying sample space.
One potential application is in Deep Learning, 
where data is often scraped from the whole internet, with duplications abound, 
which can render data non-iid and test-set evaluation prone to give wrong answers.

\vspace{3ex}\def\contentsname{\centering\normalsize Contents}\setcounter{tocdepth}{1}
{\parskip=-2.9ex\tableofcontents}
\end{abstract}

\begin{keywords} 
  independent; identically distributed; exchangeable random variables; statistical tests; unstructured data.%
\end{keywords}

\newpage
\section{Introduction}\label{sec:intro}

We consider the problem of testing whether a set of exchangeable random variables $X_1,...,X_n$
are independent, solely from observations $x_{1:n}:=x_1 x_2 ... x_n$ sampled from $Q$.
A distribution $Q(x_1,...,x_n)$ is called (finitely) exchangeable if it is invariant under all (finite) permutations of its argument.
We make no structural assumptions on the underlying probability space ($𝓧^n,Σ,Q$) beyond $Q$ being exchangeable,
and of course that $𝓧⊇\{x_1,...,x_n\}$.
Less formally, assume we have observed $x_{1:n}$, which we believe to be well-shuffled,
and want to know whether they originated from some iid distribution $P_\vt$.
The shuffling implies that the $X_t$ are identically distributed, 
but it does not make them independent.%

A priori one may think this is a hopeless problem.
For instance, if we remove the `identically distributed' condition,
every $x_{1:n}$ is independent w.r.t.\ \emph{some} non-iid distribution.
One can always take $P[X_t=x_t]=1~∀t$,
i.e.\ no valid test can reject the hypothesis that $x_{1:n}$ are independent,
unless one makes some further assumptions on $P$.

The exchangeability assumption implies that the only useful information in $x_{1:n}$ is the counts $n_x:=|\{x_t:x_t=x\}|$ of each $x∈𝓧$.
They form a minimal sufficient statistic.
Due to the assumed lack of structure in $𝓧$,
the specific label $x$ also bears no information,
so we may as well injectively map each $x_t$ to a label from $\{1,...,d''\}$,
where $d''∈ℕ$ is the number of \emph{different} $x_t$ in $x_{1:n}$,
and can even sort them e.g.\ w.r.t.\ decreasing $n_x$.
That means, the only useful information in $x_{1:n}$ is actually the second-order counts $m_k:=|\{x:n_x=k\}|$.
All this will be made clear later.
We are primarily interested in the case of low duplicity, 
i.e.\ most $n_x$ are small, though our results are general.

\paragraph{Contents.}
We provide some motivating examples in \Cref{sec:app},
which also informally show that the second-order counts $m_k$ 
can indeed sometimes reveal that $x_{1:n}$ did not come from an iid process.
A practical example is data duplication in machine learning and the test set contamination problem it results in.
In \Cref{sec:Form} we introduce first-order counts $n_x$ and second-order counts $m_k$, 
exchangeable distributions, 
and the nature of statistical tests in this context for finite and countable $𝓧$.
In particular we reduce iid distributions to multinomial distributions,
and then for our tests to a mixture of Poisson distributions. 
We then show in \Cref{sec:XtoN} that we can reduce every $𝓧$ whatsoever
to $𝓧=ℕ$ or $𝓧=\{1:d\}$ with discrete $σ$-algebra $Σ=2^𝓧$.
After this lengthy preparation, we are finally able to develop our statistical tests in \Cref{sec:iidtests}.
The tests we consider are based on the observation that a mixture of Poisson distributions is ``smooth'',
so if $m_k$ as a function of $k$ is not sufficiently smooth,
this can be used as evidence for dependence. 
The tests are summarized in \Cref{thm:iidtests}.
We experimentally verify our tests in \Cref{sec:exp} on artificially generated data.
In \Cref{sec:moretests} we give an outlook on alternative ways of deriving iid tests for exchangeable data.
\Cref{sec:disc} concludes.

In \Cref{sec:Lemmas}, we state/derive a number of technical lemmas we require to derive our tests.
For improving the power of our tests, in \Cref{secm:iidtests} we derive upper bounds on our test statistics analogous to \Cref{sec:iidtests} 
but without the Poisson approximation, i.e.\ directly for iid $𝓧$ or the multinomial distribution.
Details on the multinomial and product of Poisson distributions and their relation can be found in \Cref{secm:Lemmas}.
They are used to derive upper bounds on the variance of our tests in the multinomial model.
A list of notation can be found in \Cref{sec:Notation}.

\paragraph{Unrelated work.}
Independence tests in the literature most often refer to testing 
whether a pair of random variables $(X,Y)$ is independent,
given a number of iid(!)\ sample pairs $\{(x_t,y_t)\}$
(mutual information and chi-square tests are popular).
Our setup is totally different and much harder.

Another setup is stochastic processes.
Dependence can be tested via estimating auto-correlation coefficients, 
but this requires ordered data and $𝓧=ℝ$.
One could use some other independence test on the pairs $\{(x_{t-1},x_t)\}$ 
without the $𝓧=ℝ$ assumption to test a Markov vs iid hypothesis,
and/or adapt auto-correlation tests to unordered data.
We briefly remark on this in \Cref{sec:moretests}.

\section{Examples \& Potential Applications}\label{sec:app}

In this section we will provide some motivating examples and potential applications.
This will also provide some intuition why rejecting the hypothesis $\Hiid$ that data is iid is possible at all,
but also the difficulty from not having any more structure available.
We consider biased coin flips (binomial process), Black Jack, and data duplication.
We also discuss the relevance to machine learning, 
whose dominant training paradigm still operates under the iid assumption.

\paragraph{Binomial.} 
Consider a binary sequence $x_{1:n}=x_1 x_2 ... x_n∈\{0,1\}$ of length $n=1000$, say.
If $x_{1:n/2}=1^{n/2}$ and $x_{n/2+1:n}=0^{n/2}$ 
we confidently reject the hypothesis $\Hiid$ that $x_{1:n}$ was sampled i.i.d.
But we are unlikely to observe such a sequence
if $x_{1:n}$ is well-shuffled (sampled from an exchangeable process).
If we shuffle 500 ones and 500 zeros,
a typical $x_{1:n}=0111100101...0100100100$ looks random,
or does it? There are exactly $n_0=n_1=n/2=500$ ones and zeros.
While for a fair coin, we expect \emph{about} $n/2$ ones,
would or should you believe anyone telling you that this is a sequence of fair coin flips?
The probability of observing \emph{exactly} $n/2$ ones in a sequence of $n$ fair coin flips 
is around $\sqrt{2/πn}=2.5\%$, so a test for $N_1\smash{\stackrel?=}n/2$ 
would confidently reject the hypothesis that the $x_{1:n}$ above arose from a fair coin.

What about $n=1'000'000$ and $n_1=314'159$.
Obviously this is not from a fair coin,
but our aim is to test for iid, not fairness.
Could such a sequence have been the result of a biased coin?
Since we assume the bits to be perfectly shuffled,
$n_1$ is a sufficient statistic,
so any test plausibly should only depend on $n_1$,
and not the sequence $x_{1:n}$ itself.
The probability that $N_1=n_1$ is $≤0.00086$ for a coin of any bias, 
i.e.\ test $\smash{N_1\stackrel?=n_1}$ would reject $\Hiid$.
Of course, tests have to be designed before observing the data,
and a-priori $n_1=314'159$ is unlikely,
so such a test is unlikely to have any power.
We can of course combine tests and apply a union bound,
but not too many, otherwise the tests become too weak.
$n/2$ seems special, so maybe we should put such a test in the mix,
but what about $314'159$? It's the first 6 digits of $\pi$.
Maybe this is too much numerology, 
but what about testing for prime $n_1$?
The density of primes $p$ is around $1/\ln p$,
so a-priori we should expect an $n_1$ around $3⋅10^5$ to be composite (with confidence $1-1/\ln(n_1)\dot=93\%$).
Maybe this is just not enough to reject $\Hiid$,
but we could always up the numbers.

Imagine $n$ so large that the binary representation of $n_1$ contains
some encrypted message or a long segment of Chaitin's number of wisdom.
In general, finding every pattern in $n_1$ is an AI-complete problem.
There are universal tests which in principle could test for all such eventualities,
and in some situations practical approximations thereof can be very powerful.
We discuss them briefly in \Cref{sec:moretests},
but we were not able to make them work as well as the specific tests develop in this paper,
so will not consider universal tests any further
(nor will we delve into numerology any further).

\paragraph{Black Jack.}
A standard deck of cards without Jokers consists of 52 cards of 13 ranks, each in four suits, two red and two black.
If we shuffle together infinitely many such decks and then draw $n$ cards,
this equivalently to drawing cards uniformly iid from the $|𝓧|=52$ different card faces.
If we have only one deck and draw all 52 cards from it,
we obviously observe every card exactly once.
Such an outcome would be extremely unlikely had we drawn 52 cards 
from an infinite set of decks (see \eqref{eq:once}).

Assume now an unknown number of decks have been shuffled together.
Assume $n$ cards have been drawn so far from this pile and we remembered their face (called card counting strategy).
An interesting question is to infer the number of decks the cards have been drawn from.
Or consider the weaker question: Are $x_{1:n}$ consistent with $\Hiid$?
If yes, we cannot infer the next card face better than by chance (1/52),
so should not waste our time trying to do so, 
and wait with raising the stakes for when we have seen more cards.
The answer to this question is relevant even if we know the number of decks $c$.
For Black Jack, 1-8 decks are used, in casinos often 6.
If $n$ is small, we will not be able to reject $\Hiid$,
but if we have seen all cards, then each card will appear exactly $c$ times,
again ruling out $\Hiid$. If we are close to the end of the pile,
most faces will have appeared $c$ times, none more, and only a few significantly less.
Even mid-way through the pile, each face has appeared at most $c$ times,
which is evidence against $\Hiid$ for small $c$.
For instance, the chance of seeing no face twice when drawing 26 cards iid from 52 faces is less than 0.2\% 
(cf.\ the birthday paradox).
That is, latest half-way through a single deck, this fact is revealed.
If cards are drawn from 2 decks, 
more than 52 cards are needed to reveal that they are not iid
(Figure~\ref{fig:MEMPV2} bottom right).

\paragraph{Data duplication.}
In modern Machine Learning, esp.\ Deep Learning, data $x_{1:n}$ is abundant (large $n$) and 
observation spaces are huge (large $𝓧$). 
For instance, ImageNet consists of over 14 million images, 
usually resized or cropped to e.g.\ $224×224$ pixels of $256×256×256$ colors,
i.e.\ $𝓧={256}^{3×224×224}$. We can as well assume that $𝓧$ is infinite.
Assume $x_{1:n}$ contains no duplicate images and is well shuffled. 
As we will show later, no valid test can reject $\Hiid$ in this case.
But if $x_{1:n}$ contains duplicates one may be able to reject $\Hiid$,
similarly to the Black Jack example above, 
even without knowing anything about the observation space $𝓧$.
For instance, assume every observation is duplicated,
i.e.\ every $x$ that appears in $x_{1:n}$ appears exactly twice.
For uncountable $𝓧$, if $x$ is sampled from a probability \emph{density},
the probability of sampling the same $x$ twice is $0$.
So duplications can only happen if $P_\vt$ contains point masses, i.e.\ is not purely continuous,
i.e.\ $P_\vt(x)>0$ for some $x$. For finite or countable $𝓧$, this is necessarily true.
But if $P_\vt(x)>0$, then the frequency of seeing $x$ is binomially distributed.
While seeing some $x$ twice is plausible, 
seeing \emph{all} $x$ exactly twice is very unlikely, so we can reject $\Hiid$:
If data is iid and some items are duplicate,
we should also see triples and quadruples, etc.

\paragraph{Relevance for machine learning.}
The predominant training and evaluation protocol in Machine Learning in general and Deep Learning in particular is still to assume the data is iid, 
train on most of the data and evaluate on the rest.
Interestingly, this is true even for models dealing with definitely non-iid text.
For instance, for Transformers, text is crudely chopped into chunks of equal length and then shuffled.
The empirical test loss is an unbiased estimator of the true loss,
so is a proper way of comparing the performance of different models.
Data sizes in modern machine learning are huge,
so that even 10\% held-out data is so much that test noise is often of little concern.
That's at least the general story.

But in Deep Learning, data these days is often scraped from the whole internet,
and duplications abound.
For instance, assume the whole data set contains 3 copies of each data item.
If we randomly split off 10\% as the test set,
then the train set contains nearly all (99\%) of the test set items. 
With heldout-validation a pure memorizer without any generalization capacity will perform nearly 
perfectly on the test set \citep{Hutter:22preqmdl},
but will fail in practice on future data.
Indeed shuffling the data makes this problem the worst \citep{Sogaard:21,Gorman:19}. 
The problem is known as test set contamination, and well known. 

The standard solution is to decontaminate or clean the data,
e.g.\ removing duplicates, but this does not suffice.
One has to remove \emph{approximate} duplicates too.
But at what threshold for example should a document that cites a training-set document verbatim be removed from the test-set?
When are two images scraped from the internet rescaled or cropped or jpeg compressed versions of the same image, and even if so, should they be regarded duplicates?
While approaches exist that meliorate the problem, 
in theory this problem is ill-defined \cite[FAQ]{Hutter:06hprize},
and in practice a huge, actually AI-complete, problem \citep[App.C]{Brown:20}.

Test set evaluation is empirically sound for iid data,
therefore the failure of this paradigm must be attributed to the non-iid nature of the data.
The strength and weakness of the iid tests developed in this paper are that they are completely model- and data-agnostic. 
This makes them universally applicable and valid, but also very weak.
On the other hand, as discussed above, finding good/perfect model- and data-type-sensitive tests 
is itself a difficult/impossible research question beyond the scope of this article. 

\section{Problem Formalization and Preliminaries}\label{sec:Form}

We now introduce notation and concepts used throughout the paper:
general notation, the multinomial and Binomial distributions, 
first-order and second-order counts, 
exchangeable distributions, 
and statistical tests.
The reader familiar with these concepts could skim this section 
to just pick up the notational convention used in this article.

\paragraph{Notation.} 
We use calligraphic upper letters such as $𝓧$ for sets and $|𝓧|$ or $\#𝓧$ for the size of $𝓧$.
Probability spaces are denoted by $(𝓧^n,Σ,P)$ with $d=|𝓧|∈ℕ∪\{∞\}$.
$P_\vt$ denotes iid distributions.
$Q$ denotes exchangeable distributions.
Capital letters $X,N,M,T,E,O,D,C,U,V,Z,Y$ denote random variables,
and corresponding lower case letters samples corresponding to them.
We will use the shorthand $P(x):=P[X=x]$ and similarly for other random variables.
The variance of $Z$ is $\Var[Z]:=𝔼[Z^2]-𝔼[Z]^2$ and the covariance of $Y$ and $Z$ is $\Cov[Y,Z]:=𝔼[YZ]-𝔼[Y]𝔼[Z]$.
In addition to the classical $O()$ notation, 
$O_P(f(n))$ denotes all random functions $F(n)$ for which $∀δ>0~∃c>0~∀n:P[|F(n)|≥c⋅|f(n)|]≤δ$.
We use $f(n)\lesssim g(n)$ to denote $f(n)≤g(n)⋅[1±O_P(1/\sqrt{n})]$,
and similarly $\gtrsim$ and $\simequd$. 
We use $\lesssim$ even when stronger asymptotic or non-stochastic bounds would be possible,
since we mostly care about the leading-order approximation in $n$ and not the approximation error
as long as it tends to zero almost surely for $n→∞$.
See Appendix~\ref{sec:Notation} for more standard notation and beyond.

\paragraph{Multinomial distribution.} 
Let $X_{1:n}:=(X_1,...,X_n)$ be $𝓧$-valued random variables.
Let $x_{1:n}$ be sampled from some probability distribution $P$, 
where $x_t∈𝓧$ for $t∈\{1:n\}:=\{1,...,n\}$.
Though we are interested in general measurable (finite, infinite, uncountable) $𝓧$,
we will show in \Cref{sec:XtoN} that without loss of generality 
we can and hence will assume that $𝓧=\{1:d\}$ or $𝓧=ℕ$ ($d=∞$), and $Σ=2^𝓧$.
We will use the shorthand $P(x_{1:n}):=P[X_{1:n}=x_{1:n}]$ and $P(x_t)=P[X_t=x_t]$ 
and similarly for other random variables.
The null hypothesis $\Hiid=\{P_\vt\}$ is that $X_{1:n}$ are i.i.d.
In this case $P=P_\vt$ for some $\vt∈[0;1]^d$ with $∑_{x=1}^d θ_x=1$ and $P_\vt(x):=θ_x$.
Let $n_x:=\#\{t:x_t=x\}∈\{0:n\}$ be the number of times $x$ appears in $x_{1:n}$, called (first-order) \emph{item count},
and $N_x:=\#\{t:X_t=x\}$ be the corresponding random variables.
Then $P_\vt$ can be written as
\begin{align*}
  P_\vt(x_{1:n}) ~=~ P_\vt(x_1)⋅...⋅P_\vt(x_n) ~=~ θ_{x_1}⋅...⋅θ_{x_n} ~=~ ∏_{x=1}^d θ_x^{n_x}
\end{align*}
This expression is independent of the order of $x_{1:n}$, which leads to the multinomial distribution
\begin{align}\label{eq:multinom}
  P_\vt[N_{1:d}=n_{1:d}] ~=~ \Big({n\atop n_1,...,n_d}\Big)\smash{ ∏_{x=1}^d θ_x^{n_x}}
\end{align}
In particular the probability of event $N_x=k$ has a binomial distribution
(see \Cref{secm:Lemmas} for further details):
\begin{align}
  P_\vt[N_x=k] ~&=:~ P_{θ_x}(k) ~=~ f_k^n(θ_x) \nonumber \\
  f_k^n(θ) ~&:=~ P_θ(k) ~=~ ({\textstyle{n\atop k}})θ^k(1-θ)^{n-k}  \label{eq:binom}
\end{align}

\paragraph{Poisson distribution.}
\begin{lemma}[\bfm Poisson distribution]\label{lem:poisson}
The Poisson($λ$) distribution $P_λ(k):=λ^k e^{-λ}/k!=:g_k(λ)$ for $λ≥0$ and $k∈ℕ_0$ has the following properties:
$𝔼[k]=\Var[k]=λ$. For fixed $k$ it is unimodal in $λ$ with maximum at $λ^*=k$ and 
$\max_λ P_λ(k)=P_k(k)=(1-\dot{ε}_k)/\sqrt{2πk}$ and $0≤\dot{ε}_k→0$ \eqref{eq:stirling}.
\end{lemma}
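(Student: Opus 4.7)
The plan is to dispatch each of the three assertions separately via direct manipulation of the pmf $g_k(λ)=λ^k e^{-λ}/k!$, with only the last piece requiring anything beyond elementary calculus.

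First, for the mean and variance, I would use the standard series re-indexing trick. Writing $𝔼[k]=∑_{k≥1}k\,λ^k e^{-λ}/k! = λ∑_{j≥0}λ^j e^{-λ}/j! = λ$, and similarly $𝔼[k(k-1)]=λ^2$, so $\Var[k]=𝔼[k(k-1)]+𝔼[k]-𝔼[k]^2=λ$.

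Next, for unimodality in $λ$ with maximizer $λ^*=k$, I would differentiate the pmf with respect to $λ$ (with $k$ fixed): the result is $∂_λ g_k(λ) = λ^{k-1}e^{-λ}(k-λ)/k!$. For $k≥1$ this is strictly positive on $(0,k)$, zero at $λ=k$, and strictly negative on $(k,∞)$, establishing both unimodality and the location of the maximum. The degenerate case $k=0$ is handled by hand: $g_0(λ)=e^{-λ}$ is decreasing and maximized at $λ^*=0=k$.

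For the value of the maximum, I would substitute to get $P_k(k)=k^k e^{-k}/k!$ and then invoke the sharp form of Stirling already cited as \eqref{eq:stirling}, which gives $k! = \sqrt{2πk}\,(k/e)^k e^{r_k}$ with $0≤r_k→0$ (Robbins-style bounds). This immediately yields $P_k(k)=e^{-r_k}/\sqrt{2πk}=:(1-\dot{ε}_k)/\sqrt{2πk}$ with $\dot{ε}_k:=1-e^{-r_k}\in[0,1)$ and $\dot{ε}_k→0$. The only subtle point, and the step I would double-check, is the \emph{sign} of $\dot{ε}_k$: this hinges on the fact that the naive Stirling expression $\sqrt{2πk}(k/e)^k$ \emph{under}-estimates $k!$ for every $k≥1$, which is exactly what the form of \eqref{eq:stirling} is supposed to supply.
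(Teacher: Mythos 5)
Your proposal is correct and follows exactly the route the paper (implicitly) intends: the lemma is stated without a separate proof, and its quantitative part is by definition the Stirling bound \eqref{eq:stirling}, which asserts $k^k e^{-k}/k! = (1-\dot{ε}_k)/\sqrt{2πk}$ with $e^{-1/12k}≤1-\dot{ε}_k≤e^{-1/(12k+1)}$, i.e.\ precisely the Robbins-type under-estimate you flag as the one point to double-check. The moment computation and the sign analysis of $∂_λ g_k(λ)=λ^{k-1}e^{-λ}(k-λ)/k!$ are the standard arguments and are carried out correctly, including the degenerate case $k=0$.
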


With a slight overload in notation, 
let $P_\vl(\v n):=∏_{x=1}^d λ_x^{n_x}e^{-λ_x}/n_x!$ for $\v n∈ℕ_0^d$ be a product of $d$ independent 
but not identical Poissons, where $\v n≡n_{1:d}$. 
It is well-known that $P_\vt(n_{1:d})=P_\vl[N_{1:d}=n_{1:d}|N=n]$ for $\vl=n\vt$ and $N:=N_+=N_1+...+N_d$.
The mean and variance of Poisson($λ$) are both $λ$,
hence $𝔼_\vl[N]=∑_x 𝔼_\vl[N_x]=∑_x λ_x=n$ and 
similarly $\Var_\vl[N]=n$ (using independence).
This means that $N=n±O_P(\sqrt{n})$ is close to $n$ for large $n$,
and therefore under certain conditions,
$P_\vt[N_x=k]≈P_\vl[N_x=k]$ even without conditioning on $N=n$.
For instance, $𝔼_\vt[N_x]=nθ_x=λ_x=𝔼_\vl[N_x]$ and 
$\Var_\vt[N_x]=nθ_x(1-θ_x)≤nθ_x=λ_x=\Var_\vl[N_x]$.
Unfortunately, for the events $R$ we care about,
it is extremely cumbersome to quantify the relation $P_\vt(R)≈P_\vl(R)$.
We will do so in \Cref{secm:Lemmas}.
In the main \Cref{sec:iidtests} we adopt a simpler approach:
Noting that $N$ is itself Poisson($n$) distributed,
\begin{align}\label{eq:cn}
  P_\vt(\v n) ~&=~ P_\vl(\v n)/P_\vl[N=n] ~=~ c_n⋅P_\vl(\v n) \\ \nonumber
  P_\vl[N=n] ~&=~ P_n(n) ~=~ n^n e^{-n}/n! ~=:~ 1/c_n ~\simequd~ 1/\sqrt{2πn} 
\end{align}
Hence, for any event $R$, we can upper $P_\vt[R]≤c_n⋅P_\vl[R]$.
For our tests, $P[R]$ is typically exponentially small in $n$,
so the blow-up by $c_n=O(\sqrt{n})$ is insignificant in theory:
Increasing sample size $n$ to $n+O(\log n)\simequd n$ cancels $c_n$.
We therefore can and will treat the item counts $N_1,...,N_d$ 
as independent Poisson distributed $N_x\sim P_{λ_x}$, 
i.e.\ $\v N≡N_{1:d}\sim P_\vl$,
which greatly facilitates the developments of our tests.
In \Cref{secm:iidtests,secm:Lemmas} we show that $c_n$ can directly be replaced by $1$ in many cases of practical interest.

\paragraph{Second-order count multiplicity.} 
Let $m_k:=\#\{x:n_x=k\}$ be the number of $x$ that appear $k$ times in $x_{1:n}$,
called (second-order) \emph{count multiplicities}, and $M_k:=\#\{x:N_x=k\}$ be the corresponding random variables.
Note that $M_k=0$ for $k>N$ but $M_k=0$ also for many $k≤N$
due to $∑_{k=0}^∞ k⋅M_k=N$ and $∑_{k=0}^∞ M_k=d$.
Let $M_+:=∑_{k=1}^∞ M_k=\#\{x:N_x>0\}=\#\{X_1,...,X_n\}$ be the number of different $X_t$, not counting multiplicities.
We are mostly interested in $d=∞$, in which case $M_0=∞$ is not a useful statistic.
We therefore exclude $M_0$ in $\v M:=M_{1:n}$.

\paragraph{Exchangeable distributions.}
Non-iid distributions will be denoted by $Q$.
A distribution $Q$ is exchangeable if it is invariant under permutations of $x_{1:n}$,
i.e.\ $Q(x_{1:n})=Q(x_{π(1:n)})$, where $π∈S_n$ is any permutation of $1:n$.
As in the iid-case, $Q$ only depends on the counts $\v n$.
Let $𝓠$ be the class of all exchangeable distributions $Q$.

For instance, for $d=2$, Laplace's rule $Q(x_{t+1}|x_{1:t})=(\#\{τ≤t:x_τ=x_{t+1}\}+1)/(t+2)$
has exchangeable but non-iid distribution $Q(x_{1:n})=n_1!n_2!/(n+1)!$.
Similarly for the Good-Turing and Ristad distributions \citep{Hutter:18off2onx}.

Exchangeable distributions occur naturally as follows:
Assume $X'_{1:n}$ are drawn from an arbitrary distribution $Q'$,
and then perfectly shuffled such as to destroy any order information.
Formally, $X_{1:n}=X'_{Π(1:n)}$, where $Π$ is drawn uniformly from all permutations $S_n$.
It is easy to see that $X_{1:n}$ are exchangeable random variables.
In particular exchangeable $X_1,...,X_n$ are identically distributed, i.e.\ $Q[X_t=x]=Q[X_{t'}=x]$.

\paragraph{Invariant statistical tests.} 
A (valid) statistical test of significance $0<α<1$ is a reject region $R⊂𝓧^n$ such that $P_\vt[R]≤α~∀\vt$.
We can reject the hypothesis $\Hiid$ that $\v x≡x_{1:n}$ is iid with confidence $1-α$ \emph{iff} $\v x∈R$,
that is, $\Hiid$ is falsely rejected (Type~I error) with probability at most $α$.
Reject regions are most often defined via a test statistic $T:𝓧^n→ℝ$ 
and $R=\{\v x: T(\v x)>c\}$ for some critical value $c∈ℝ$.
$T$ at critical level $c_α:=\inf\{c:\sup_\vt P_\vt[T(\v X)>c]≤α\}$ has significance $α$.
The $p$-value of a test $T$ for data $\v x$ is $p:=\sup_\vt P_\vt[T(\v X)>T(\v x)]$
is the smallest level $α$ at which we can reject $\Hiid$:
$T$ can reject $\Hiid$ with confidence $1-p$.

Since we assume $X_{1:n}$ are exchangeable (shuffled), 
it is natural to ask for a test to reject $\Hiid$ independently of the order in which $X_1,...,X_n$ are presented.
That is, $T$ should be a function of the item counts $N_{1:d}$ only.

Furthermore, we do not want to make any structural assumptions on $𝓧$.
While each $Q∈𝓠$ is \emph{not} necessarily invariant under permutations of elements of $𝓧$, 
the class $𝓠$ itself is. Since we want to test against all $𝓠$,
it is natural to consider tests that are not affected by permuting $𝓧$,
that is, $T(X_{1:n})=T(π(X_1),...,π(X_n))$, where this $π$ is any permutation of elements in $𝓧$.
Combining both invariances, we must have $T(X_{1:n})=T(N_{1:d})=T(N_{π(1:d)})$.
$T$ is invariant under reordering of $N_{1:d}$ \emph{iff} it only depends on $M_0,...,M_n$.
It may be possible to make an argument for order-independent tests
that among the most powerful tests w.r.t.\ to some invariant sub-class of $𝓠$ 
there is always an invariant test, i.e.\ they include all minimax optimal tests.

\begin{definition}[\bfm Invariant tests $T$]\label{prop:invariant}
We call tests $T:𝓧^n→ℝ$ that are invariant under permutations of the argument $x_1,...,x_n$ 
as well as invariant under permutations of the elements in $𝓧$,
invariant tests. Invariant tests are functions of $M_0,...,M_n$ only.
\end{definition}

\paragraph{The power of tests.}
Neyman-Pearson use alternative hypotheses to determine the 
power $β=Q[T>c]$ of a test $T$ for $Q$ ($1-β=$Type~II error).
In our case, the alternative hypothesis is the set of exchangeable distributions without the iid distributions $\Halt:=𝓠\setminus\Hiid$.
There are no uniformly most powerful (UMP) tests for $\Halt$, not even close; $\Halt$ is too broad.
Each test will have high power for some subset of $𝓠$ and low power for other $Q∈𝓠$.
We do not formally define ``interesting'' subsets of $𝓠$ 
and derive the power of tests for them or find UMPs for these subsets. 
We focus on developing tests which have known small (upper bound on the) Type~I error $α$
= small size $α$ = significance level $α$ = probability of falsely rejecting $\Hiid$ when it is actually true.
We therefore rarely mention $𝓠$, so unless explicitly mentioned to the contrary, 
distributions and sampling refers to iid or multinomial or Binomial.
Our work is closer in spirit to Fisher hypothesis testing without alternative hypothesis,
but we do demonstrate the power of the tests empirically in \Cref{sec:exp} on some hand-selected $Q$.

\section{Reducing general $𝓧$ to $ℕ$}\label{sec:XtoN}

In this section we discuss general probability spaces $(𝓧^n,Σ,P)$, 
only to discover that we can without loss of generality restrict our analysis 
to $𝓧=ℕ$ and $𝓧=\{1:d\}$ with discrete $σ$-algebra $Σ=2^𝓧$.
The only assumption we have to make on $Σ$ is that it contains all singletons,
$\{\v x\}∈Σ~∀\v x∈𝓧^n$, in order for the events $\v X=\v x$ to be measurable.
For example, every T1 or Hausdorff space provided with the Borel sets satisfies this,
in particular $ℝ^n$.

\paragraph{Infinite $𝓧$.}
So far we have considered finite and countable $𝓧$.
Consider now $𝓧^n=ℝ^n$ with joint Gaussian density $ρ=\text{Gauss}(\v 0,\v{Ξ})$.
The probability that $x_{1:n}$ contains repetitions is zero,
hence $x_1,...,x_n$ are all different, so $M_0=∞$, $M_1=n$, $M_k=0~∀k≥2$.
Since an invariant test $T$ is a function of $M_{0:n}$ only,
there is a constant $c'$ such that almost surely $T(M_{0:n})=c'$,
hence $Q_ρ[T>c]$ is identically 0 or identically 1, i.e.\ the same for all $\v{Ξ}$. 
It cannot be 1, since $T$ must satisfy $Q_ρ[T>c]≤α<1$ for iid $Q_ρ$ ($Ξ_{tt'}\propto⟦t=t'⟧$),
but then $T$ never rejects $\Hiid$, even if $ρ$ is non-iid and maximally correlated ($Ξ_{tt'}=1~∀tt'$).
In general, any uncountable $𝓧$ can be equipped with a $σ$-algebra and non-atomic measure,
leading to the same conclusion.
Now consider $𝓧=εℤ$ and discretize $ρ$. 
For $ε→0$ the conclusion still holds.
Formally, for every $δ>0$ there exists an $ε>0$,
such that all $x_1,...,x_n$ are different with probability at least $1-δ$.
Hence the conclusion also holds for countably infinite $𝓧$.

\begin{proposition}[\bfm All tests are powerless against densities]\label{prop:dense}
If $𝓧$ is infinite and all $x_1,...,x_n$ are different,
no valid invariant test can reject $\Hiid$. In particular,
$X_{1:n}$ are almost surely all different if sampled from a non-atomic measure,
e.g.\ if the measure has a density w.r.t.\ to the Lebesgue measure on $𝓧=ℝ^d$.
\end{proposition}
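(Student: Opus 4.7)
My plan is to show that an invariant test is forced to be constant on the event ``all $x_i$ distinct'', and then to use validity against a carefully chosen iid distribution to pin down that constant on the non-reject side.

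First, I would unpack the hypothesis ``all $x_1,\dots,x_n$ different'': this forces $M_1=n$ and $M_k=0$ for every $k\ge 2$, while $M_0$ is determined by $\mathcal X$ (infinite, or $d-n$). Hence on this event the whole tuple $M_{0:n}$ is constant. By Definition~\ref{prop:invariant} any invariant test $T$ is a function of $M_{0:n}$ alone, so $T$ takes a single value $c'\in\mathbb R$ on the event $E:=\{\text{all }X_i\text{ distinct}\}$. In particular the indicator of rejection, $\mathbf 1[T>c]$, is constant on $E$: either identically $1$ or identically $0$.

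Next, I would rule out the ``identically $1$'' option by choosing an iid distribution under which $E$ has probability close to one. For $\mathcal X=\mathbb R^d$ (or any uncountable $\mathcal X$ carrying a non-atomic probability measure) I take $P_{\vt}$ to be iid with any density $\rho$; then $P_{\vt}[X_i=X_j]=0$ for $i\ne j$, and a union bound over the $\binom{n}{2}$ pairs gives $P_{\vt}[E]=1$. Validity of the test then forces
\begin{equation*}
  \mathbf 1[c'>c]\;=\;P_{\vt}[T>c]\;\le\;\alpha\;<\;1,
\end{equation*}
so $c'\le c$ and the test never rejects on $E$. For a countably infinite $\mathcal X$ I would run a limiting version of the same argument: take $P_{\vt_N}$ iid uniform on some $N$-element subset $\{y_1,\dots,y_N\}\subseteq\mathcal X$, note that $P_{\vt_N}[E]\to 1$ as $N\to\infty$ by the birthday bound $1-\binom{n}{2}/N$, and combine with $P_{\vt_N}[T>c,\,E]=\mathbf 1[c'>c]\cdot P_{\vt_N}[E]\le\alpha$ to again conclude $c'\le c$.

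Finally, the ``in particular'' clause is exactly the union-bound observation used above, applied to any non-atomic $P$ on $\mathcal X=\mathbb R^d$ (or more generally on any space where singletons are measurable and have measure zero). The main subtlety, and the only place where a little care is required, is the countable case: since no single iid distribution on a countable set can put all its mass on $E$, one must pass to a sequence of iid distributions whose ``all distinct'' probability tends to one, exploiting that the constant $c'$ does not depend on the distribution at all. Once that is in hand, the conclusion for arbitrary infinite $\mathcal X$ follows uniformly.
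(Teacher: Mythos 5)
Your proposal is correct and follows essentially the same route as the paper: observe that the all-distinct event pins down $M_{0:n}$, hence any invariant $T$ is constant there, then invoke validity against an iid non-atomic distribution (probability one of all-distinct) for uncountable $𝓧$, and against a sequence of iid distributions with all-distinct probability tending to one for countably infinite $𝓧$. The only cosmetic difference is the choice of witness distributions (the paper uses a Gaussian and its discretization on $ε\mathbb{Z}$, you use an arbitrary density and uniform distributions on $N$-point subsets); the limiting argument for the countable case, which you correctly flag as the one delicate step, is the same in both.
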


\paragraph{\boldmath Reduction of $𝓧$ to $ℝ$.} 
Let $𝓧_{pp}:=\{x∈𝓧: P[\{x\}]>0\}$, which is countable, $β:=P[𝓧_{pp}]$,
then $P_{pp}[A]:=P[A|𝓧_{pp}]$ is a pure point measure and 
$P_{\overline{pp}}[A]:=P[A|𝓧\setminus𝓧_{pp}]$ is non-atomic:
\begin{align*}
  P[A] ~=~ β⋅P_{pp}[A] ~+~ (1-β)⋅P_{\overline{pp}}[A]
\end{align*}
i.e.\ every measure $P$ can be decomposed into a pure point measure 
and a non-atomic rest.

Consider now a point measure $\tilde P_{pp}$ on $ℝ$ 
with $\tilde P_{pp}[\{2i\}]:=P_{pp}[\{y_i\}]$ and zero elsewhere, 
where $𝓧_{pp}=\{y_1,y_2,...\}$ is some enumeration of elements in $𝓧_{pp}$. Now define
\begin{align}\label{eq:tildeP}
  \tilde P[A] ~=~ β⋅\tilde P_{pp}[A] ~+~ (1-β)⋅\text{Gauss}_{0,1}[A]
\end{align}
As far as the second-order counts $\v M:=(M_1,...,M_n)$ and $M_0=∞$ are concerned,
$\tilde P[\v M=\v m]=P[\v M=\v m]$, since for the discrete part we bijected $𝓧_{pp}$ to (a subset of) $2ℕ$
with same probability mass, and $\v M$ is invariant under such bijection.
As for the non-atomic part, in both cases, we almost surely each time sample a novel $x$ 
not seen before, i.e.\ only $M_1$ is affected and increases by 1 with probability $1-β$.
That is, we can restrict ourselves to measures on $ℝ$ of the form \eqref{eq:tildeP}:

\begin{proposition}[\bfm $𝓧=ℝ$ suffices]\label{prop:xrsuff}
For every invariant test $T$, \\
$P[T>c]≤α$ for iid $P$ on $𝓧$ $\Longleftrightarrow$
$\tilde P[T>c]≤α$ for iid $\tilde P$ on $ℝ$ of the form \eqref{eq:tildeP}.
\end{proposition}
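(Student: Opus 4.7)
The plan is to prove the proposition by constructing, for each iid $P$ on $\mathcal{X}$, a corresponding $\tilde P$ of the form \eqref{eq:tildeP} together with a coupling of the two sample vectors $\v X\sim P^{\otimes n}$ and $\v{\tilde X}\sim \tilde P^{\otimes n}$ on a common probability space under which the count-multiplicity vector $\v M = M_{1:n}$ coincides almost surely. Since by Definition~\ref{prop:invariant} an invariant test $T$ depends only on $\v M$, this forces $P[T>c] = \tilde P[T>c]$ for every critical value $c$, which yields both directions of the equivalence simultaneously (for the $\Leftarrow$ direction, every $\tilde P$ of the form \eqref{eq:tildeP} is itself a valid iid distribution on $\mathcal{X}=\mathbb{R}$, so the same identity applies).

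First I would use the decomposition $P=\beta P_{pp}+(1-\beta)P_{\overline{pp}}$ from the paragraph preceding the statement to write the sample as a mixture: draw iid selectors $I_1,\dots,I_n\sim\mathrm{Bernoulli}(\beta)$, and set $X_t\sim P_{pp}$ conditional on $I_t=1$ and $X_t\sim P_{\overline{pp}}$ conditional on $I_t=0$. On the same probability space I would construct $\tilde X_t$ as follows: when $I_t=1$ and $X_t=y_i$ (using the enumeration $\mathcal{X}_{pp}=\{y_1,y_2,\dots\}$), put $\tilde X_t:=2i$; when $I_t=0$, sample $\tilde X_t$ from $\text{Gauss}(0,1)$ independently of $X_t$. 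Because $y_i\leftrightarrow 2i$ is a measure-preserving bijection between $P_{pp}$ and $\tilde P_{pp}$ and the continuous draws on both sides have the prescribed marginals, the joint laws of $\v X$ and $\v{\tilde X}$ under this coupling are indeed $P^{\otimes n}$ and $\tilde P^{\otimes n}$.

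The almost-sure identity $\v M(\v X)=\v M(\v{\tilde X})$ then reduces to two disjointness facts that hold on a single full-measure event: (i) under $P$, samples from the non-atomic $P_{\overline{pp}}$ are pairwise distinct and disjoint from the countable set $\mathcal{X}_{pp}$; (ii) under $\tilde P$, Gaussian samples are pairwise distinct and disjoint from the countable set $2\mathbb{N}$. Both are standard countable-union-of-null-sets arguments. Consequently, each index with $I_t=0$ contributes exactly one fresh singleton to $M_1$ on both sides, and $0$ to every $M_k$ with $k\ge 2$; while among indices with $I_t=1$ the bijection gives $X_t=X_s\Leftrightarrow \tilde X_t=\tilde X_s$, so the atomic observations generate identical second-order counts. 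Combining the two cases, $\v M(\v X)=\v M(\v{\tilde X})$ a.s., and the proposition follows. The only delicate point I expect is arranging the simultaneous a.s.\ event on which all the freshness/disjointness conditions hold together on both sides of the coupling, which is routine bookkeeping once the explicit embedding $\mathcal{X}_{pp}\to 2\mathbb{N}$ is fixed so that the Gaussian mass a.s.\ avoids the atomic support.
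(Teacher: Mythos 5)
Your proposal is correct and follows essentially the same route as the paper: the paper's argument (given in the paragraph preceding the proposition) uses exactly the decomposition $P=\beta P_{pp}+(1-\beta)P_{\overline{pp}}$, the bijection $y_i\mapsto 2i$ for the atoms, and the observation that the non-atomic part a.s.\ contributes only fresh singletons to $M_1$; your explicit coupling is just a more formal way of establishing the same distributional identity $\tilde P[\v M=\v m]=P[\v M=\v m]$. The only cosmetic remark is that invariant tests may also depend on $M_0$ (Definition~\ref{prop:invariant}), which is handled because $M_0=\infty$ on both sides for infinite $𝓧$ (the finite case is treated separately in the paper).
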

Note that $T$ only depends on the counts $\v M$ and $M_0$, 
so the same $T$ is defined across every $𝓧$,
and gives the same result independent from which infinite space $\{x_1,...,x_n\}⊆𝓧$ came from.

\paragraph{Reduction of $ℝ$ to $ℕ$.} 
Consider $\tilde P_\ell[\{2i-1\}]:=1/\ell$ for $i=1,...,\ell$ and 0 on $ℝ$ else.
Assume we draw $n$ iid samples from $\tilde P_\ell$. 
The probability of sampling some $x$ twice is 
\begin{align}\label{eq:once}
  \tilde P_\ell[∃t≠t':X_t=X_{t'}] ~≤~ ∑_{t≠t'}\tilde P_\ell[X_t=X_{t'}] ~=~ ∑_{t≠t'}\frac1\ell ~≤~ \frac{n²}\ell
\end{align}
That is, for fixed $n$, the probability for all $x$ being unique ($P_\ell[\v M=(n,0,0,...)]$) tends to 1 for $\ell→∞$.
Combining this with the point measure above implies for all $\v m:=m_{1:n}$,
\begin{align}\label{eq:ell}
  β⋅\tilde P_{pp}(\v m) + (1-β)⋅\tilde P_\ell(\v m) ~~~\longrightarrow~~~ \tilde P(\v m) ~~~\text{for}~~~ \ell→∞
\end{align}

\begin{proposition}[\bfm $𝓧=ℕ$ suffices]\label{prop:xnsuff}
For every invariant test $T$ and infinite $𝓧$, \\ 
$P[T>c]≤α$ for all iid $P$ on $𝓧$ ~~~$\Longleftrightarrow$~~~
$\tilde P[T>c]≤α$ for all iid $\tilde P$ on $ℕ$.
\end{proposition}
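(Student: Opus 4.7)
The plan is to combine \Cref{prop:xrsuff} with a discrete approximation argument: the former already reduces arbitrary infinite $𝓧$ to iid measures on $ℝ$ of the special form \eqref{eq:tildeP}, and what remains is to replace the Gaussian component by the sequence of discrete measures $\tilde P_\ell$ introduced just before \eqref{eq:ell}. The ``$\Rightarrow$'' direction is the easy one: given LHS on some infinite $𝓧$, any iid measure $\tilde P$ on $ℕ$ can be pushed forward along an injection $ℕ\hookrightarrow 𝓧$ (which exists since $𝓧$ is infinite) to an iid measure on $𝓧$ inducing the same law on $\v M$, hence the same distribution of $T$; so LHS immediately yields RHS.

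For ``$\Leftarrow$'', I assume $\tilde P[T>c]\le\alpha$ for every iid $\tilde P$ on $ℕ$ and aim to deduce the same bound for every iid $P$ on $𝓧$. By \Cref{prop:xrsuff} it suffices to handle iid $\tilde P$ on $ℝ$ of form \eqref{eq:tildeP}, parametrized by $\beta\in[0,1]$ and pure-point part $\tilde P_{pp}$ supported on $2ℕ$. I then define the approximating measure $\tilde P^{(\ell)}:=\beta\tilde P_{pp}+(1-\beta)\tilde P_\ell$, which is an iid measure supported on $ℕ$ (even integers from the point-mass part, odd integers from $\tilde P_\ell$). By hypothesis, $\tilde P^{(\ell)}[T>c]\le\alpha$ for every $\ell$.

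The final step is a passage to the limit $\ell\to\infty$. Because $T$ is invariant (\Cref{prop:invariant}), $\{T>c\}$ decomposes as a union over count profiles $\v m=m_{1:n}$, and only finitely many profiles satisfy the constraint $\sum_k k m_k\le n$. For each such $\v m$, relation \eqref{eq:ell} supplies pointwise convergence $\tilde P^{(\ell)}(\v m)\to\tilde P(\v m)$; summing over the finite reject region commutes with the limit and yields $\tilde P[T>c]=\lim_{\ell\to\infty}\tilde P^{(\ell)}[T>c]\le\alpha$, closing the argument. The main technical point is justifying \eqref{eq:ell} itself, which amounts to decomposing each $n$-sample according to its allocation between the pure-point and non-atomic components, invoking the union bound \eqref{eq:once} to conclude that the $\tilde P_\ell$-part almost surely (as $\ell\to\infty$) produces $n$ pairwise distinct novel values -- exactly mimicking $\text{Gauss}_{0,1}$. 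Disjointness of the supports of $\tilde P_{pp}$ and $\tilde P_\ell$ (even vs.\ odd integers) rules out cross-component coincidences, which is the only subtle point but is already implicit in the construction of \eqref{eq:tildeP}.
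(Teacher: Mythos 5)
Your proposal is correct and follows essentially the same route as the paper: reduce to measures of the form \eqref{eq:tildeP} via \Cref{prop:xrsuff}, replace the Gaussian component by $\tilde P_\ell$ supported on odd integers, invoke \eqref{eq:once} to argue the $\ell$-part almost surely yields $n$ distinct fresh values as $\ell\to\infty$, and pass to the limit. Your explicit justification of the limit interchange (the reject region is a finite union over count profiles with $\sum_k k\,m_k=n$, so pointwise convergence in \eqref{eq:ell} suffices) and the pushforward argument for the easy direction are details the paper leaves implicit, but the substance is identical.
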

This justifies our restriction to finite $𝓧=\{1:d\}$ and countable $𝓧=ℕ$ ($d=∞$).

\paragraph{Finite $𝓧$.}
\emph{Embedding finite $𝓧$ into infinite $𝓧$:}
Observing (only) $x_{1:n}$, we we do not want to make any assumption from which space $𝓧$
they have been sampled from. Obviously $|𝓧|≥M_+$ is needed.
But any $P$ on a finite domain, say $\{1:d'\}$, can be extended to infinite $𝓧$
by setting $P[𝓧\setminus \{1:d'\}]=0$ without affecting $\v M$, 
i.e.\ infinite $𝓧$ also contain all finitely supported $P$.
So if a test has confidence $1-α$ for $|𝓧|=∞$, then it also has confidence at least $1-α$ for $|𝓧|<∞$.
The converse however is not true:

\emph{Knowing $𝓧$ is finite and its size $d$:}
While $d=∞$ includes all measures that have finite support, i.e.\ $θ_x=0$ for all $x>d'$,
\emph{knowing} that $d$ is finite provides extra information,
so the $d=∞$ analysis does \emph{not} automatically include the case where $d$ is known and finite.
While the $d=∞$ tests remain valid for $d<∞$, 
stronger tests are possible for $d<∞$.
The reason is that $M_0$ depends on $|𝓧|$.

Example: Recall that no invariant test can reject $\Hiid$ if all $x_t$ are different ($M_1=n$),
but this relied on $𝓧$ being infinite.
On the other hand, if we know/assume $|𝓧|=n$, then the probability of seeing every $x∈𝓧$ exactly once is
\begin{align*}
  P_\vt[∀t≠t':X_t≠X_{t'}] ~=~ n!⋅∏_{x∈𝓧} θ_x ~≤~ n!⋅\max_\vt ∏_{x∈𝓧} θ_x 
  ~=~ n!∏_{x∈𝓧} \frac1n ~=~ \frac{n!}{n^n} ~≤~ \sqrt{n}⋅e^{1-n}
\end{align*}
This is extremely small for large $n$, so $\Hiid$ can be rejected with very high confidence.
In particular having observed $x_{1:n}$ and knowing nothing about $𝓧$,
we cannot choose $|𝓧|=d=\#\{x:N_x≥1\}≡M_+=∑_{k=1}^n M_k$, 
even if we were somehow able to deal with the fact that such $d$ would itself be random.

\emph{Approximating infinite $𝓧$ by finite $𝓧$:}
While we made the case for countably infinite $𝓧$, 
for fixed $n$ and the approximate results we aim at, we actually do not need to consider $d=∞$,
but $\ell=d=n^3$ suffices: 
We sort $x$ in decreasing order of $P[\{x\}]$, truncate $𝓧_{pp}$ to $\fr12 n^3$ elements
and choose $\ell=\fr12 n^3$ (see above). 
So any potential complications from $d=∞$ can easily be avoided,
but it turns out that mostly $d=∞$ is more convenient.

\section{I.I.D.\ Tests}\label{sec:iidtests}

We are finally in a position to develop some tests.
We first outline the common idea behind all tests developed in this section.
In \Cref{sec:moretests} we discuss alternative approaches.
We then derive a couple of tests that feel natural.
Although they follow a common theme, they are quite diverse in the sense that 
every test highlights a new or different feature or power or technical difficulty.
The most basic test uses a single $M_k$, 
all others are linear combinations thereof, 
except the last one, which is a logarithmic combination.
The even and odd tests $E$ and $O$ are global sums of $M_k$ over all even/odd $k$.
The slope test $D_k=M_k-M_{k-1}$ is a bit more difficult to derive but also allows for a lower bound test.
The curvature test $C_k=2M_k-M_{k-1}-M_{k+1}$ and its logarithmic version $\bar U_k=\ln(M_k^2/M_{k-1}M_{k+1})$ can be very strong. 
Also, while some tests require to use the empirical variance ($E,O,\bar U_k$),
for others a theoretical upper bound is possible ($D_k,C_k,M_k$) and better ($M_k$).
$P$, $𝔼$, $\Var$, $\W$ are w.r.t.\ the mixture of Poisson distributions $P_\vl$ \eqref{eq:cn},
which approximates $P_\vt$.

\paragraph{The general idea behind the tests.}
First note that the Poisson distribution $P_λ(k)=λ^k e^{-λ}/Γ(k+1)$ is smooth 
if we take the liberty of plugging in $k∈ℝ$ 
(see e.g.\ $𝔼[M_k]$ curve in Figures~\ref{fig:MEMPV1} left for ``{\sf uniform}'').
It has a unique maximum at $k=λ$ and is log-concave, so a rather benign function.
For large $λ$, $P_λ(k)$ is also ``smooth'' in $k∈ℕ_0$ in the sense that its finite-difference approximations of slope and curvature (and higher) are small.
It still has a unique maximum at $k≈λ$ and is log-concave, and indeed approximately Gaussian with mean and variance $λ$.
$P_λ(k)$ is also differentiable in $λ$, which we will also exploit.
Now consider 
\begin{align}\label{eq:EMk}
  𝔼[M_k] ~&=~ \textstyle 𝔼[\#\{x:N_x=k\}] ~=~ 𝔼∑_x⟦N_x=k⟧ \\
  ~&=~ \textstyle ∑_x P_{\vl}[N_x=k] ~=~ ∑_x P_{λ_x}(k) ~=~ ∑_x g_k(λ_x) \nonumber
\end{align}
That is, $𝔼[M_k]$ is a sum of Poisson($λ_x$) distributions. 
Depending on the distribution of $λ_x$, $𝔼[M_k]$ as a function of $k$ may have multiple extrema,
but as a mixture of Poissons it cannot be less smooth and typically is even more smooth 
(see e.g.\ $𝔼[M_k]$ curve in Figures~\ref{fig:MEMPV1} left for ``{\sf linear}'' mixture).
Since $\bar M_k→𝔼[\bar M_k]$ for $n→∞$, $M_k$ as a function of $k$ will inherit any (lack of) structure in $𝔼[M_k]$, just with noise added.
Since invariant tests can only depend on $\v M$, they must test for some such property.
For instance, no Poisson and hence no mixture of Poissons can have $𝔼[M_k]=0$ for all odd $k$ 
(see e.g.\ Figures~\ref{fig:MEMPV1} left for ``{\sf even-n}''),
so $M_k=0$ for all odd $k$ is strong evidence against $\v X$ being iid.

\paragraph{Linear tests.} 
Most of our tests are (signed) linear combinations of a subset of the $M_k$. 
The general template for upper bounds on the mean and variance is as follows:

\begin{proposition}[\bfm Poisson upper bounds for linear tests]\label{prop:ublt} 
  Let $T=∑_k α_k M_k$ for $α_k∈ℝ$.
  Provided all involved sums and integrals are absolutely convergent, we have \ntc{\\}
  $τ:=𝔼[T]≤n⋅\sup_{λ>0} g(λ)/λ =:τ^{ub}$,
  where $g(λ):=∑_k α_k P_λ(k)=∑_k α_k λ^k e^{-λ}/k!$, and \ntc{\\}
  $\Var[T]≤∑_k α_k^2 𝔼[M_k]≤V^{ub}$, 
  where $V^{ub}:=∑_k α_k^2 μ_k^{ub}$ 
  with $μ_k^{ub}≥𝔼[M_k]$ upper bounding the expectations of $M_k$.
\end{proposition}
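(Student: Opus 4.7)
The plan is to prove both bounds by exploiting the product–Poisson structure, i.e.\ the fact that under $P_\vl$ the item counts $N_1,N_2,\ldots$ are independent.

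For the mean bound, I would start directly from \eqref{eq:EMk}, which already supplies $𝔼[M_k]=∑_x P_{λ_x}(k)$. Multiplying by $α_k$, summing over $k$, and interchanging the two summations (legitimate by the absolute convergence hypothesis), I get
\[
   τ \;=\; ∑_k α_k\,𝔼[M_k] \;=\; ∑_x ∑_k α_k P_{λ_x}(k) \;=\; ∑_x g(λ_x).
\]
Now I would factor out $λ_x$: writing $g(λ_x)=λ_x\cdot g(λ_x)/λ_x$ and using $∑_x λ_x = n$ yields
$τ \le \bigl(\sup_{λ>0} g(λ)/λ\bigr)\cdot ∑_x λ_x = n\sup_{λ>0} g(λ)/λ = τ^{ub}$.

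For the variance bound, the key move is to regroup $T$ by $x$ rather than by $k$. Although the $M_k$'s are \emph{not} independent across $k$, under $P_\vl$ the $N_x$'s are independent across $x$, so setting $Z_x := α_{N_x} = ∑_k α_k \llbracket N_x = k\rrbracket$ gives
\[
   T \;=\; ∑_k α_k ∑_x \llbracket N_x=k\rrbracket \;=\; ∑_x Z_x,
   \qquad
   \Var[T] \;=\; ∑_x \Var[Z_x].
\]
Because the indicators $(\llbracket N_x=k\rrbracket)_k$ are mutually orthogonal (at most one is nonzero), $Z_x^2 = ∑_k α_k^2 \llbracket N_x=k\rrbracket$, so $\Var[Z_x] \le 𝔼[Z_x^2] = ∑_k α_k^2 P_{λ_x}(k)$. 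Summing over $x$ and swapping the two sums once more,
\[
   \Var[T] \;\le\; ∑_x ∑_k α_k^2 P_{λ_x}(k) \;=\; ∑_k α_k^2\,𝔼[M_k] \;\le\; ∑_k α_k^2\,μ_k^{ub} \;=\; V^{ub},
\]
where the last inequality uses the pointwise bound $𝔼[M_k]\le μ_k^{ub}$.

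The main obstacle is not algebraic but bookkeeping: one has to keep in mind that for $d=∞$ both sums $∑_x$ and $∑_k$ are infinite, so each interchange (twice in each part) needs the absolute-convergence hypothesis stated in the proposition; and one must resist the temptation to bound $\Var[T]$ via $\Cov[M_k,M_{k'}]$ directly, since the $M_k$'s are correlated. Reindexing the sum by $x$ bypasses this entirely because independence lives at the level of the $N_x$'s.
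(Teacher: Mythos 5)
Your proposal is correct and follows essentially the same route as the paper: the mean bound is the interchange of sums plus the normalization trick $g(λ_x)=λ_x\cdot g(λ_x)/λ_x$ with $∑_xλ_x=n$ (the paper packages this as \Cref{lem:ubfe}), and the variance bound regroups by $x$, uses independence of the $N_x$ under $P_\vl$ and orthogonality of the indicators $⟦N_x=k⟧$ (the paper packages this as \Cref{lem:Zkxcor}a). The only detail you gloss over is the case $λ_x=0$ in the division step, which \Cref{lem:ubfe} handles explicitly but which is immaterial here.
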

Instead of upper bound $V^{ub}$ as defined above, 
for some of our tests we use random $V^{ub}=∑_k α_k^2 M_k$, 
which is an upper bound in expectation justified by \Cref{lem:bubtest}.

\begin{proof}
Using \eqref{eq:EMk} and \Cref{lem:ubfe} we can upper bound 
\begin{align*}
  \textstyle 𝔼[T] ~=~ ∑_k α_k ∑_x P_{λ_x}(k) ~=~ ∑_x g(λ_x) ~≤~ n⋅\sup_{λ>0}g(λ)/λ
\end{align*}
For the variance, for $Z_k^x:=⟦N_x=k⟧$, we have $Z_k^+=M_k$ and $Z_k^x Z_{k'}^x=0$.
Furthermore, $\Cov[Z_k^x,Z_{k'}^{x'}]=0$ for $x≠x'$ since $N_x$ and $N_{x'}$, hence $Z_k^x$ and $Z_{k'}^{x'}$ are independent.
Hence \Cref{lem:Zkxcor} implies
\begin{align*}
  \textstyle \Var[T] ~&=~ \Var[∑_k α_k Z_k^+] ~≤~ ∑_k α_k^2 𝔼[Z_k^+] \ntc{~=~ ∑_k α_k^2 𝔼[M_k]} ~≤~ ∑_k α_k^2 μ_k^{ub} ~~~~~\qedeq
\end{align*}
\end{proof}\vspace{-2ex}

\begin{figure*}[tbh!] 
\begin{center}
\includegraphics[width=0.51\textwidth]{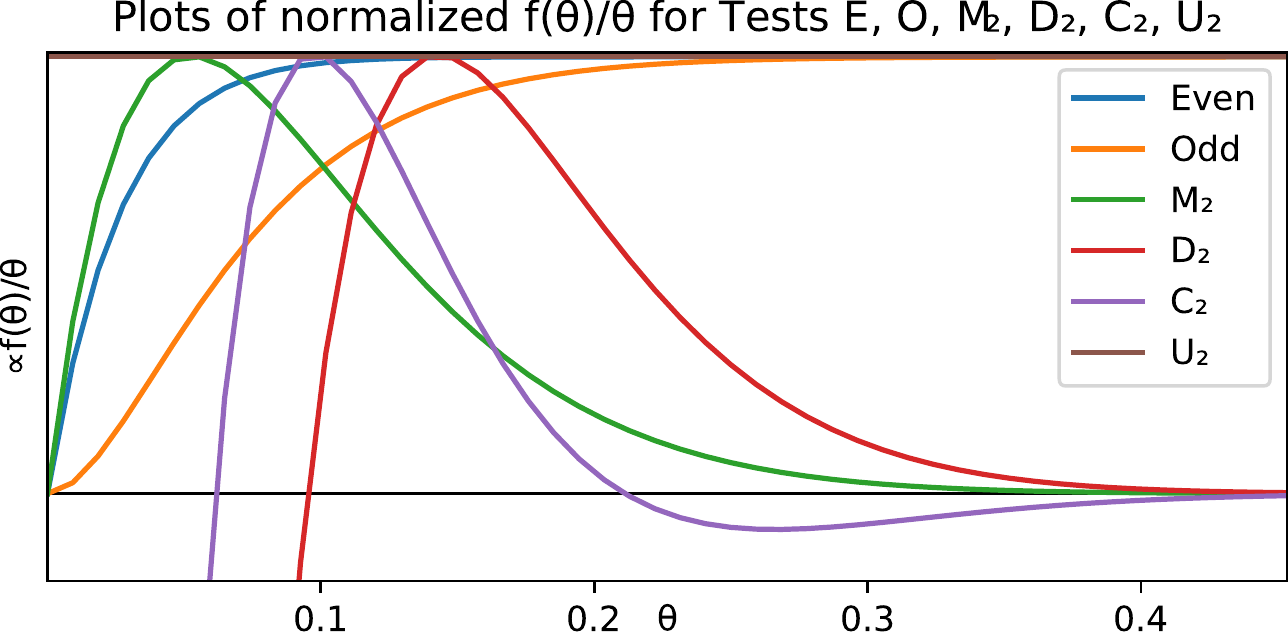}\vspace*{-2ex}
\caption{Normalized $g(λ)/λ≈f(θ)/θ$ for Tests $E,O,M_k,D_k,C_k,\bar U_k$ for $k=2$
}\label{fig:fT}\vspace*{-2ex}
\end{center}
\end{figure*}

\paragraph{\boldmath Second-order count tests $M_k$.} 
We first determine upper bounds for the second-order counts $M_k$ for each $k$ separately.
We can apply \Cref{prop:ublt} with $α_{k'}=⟦k'=k⟧$ or just directly apply \Cref{lem:ubfe}:
\begin{align}
  μ_k ~&:=~ 𝔼[M_k] ~=~ ∑_x g_k(λ_x) ~≤~ n⋅\sup_{λ>0}\frac{λ^{k-1} e^{-λ}}{k!} \nonumber\\
  ~&=~ n\frac{(k\!-\!1)^{k-1}e^{-(k-1)}}{k!} ~=~ \frac{n}{k}\frac{1-\dot{ε}_{k-1}}{\sqrt{2π(k\!-\!1)}} ~=:~ μ_k^{ub} \label{eq:Mkub}
\end{align}
The bound follows from $g_k(λ)/λ=λ^{k-1}e^{-λ}/k!$ being maximal for $λ^*=k-1$ (cf.\ Figure~\ref{fig:fT}).
The approximate expression follows from \Cref{lem:stirling} with $1/(12k+1)≤\dot{ε}_k≤1/12k$.
Note that for $k=1$ the bound is valid but vacuous.
We see that the relative frequency of $k$-multiplicities $\bar{μ}_k=μ_k/n$ is upper bounded by $O(k^{-3/2})$,
i.e.\ the expected \emph{number} of such items is $k⋅𝔼[M_k]=O(1/\sqrt{k})$.
From \Cref{lem:poisson} we know that for a single Poisson $\max_λ P_λ(k)≈1/\sqrt{2πk}$,
hence a mixture of Poisson($λ$)'s cannot be larger, which is consistent with the result above.

By \Cref{prop:ublt} 
we (also) have $\Var[M_k]≤𝔼[M_k]≤μ_k^{ub}$, 
so by \Cref{lem:bubtest} with $Z_x=⟦N_x=k⟧∈[0;1]$, the $p$-value for rejecting $\Hiid$ is
\begin{align*}
  p ~&\lesssim~ \textstyle Φ_n((μ_k^{ub}-M_k)/\sqrt{μ_k^{ub}}) \tc{\\}
  ~\tc{&}≤~ \exp(-\fr12 n(\bar M_k-\bar{μ}_k^{ub})^2/\bar{μ}_k^{ub}+O(1)) ~=~ e^{-O(n/k^{3/2})}
\end{align*}
We need to use $Φ_n(y):=c_n⋅Φ(y)$ (see \eqref{eq:cn} and \Cref{def:Gauss}) rather than $Φ$,
since in reality $M_k\sim P_\vt$ while $𝔼$ and $\Var$ were w.r.t.\ $P_\vl$,
so $p=P_\vt[T(\v M)>c]≤c_n⋅P_\vl[T(\v M)>c]\simequd c_n⋅Φ(\cdots)=:Φ_n(\cdots)$ for all of our tests $T$.
The $Φ_n$-bound only holds if $M_k>μ_k^{ub}$ and the exponential bound if furthermore $n$ is sufficiently large.
We could also have chosen $V^{ub}=M_k$, a random upper bound on $\Var[M_k]$,
but if $M_k<μ_k^{ub}$, then $p≥\fr12$, so the test has no power,
and if $M_k>μ_k^{ub}$, then using $μ_k^{ub}$ leads to a stronger test than using $M_k$.

\begin{example}
Our running example for all tests will be a data set 
where each data item is duplicated and appears exactly twice.
In this case, $M_2=n/2$ and all other $M_k=0$.
For $k=2$ we have $\bar{μ}_2^{ub}=1/2e\dot=0.184$ and $p\lesssim\exp(-\fr12 n(\fr12-\fr1{2e})^2/\frac1{2e})\dot=e^{-0.271n}$.
i.e.\ $\Hiid$ can be extremely confidently rejected for moderately large $n$.
For $k≠2$, the tests have no power ($\bar M_k=0<\bar{μ}_k^{ub}$).
\eoe\end{example}

\paragraph{\boldmath Even and odd tests $E$ and $O$.} 
The above example suggests non-trivial upper bounds on the even and odd second-order counts $M_k$,
and a test based on that, but we have to be a bit careful.
For $d=∞$, there are $d-n=∞$ many unobserved $x∈𝓧$, hence $M_0=∞$.
Similarly, for $λ_x=n/d$ and $d→∞$, every $x$ is observed exactly once, hence $M_1=n$ and all other $M_k=0$,
again not leading to a useful test. The general solution is to exclude $M_0$ and $M_1$.
First, for $α_k^\text{even}:=k⋅⟦k≠0~\text{even}⟧$ and $α_k^\text{odd}:=k⋅⟦k≠1~\text{odd}⟧$ we define
(see also Figure~\ref{fig:fT})
\begin{align*}
  g_\text{even}(λ) ~&:=~ ∑_k α_k^\text{even} P_λ(k) ~=~ ∑_{k≠0~\text{even}} k⋅P_λ(k) \tc{\\}
  ~\tc{&}=~ ∑_{k≠0~\text{even}}{λ^k e^{-λ}\over(k-1)!} ~=~ \fr{λ}2[1-e^{-2λ}] \\
  g_\text{odd}(λ) ~&=~ ∑_k α_k^\text{odd} P_λ(k) ~=~ ∑_{k≠1~\text{odd}} k⋅P_λ(k) \tc{\\}
  ~\tc{&}=~ ∑_{k≠1~\text{odd}}{λ^k e^{-λ}\over(k-1)!} ~=~ \fr{λ}2[1-e^{-λ}]^2
\end{align*}
The last expressions follow from pulling out a $λ e^{-λ}$ from the sum 
and recognizing the Taylor series expansion of $\sinh(λ)$ and $\cosh(λ)$.
The even and odd test statistics
\begin{align*}
  E ~&:=~ ∑_k α_k^\text{even} M_k ~=~ ∑_{k≠0~\text{even}} k⋅M_k \ntc{~~~\text{and}~~~}\tc{\\}
  O ~\tc{&}:=~ ∑_k α_k^\text{odd} M_k ~=~ ∑_{k≠1~\text{odd}} k⋅M_k. 
\end{align*}
Using \eqref{eq:EMk}, the expectation can be upper bounded by \Cref{prop:ublt} as 
\begin{align*}
  𝔼[E] ~&=~ ∑_{\nq k≠0~\text{even}} k ∑_x P_{λ_x}(k) ~=~ ∑_x g_\text{even}(λ_x) \tc{\\}
       ~\tc{&}≤~ n⋅\sup_{λ>0}\fr12[1-e^{-2λ}] ~=~ \frac{n}2 ~=:~ ε^{ub} ~=~ \bar{ε}^{ub}n \\ 
  𝔼[O] ~&=~ ∑_{\nq k≠0~\text{odd}} k ∑_x P_{λ_x}(k) ~=~ ∑_x g_\text{odd}(λ_x) \tc{\\}
       ~\tc{&}≤~ n⋅\sup_{λ>0}\fr12[1-e^{-λ}]^2 ~=~ \frac{n}2~=:~ ο^{ub} ~=~ \bar{ο}^{ub}n 
\end{align*}
That is, excluding singletons, we should expect at most half of the data items to appear evenly often, 
and at most half oddly often.
The even/odd upper bounds are ``attained'' for $λ^*→∞$.
Using \Cref{prop:ublt} again, we can 
also upper bound the variances of $E$ and $O$:
\begin{align*}
  \Var[E] ~&≤~ \ntc{∑_k (α_k^\text{even})^2~𝔼[M_k] ~=~} ∑_{\nq k≠0~\text{even}\nq} k^2 𝔼[M_k]
  ~~~\text{and}~~~ \Var[O] ~≤ ∑_{\nq k≠1~\text{odd}\nq} k^2 𝔼[M_k]
\end{align*}
Unfortunately no meaningful finite $\vl$-independent upper bounds on them are possible.
Trying to use the same method as for upper bounding $𝔼[E]$ leads again to $λ^*→∞$,
but this time the expression diverges. 
Note that for fixed $\vl$, the variance is finite; 
it just does not have a (non-vacuous) uniform upper bound.
So in this case we have to resort to using the empirical upper bound 
$V_\text{even}^{ub}:=∑_{k≠0~\text{even}}k^2 M_k$ for the variance,
and similarly for $V_\text{odd}^{ub}:=∑_{k≠1~\text{odd}}k^2 M_k$.
By \Cref{lem:bubtest} and \Cref{lem:ubmom}, 
the $p$-values for rejecting $\Hiid$ are
\begin{align*}
  p ~&\lesssim~ \textstyle Φ_n((ε^{ub}-E)/\sqrt{V_\text{even}^{ub}}) ~≤~ \exp(-\fr12 n(\bar E-\fr12)^2/\sqrt{\bar V_\text{even}^{ub}}+O(1)) \\
  p ~&\lesssim~ \textstyle Φ_n((ο^{ub}-O)/\sqrt{V_\text{odd}^{ub}}) ~≤~ \exp(-\fr12 n(\bar O-\fr12)^2/\sqrt{\bar V_\text{odd}^{ub}}+O(1))
\end{align*}
As before, the $Φ_n$-bound only holds if $E>ε^{ub}$ and $O>ο^{ub}$;
the exponential bound only holds if $\bar E>\fr12$ and $\bar O>\fr12$ and sufficiently large $n$.

\begin{example}
In our running example in which each data item is doubled ($M_2=n/2$),
we have $E=n\bar E=n$ and $V_\text{even}^{ub}=2n$, hence the even test has $p$-value $p\lesssim e^{-n/8\sqrt{2}}$,
i.e.\ $\Hiid$ can be extremely confidently rejected for moderately large $n$.
The odd test has no power ($O=0$). If we triple each item, then $O=n/3$ and $E=0$
and $V_{odd}^{ub}=3n$, and the odd test has $p\lesssim e^{-n/8\sqrt{3}}$.
\eoe\end{example}

\paragraph{\boldmath Slope tests $D_k:=M_k-M_{k-1}$.} 
As mentioned at the beginning of the section, $𝔼[M_k]$ as a function of $k$ is ``smooth'',
so it is natural to test for a small difference=slope $D_k:=M_k-M_{k-1}$.
Let $D_k^x=⟦N_x=k⟧-⟦N_x=k-1⟧$, hence $D_k=∑_x D_k^x$.
Then similar to before 
\begin{align}\nonumber
  δ_k ~:=~ 𝔼[D_k] ~&=~ ∑_x P_{λ_x}[N_x=k]-P_{λ_x}[N_x=k-1] \tc{\\}
  ~\tc{&}=~ ∑_x g_δ(λ_x) ~≤~ n⋅\sup_{λ>0}\frac{g_δ(λ)}{λ} =: n\bar{δ}_k^{ub} \\ \label{eq:gdelta}
  \ntc{\text{where}~~~} & g_δ(λ):=\frac{λ^k e^{-λ}}{k!}\left[1-\frac{k}{λ}\right]
\end{align}
The maximum of $P_λ(k)$ is at $λ=k$ but the bracket $[1-\frac{k}{λ}]$ kills this maximum,
moving it to $≈k+\sqrt{k}$ (Figure~\ref{fig:fT}). 
Since $g_δ(λ)≤0$ for $λ≤k$, we can assume $λ>k$, hence 
\begin{align*}
  \frac{d}{dλ}\ln\frac{g_δ(λ)}{λ} ~&=~ \frac{d}{dλ}[(k-1)\ln λ-λ-\ln k! +\ln(1-k/λ)] \tc{\\}
  ~\tc{&}=~ \frac{k-1}{λ}-1+\frac{k/λ^2}{1-k/λ} ~\stackrel!=~ 0
\end{align*}
Multiplying with $λ^2(1-k/λ)$ leads to a quadratic equation in $λ$ 
which has two solutions $λ_±^*=k-\fr12±\sqrt{k+\fr14}$, only $λ_+^*>k$ is valid, and is indeed the global maximum.
Note that $g_δ(λ)=0$ for $k≥2$ but not for $k=1$, so \Cref{lem:ubfe} and hence the following bound only applies for $k≥2$.
The reason is that $D_1$ involves $M_0$, but $M_0=∞$ for infinite $𝓧$, hence $D_1=-∞$ and as a test is vacuous.
A tedious calculation shows that 
\begin{align}\label{eq:Dkub}
  \bar{δ}_k^{ub} ~=~ \frac{g_δ(λ_+^*)}{λ_+^*} ~=~ \frac{1-\ddot{ε}_k}{k^2\sqrt{2πe}}, ~~~\text{where}~~~ \ddot{ε}_k=O(1/\sqrt{k})
\end{align}
That is, the slope of (a mixture of) Poissons is upper bounded by $\bar{δ}_k^{ub}=O(1/k^2)$.
This is smaller than $μ_k^{ub}$ by a factor of $1/\sqrt{ke}$, 
so can lead to a stronger test than test $M_k$, 
provided that indeed $M_k$ deviates from $M_{k-1}$ sufficiently.

By \Cref{prop:ublt}, the variance of $D_k$ can be upper bounded by $\Var[D_k]≤𝔼[M_k]+𝔼[M_{k-1}]=μ_k+μ_{k-1}$.
We can theoretically upper bound this by $V_k^{ub}=μ_k^{ub}+μ_{k-1}^{ub}$ or empirically estimate it by $V_k^{ub}=M_k+M_{k-1}$.
So by \Cref{lem:bubtest} with $Z_x=D_k^x∈[-1;1]$ the $p$-value for rejecting $\Hiid$ is
\begin{align*}
  p ~&\lesssim~ \textstyle Φ_n((δ_k^{ub}-D_k)/\sqrt{V_k^{ub}}) \tc{\\}
  ~\tc{&}≤~ \exp(-\fr12 n(\bar D_k-\bar{δ}_k^{ub})^2/\bar V_k^{ub}+O(1))~=~ e^{-O(n/k^{5/2})}
\end{align*}
The empirical choice for $V_k^{ub}$ can be smaller=better than the theoretical upper bound if the bound is loose,
but can also be larger=worse, 
since $M_k+M_{k-1}→𝔼[M_k+M_{k-1}]≤μ_k^{ub}+μ_{k-1}^{ub}$ is only guaranteed in the iid case,
but we precisely want to test for non-iid, in which case $M_k+M_{k-1}$ may be larger than $μ_k^{ub}+μ_{k-1}^{ub}$ even asymptotically.
In all of our experiments, the empirical choice $V_k^{ub}=M_k+M_{k-1}$ performed better.

\begin{example}
Testing our previous example where each data item is doubled ($D_2=M_2=n/2$),
for $k=2$ we have $λ_+^*=3$, hence $\bar{δ}_2^{ub}=f_2(λ_+^*)/λ_+^*=1/2e^3\dot=0.0249$ 
and $\bar{μ}_2^{ub}+\bar{μ}_1^{ub}=1/2e+1\dot=1.184 > \fr12=\bar M_2+\bar M_1$,
so in this case, using $\bar M_2+\bar M_1$ as $V_k^{ub}$ is indeed better,
and $p\lesssim\exp(-\fr12 n(\fr12-\fr1{2e})^2/\frac12)=e^{-0.2257n}$.
For $k≠2$, the $D_k$ tests have no power ($\bar M_k=0<\bar{μ}_k^{ub}$).
\eoe\end{example}
We can also lower bound $D_k$ by upper bounding $-D_k$.
The maximizing $λ^*$ is then $λ_-^*$ and $g_δ(λ_-^*)$ is the same (apart from a minus sign) to leading order in $k$,
and $p\lesssim\textstyle Φ_n((|δ_k^{ub}|+D_k)/\sqrt{V_k^{ub}})$.
In the example above, $D_3=-M_2$ would have (the same) power as $D_2$.

\paragraph{\boldmath Linear curvature tests $C_k:=2M_k-M_{k-1}-M_{k+1}$.} 
As apparent from the graphs, the curvature of a (mixture of) Poisson as a function of $k≥2$ is also bounded,
which gives us another test. Let
\begin{align*}
  g_γ(λ) ~&:=~ 2P_λ(k)-P_λ(k-1)-P_λ(k+1) \tc{\\}
  ~\tc{&}=~ \frac{λ^k e^{-λ}}{k!}\left[2-\frac{k}{λ}-\frac{λ}{k+1}\right]
\end{align*}
be the negative curvature of $P_λ$. As before, we need to maximize $g_γ(λ)/λ$ for $k≥2$.
Another tedious calculation shows 
\begin{align*}
  \frac{d}{dλ}\frac{g_γ(λ)}{λ} ~&=~...~=~ \frac{λ^{k-3}e^{-λ}}{k!}⋅\left(1-\frac{k}{λ}\right)\left[\Big(\frac{λ}{k+1}-1\Big)^2-3\right]
\end{align*}
This has 3 zeros with $λ^*=k$ corresponding to the unique maximum of $g_γ(λ)/λ$ (cf.\ Figure~\ref{fig:fT}). 
The other two are minima.
With $n⋅\bar C:=C_k:=2M_k-M_{k-1}-M_{k+1}$ being the empirical negative curvature,
we can upper bound its expectation as 
\begin{align}
  n\bar{γ} ~&:=~ 𝔼[C_k] ~=~ ∑_x g_γ(λ_x) ~≤~ n⋅\max_{λ>0}\frac{g_γ(λ)}{λ} ~=~ n\frac{g_γ(k)}{k} \nonumber\\
  ~&=~ \frac{n}{k(k+1)}\frac{k^k e^{-k}}{k!} = \frac{n(1-\dot{ε}_k)}{k(k+1)\sqrt{2πk}} ~=:~ n\bar{γ}_k^{ub} \label{eq:Ckub}
\end{align}
By \Cref{prop:ublt} we have 
\begin{align*}
  & \Var[C_k] ~≤~ 𝔼[4M_k+M_{k-1}+M_{k+1}] ~=~ 4μ_k+μ_{k-1}+μ_{k+1}\\
  &≤~ n\left[4\frac{1-\dot{ε}_k}{\sqrt{2πk}}+\frac{1-\dot{ε}_{k-1}}{\sqrt{2π(k-1)}}+\frac{1-\dot{ε}_{k+1}}{\sqrt{2π(k+1)}}\right]
  ~=:~ 6n\frac{1+\tilde{ε}_k}{\sqrt{2πk}}
\end{align*}
where $\tilde{ε}_k=O(1/\sqrt{k})$. 
Together by \Cref{lem:bubtest} with $Z_x=2⟦N_x=k⟧-⟦N_x={k-1}⟧-⟦N_x={k+1}⟧∈[-2;2]$,
\begin{align*}
  p ~&\lesssim~ \textstyle Φ_n(\sqrt{n}(\bar{γ}_k^{ub}-\bar C_k)/\sqrt{\bar V_k^{ub}}) \tc{\\}
  ~\tc{&}≤~ \exp(-\fr12 n(\bar C_k-\bar{γ}_k^{ub})^2/\bar V_k^{ub}+O(1))~=~ e^{-O(n/k^{7/2})}
\end{align*}
Similarly to the slope case, we can choose $\bar V_k^{ub}$ as $4\bar M_k+\bar M_{k-1}+\bar M_{k+1}$ or $4\bar{μ}_k^{ub}+\bar{μ}_{k-1}^{ub}+\bar{μ}_{k+1}^{ub}$.
In all of our experiments, the empirical choice performed better, 
but our running example below shows that the theoretical upper bounds can be better in certain circumstances.

\begin{example}
Continuing our previous example where each data item is doubled ($C_2=2M_2=n$),
for $k=2$ we have $\bar{γ}_2^{ub}=1/3e^2=0.0451$ 
and $4\bar{μ}_2^{ub}+\bar{μ}_1^{ub}+\bar{μ}_3^{ub}\dot=1.8260 < 2=\bar 4M_2+\bar M_1+\bar M_3$,
so in this case, using the former as $V_k^{ub}$ is slightly better,
and $p\lesssim\exp(-\fr12 n(1-0.0451)^2/1.8260)\dot=e^{-0.2497n}$.
For $k≠2$, the $C_k$ tests have no power.
\eoe\end{example}

\paragraph{\boldmath Logarithmic curvature tests $\bar U_k:=2\ln M_k-\ln M_{k-1}-\ln M_{k+1}$.} 
One weakness of the tests so far is that they rely on absolute moment bounds.
If all $x$ are equally likely, this is ok, but if $λ_x$ are diverse,
the Poisson mixture becomes wider and hence lower and hence
$𝔼[M_k]$ and its slope and curvature become smaller.
Since the upper bounds must include the worst-case when all $λ_x$ are the same,
the bounds become quite loose. We can fix this by normalizing the curvature by $𝔼[M_k]$.
The mathematics becomes somewhat tedious, 
but there is a more elegant alternative with a very similar effect.
We consider the negative curvature of $\ln M_k$:
\begin{align*}
  \bar U_k ~:=~ 2\ln\bar M_k-\ln\bar M_{k-1}-\ln\bar M_{k+1} \ntc{~=~ 2\ln M_k-\ln M_{k-1}-\ln M_{k+1}}
\end{align*}
This is scale invariant, i.e.\ if all $M_⋅$ in the vicinity of $k$ are scaled down by some factor $α$,
$\bar U_k$ stays unaffected, i.e.\ does not become smaller=weaker. 
Of course this is only useful if we can derive a good upper bound on its expectation.
Since $\bar U_k$ is non-linear we need some new approach:

Consider the function $g:ℝ^3→ℝ$ with $g(\v{\bar Z}):=\bar U_k$,
where $\v{\bar Z}:=(\bar M_k,\bar M_{k-1},\bar M_{k+1})^\trp$.
Noting that $\bar M_k$ concentrates around $\bar{μ}_k$ for large $n$,
we perform a second-order Taylor-series expansion of $g$ around $\v{\bar{ζ}}:=(\bar{μ}_k,\bar{μ}_{k-1},\bar{μ}_{k+1})$.
The CLT then implies $g(\v{\bar Z})≈\text{Gauss}(g(\v{\bar{ζ}}),\Var{[{\v{\bar Z}}^\trp ∇g(\v{\bar{ζ}})]})$.
Formally we use the multivariate delta method, \Cref{lem:mdelta}.
\begin{align*}
  \bar{υ}_k ~:=~ g(\v{\bar{ζ}}) ~=~ 2\ln\bar{μ}_k-\ln\bar{μ}_{k-1}-\ln\bar{μ}_{k+1} \ntc{~=~ 2\ln μ_k-\ln μ_{k-1}-\ln μ_{k+1}}
\end{align*}
Let us define some auxiliary probability distribution over $x$ 
solely for technical purposes without ascribing any meaning to it.
\begin{align*}
  & \tilde P_{\vl,k}[X=x] ~:=~ \frac{λ_x^k e^{-λ_x}/k!}{∑_x λ_x^k e^{-λ_x}/k!}, ~~~\text{then}~~~ \\
  & \frac{μ_{k+1}}{μ_k} ~=~ \frac{∑_x \frac{λ_x}{k+1}λ_x^k e^{-λ_x}/k!}{∑_x λ_x^k e^{-λ_x}/k!} ~=~ \frac{\tilde{𝔼}_{\vl,k}[λ_X]}{k+1} \\
  & \frac{μ_{k-1}}{μ_k} ~=~ \frac{∑_x \frac{k}{λ_x}λ_x^k e^{-λ_x}/k!}{∑_x λ_x^k e^{-λ_x}/k!} ~=~ k⋅\tilde{𝔼}_{\vl,k}\bigg[\frac1{λ_X}\bigg] ~≥~ \frac{k}{\tilde{𝔼}_{\vl,k}[λ_X]}
\end{align*}
where we applied Jensen's inequality in the last step to convex function $1/λ$.
Taking the product, the dependence on unknown $\vl$ cancels out:
\begin{align}\label{eq:Ukub}
  \bar{υ}_k ~=~ \ln\frac{μ_k}{μ_{k-1}}\frac{μ_k}{μ_{k+1}} ~≤~ \ln\frac{k+1}{k} =: \bar{υ}_k^{ub} ~≤~ \frac1k 
\end{align}
Jensen's inequality for $1/λ$ is sharp \emph{iff} $\tilde P_{\vl,k}$ is a Dirac measure.
That is, as before, the bound is attained when all $λ^*=λ_x$ are the same,
but unlike before, the maximum is flat and attained for \emph{any} $λ^*$, 
even far away from $k$.
Indeed, the log-curvature $\bar{υ}_k$ is the same, namely $\bar{υ}_k^{ub}=\ln\fr{k+1}{k}$, for all $λ^*$.
As for the variance,
\begin{align*}
  \Var[{\v{\bar Z}}^\trp ∇g(\v{\bar{ζ}})] ~&=~ \Var\left[ 
  \scriptsize\left(\!\!\begin{array}{c}\bar M_k~~ \\ \bar M_{k-1} \\ \bar M_{k+1}\end{array}\!\!\!\right)^{\!\!\trp}\!
  \left(\!\!\!\begin{array}{c}2/\bar{μ}_k \\ -1/\bar{μ}_{k-1} \\ -1/\bar{μ}_{k+1}\end{array}\!\!\!\right)\right] \tc{\\}
  ~\tc{&}=~ \Var\left[2\frac{M_k}{μ_k}-\frac{M_{k-1}}{μ_{k-1}}-\frac{M_{k+1}}{μ_{k+1}}\right] 
\end{align*}
With $Z_k^x:=⟦N_x=k⟧$ and $α_k:=2/μ_k$ and $α_{k±1}:=-μ_{k±1}$ and all other $α_{k'}:=0$,
we have $Z_k^+=M_k$, hence 
\begin{align*}
  \Var[{\v{\bar Z}}^\trp ∇g(\v{\bar{ζ}})] ~&=~ \Var[∑_x α_k Z_k^+] ~≤~ ∑_k α_k^2 𝔼[Z_k^+] \tc{\\}
  ~\tc{&}=~ \frac1{μ_{k-1}} + \frac4{μ_k} + \frac1{μ_{k+1}}
\end{align*}
where we used \Cref{lem:Zkxcor}a in the inequality and $𝔼[Z_k^+]=𝔼[M_k]=μ_k$ in the last equality.

We need an approximation or upper bound on this, but we only have lower bounds on $1/μ_k$.
We solve this problem by replacing $μ_k$ with their empirical estimates $M_k=μ_k±O_P(\sqrt{n})$.
This is the second time we are forced to use the empirical estimate to upper bound the variance,
but for a slightly different reason than for $E$ and $O$.
Using \Cref{lem:mdelta} with $\v Z_x:=(Z_k^x,Z_{k-1}^x,Z_{k+1}^x)∈\{0,1\}^3$,
the $p$-value for the logarithmic curvature test is 
\begin{align*}
  p ~&\lesssim~ Φ_n\left(\sqrt{n}\frac{\bar{υ}_k^{ub}-\bar U_k}{\textstyle\sqrt{\bar M_{k-1}^{-1}+4\bar M_k^{-1}+\bar M_{k+1}^{-1}}}\right) 
  ~=~ e^{-O(n/k^{7/2})}
\end{align*}

\paragraph{Summary.}
In the table below we summarize the most important quantities for the tests $T:𝓧^n→ℝ$ derived in this section.
For $τ:=𝔼[T]$ we derived tight upper bounds even for small $k$.
We only show the $n\gg k\gg 1$ approximations in the table and refer to the exact expressions.
For the variance $V_k^{ub}$ we only show the better upper bound (empirical except for $M_k$).
\\[2ex]{\setlength\tabcolsep{2pt}
\begin{tabular}{r|l|l|c|c|c}
  Test Name & $T:=n\bar T:=$ & $\bar{τ}:=𝔼[\bar T]≤$ & $\Var[\bar T]\lesssim\bar V^{ub}=$ & $λ^*$ & $O(\ln\fr1p$) \\ \hline
  Even$≠0$  & $E:=∑_x N_x⟦N_x≠0~\text{even}⟧$ & $\bar{ε}^{ub}=1/2$ & $\fr1n ∑_{k≠0~\text{even}}k^2 M_k$ & $∞$ & $n$ \\
  Odd$≠1$   & $O:=∑_x N_x⟦N_x≠1~\text{odd}⟧$  & $\bar{ο}^{ub}=1/2$ & $\fr1n ∑_{k≠1~\text{odd}}k^2 M_k$ & $∞$ & $n$ \\
  2nd-Count & $M_k:=∑_x⟦N_x=k⟧$             & $\bar{μ}_k^{ub}\smash{\stackrel{\eqref{eq:Mkub}}{=}}\frac{1-\dot{ε}_{k-1}}{k\sqrt{2π(k-1)}}$ & $\bar{μ}_k^{ub}$ & $k-1$ & $\frac{n}{k^{3/2}}$ \\
  Slope     & $D_k:=M_k-M_{k-1}$                  & $\bar{δ}_k^{ub}\smash{\stackrel{\eqref{eq:Dkub}}{=}}\frac{1-\ddot{ε}_k}{k^2\sqrt{2πe}}$ & $\bar M_k+\bar M_{k-1}$ & \smash{${k-1/2+\atop\sqrt{k\!+\!1/4}}$} & $\frac{n}{k^{5/2}}$ \\ 
  Lin.Curv. & $C_k:=2M_k\!-\!M_{k-1}\!-\!M_{k+1}$         & $\bar{γ}_k^{ub}\smash{\stackrel{\eqref{eq:Ckub}}{=}}\frac{1+\tilde{ε}_k}{k(k+1)\sqrt{2πk}}$ & $4\bar M_k+\bar M_{k-1}+\bar M_{k+1}$ & $k$ & $\frac{n}{k^{7/2}}$ \\
  Log.Curv. & $\bar U_k:=\ln(M_k^2/M_{k-1}M_{k+1})$    & $\bar{υ}_k^{ub}\smash{\stackrel{\eqref{eq:Ukub}}{=}}\ln\frac{k+1}{k}≤\frac1k$ & $\bar M_{k-1}^{-1}\!+\!4\bar M_k^{-1}\!+\!\bar M_{k+1}^{-1}$ & any & $\frac{n}{k^{7/2}}$
\end{tabular}}

\begin{theorem}[\bfm IID tests]\label{thm:iidtests}
Consider the test statistics $\bar T$ and associated upper bounds 
on their mean $\bar{τ}$ and variance $\Var[\bar T]$ from the above table.
Then test $\bar T(x_{1:n})≥\sqrt{n}[\bar{τ}^{ub}+z_α\sqrt{\bar V^{ub}}]$
rejects that $x_{1:n}$ is iid with confidence $\gtrsim 1-c_n α$, i.e.\ at significance level $\lesssim c_n α$,
where $z_α:=Φ^{-1}(1-α)$.
The accuracy of $\lesssim$ is $O_P(1/n\sqrt{\Var[\bar T]})$,
except for $E$ and $O$ for which it is $O((∑_x θ_x^3)/(∑_x θ_x^2)^{3/2})$.
See \Cref{lem:bubtest} (with $Z_+=T$ and $Z_x=T_x$) for $p$-values and further details.
\end{theorem}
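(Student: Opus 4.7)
The theorem consolidates the six test constructions already carried out individually in this section, so the plan is to verify that each one fits a single unified template and then invoke \Cref{lem:bubtest} (plus \Cref{lem:mdelta} for the nonlinear test) once. Concretely, for each row of the table I would (i) write the test statistic in the form $T=\sum_x Z_x$ (or, for $\bar U_k$, as a smooth function $g(\v{\bar Z})$ of such sums), (ii) identify the mean bound $\bar\tau^{ub}$ that was already established above via \Cref{prop:ublt} and \Cref{lem:ubfe}, (iii) identify the (possibly empirical) variance bound $\bar V^{ub}$, and (iv) check that the $Z_x$'s are bounded so a CLT/Berry--Esseen approximation applies. Then the rejection region $\bar T \ge \sqrt{n}[\bar\tau^{ub}+z_\alpha\sqrt{\bar V^{ub}}]$ is exactly the threshold produced by \Cref{lem:bubtest}.

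The first step is to handle the five linear tests $E,O,M_k,D_k,C_k$ uniformly. Each is of the form $T=\sum_k\alpha_k M_k=\sum_x Z_x$ with $Z_x=\sum_k\alpha_k\mathbf{1}[N_x=k]$ taking values in a bounded interval ($[0,1]$ for $M_k$, $[-1,1]$ for $D_k$, $[-2,2]$ for $C_k$, and $[0,k]$-type for $E,O$ after restricting to the relevant parities). Under $P_\vl$ the $Z_x$'s are independent, so the standardized sum $(T-\mathbb E[T])/\sqrt{\mathrm{Var}[T]}$ converges to $\mathrm{Gauss}(0,1)$ by the (Lindeberg) CLT. Combined with $\mathbb E[T]\le n\bar\tau^{ub}$ and $\mathrm{Var}[T]\le n\bar V^{ub}$, this yields the one-sided tail inequality $P_\vl[\bar T\ge\sqrt n(\bar\tau^{ub}+z_\alpha\sqrt{\bar V^{ub}})]\lesssim\alpha$. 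Transferring from $P_\vl$ to the real sampling law $P_\vt$ costs only the factor $c_n$ from \eqref{eq:cn}, producing the stated confidence $1-c_n\alpha$. For $E$ and $O$ the $Z_x$ are unbounded in principle, so I would instead apply Berry--Esseen, which gives the stated rate $O((\sum_x\theta_x^3)/(\sum_x\theta_x^2)^{3/2})$ on the Gaussian approximation; for the other tests the standard CLT error is of order $O_P(1/n\sqrt{\mathrm{Var}[\bar T]})$, matching the theorem's accuracy claim. Where $\bar V^{ub}$ is empirical (as for $E,O,D_k,C_k$), I would additionally invoke Slutsky's lemma to replace the random normalizer by its expectation, which contributes only lower-order error.

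The sixth test $\bar U_k$ requires the extra step of linearization. Since $g(\v z)=2\ln z_1-\ln z_2-\ln z_3$ is smooth on the positive orthant and $\v{\bar Z}$ concentrates around $\v{\bar\zeta}=(\bar\mu_k,\bar\mu_{k-1},\bar\mu_{k+1})$, the multivariate delta method (\Cref{lem:mdelta}) gives $\sqrt n(\bar U_k-g(\v{\bar\zeta}))\Rightarrow\mathrm{Gauss}(0,\nabla g^\top\Sigma\nabla g)$ where $\Sigma$ is the asymptotic covariance of $\v{\bar Z}$; the derivation above already computed $\nabla g^\top\Sigma\nabla g\le\bar M_{k-1}^{-1}+4\bar M_k^{-1}+\bar M_{k+1}^{-1}$ after using \Cref{lem:Zkxcor}a and substituting the empirical counts for the unknown $\mu_k$'s, justified again by Slutsky. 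Combining with the mean bound $\bar\upsilon_k^{ub}=\ln((k+1)/k)$ from \eqref{eq:Ukub} and the $c_n$ factor completes this case.

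The only real obstacle is bookkeeping rather than mathematical depth: making the $\lesssim$ statements precise requires tracking three distinct sources of error, namely (a) the CLT (or Berry--Esseen) Gaussian approximation under $P_\vl$, (b) the Poisson-to-multinomial transfer factor $c_n$, and (c) the Slutsky-type replacement of variances by empirical variances (or of $\mu_k$ by $M_k$ inside $\bar U_k$). The cleanest presentation is to package all three into the single statement of \Cref{lem:bubtest} applied row by row, read off $\bar\tau^{ub}$ and $\bar V^{ub}$ from the table, and note that the accuracy rates are dictated by which of (a)--(c) dominates, which is exactly what the theorem's final sentence records.
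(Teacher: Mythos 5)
Your plan is correct and follows essentially the same route as the paper's own (very terse) proof: decompose each $T$ as $\sum_x Z_x$ (vector-valued $\v Z_x$ with the delta method \Cref{lem:mdelta} for $\bar U_k$), invoke \Cref{lem:bubtest} row by row with the tabulated $\bar\tau^{ub}$ and $\bar V^{ub}$, use boundedness of $Z_x$ for the $O_P(1/n\sqrt{\Var[\bar T]})$ accuracy, and treat $E$ and $O$ separately via the Berry--Esseen ratio $\rho_+/\sigma_+^{3/2}\simeq(\sum_x\theta_x^3)/(\sum_x\theta_x^2)^{3/2}$ since their summands are not uniformly bounded.
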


In our experiments, $θ_x=1/d$ ({\sf uniform}) and $θ_x=2x/d/(d+1)$ ({\sf linear}),
so $(∑_x θ_x^3)/(∑_x θ_x^2)^{3/2}≤1.3/\sqrt{d}$ with $d=30...100$, 
so the Gaussian approximation is not great but ok for $α=0.05$.
For the other tests, the Gaussian approximation is quite good.

\begin{proof} 
Follows directly from the derivations in this section and \Cref{lem:bubtest} 
and the fact that all $T$ have the required decomposition $T=∑_x Z_x$ ($\v Z_x$ for $\bar U_k$ within $O_P(\frs1n)$).
$Z_x$ is bounded, so \Cref{lem:bubtest}(iii) applies, except for $E$ and $O$:

Let $E^x:=∑_{0≠k≠n~\text{even}} k⋅⟦N_x=k⟧$, then $E=∑_x E^x$.
Similar to the derivation of $𝔼[E]$ one can show for $α>0$ that 
\begin{align*}
  𝔼[|E^x-𝔼[E^x]|^α] ~&\simequd~ (\fr12 λ_x)^α \\
  \text{This implies}~~~ σ_+^2 ~&≡~ \textstyle ∑_x\Var[E^x] ~\simequd~ (\fr12 λ_x)^2 \\
  \text{and}~~~~~~~~~~~~~~ ρ_+ ~&≡~ \textstyle ∑_x 𝔼[|E^x-𝔼[E^x]|^3] ~\simequd~ (\fr12 λ_x)^3 \\
  \text{hence}~~~~~ ρ_+/σ_+^{3/2} ~&\simequd~ \textstyle (∑_x θ_x^3)/(∑_x θ_x^2)^{3/2}.
\end{align*}
The condition in \Cref{lem:bubtest}(iii) applies if $ρ_+/σ_+^{3/2}=O(1/\sqrt{n})$.
For the CLT to hold asymptotically $ρ_+/σ_+^{3/2}→0$ suffices; see \Cref{thm:esseen}.
The expressions for $O$ are the same.
\qed\end{proof}

\section{Toy/Control Experiments}\label{sec:exp}

We verify the tests developed in \Cref{sec:iidtests} on artificially generated data.
We generate iid data for the extremes of all $θ_x$ being the same,
and $\vt$ being maximally diverse. 
This is used for testing the validity of our tests (correct low Type~I error).
We then ``corrupt'' the samples in various ways to create non-iid data 
to determine the power of the tests in rejecting $\Hiid$ (low Type~II error).
For instance, some tests are able to detect data duplication and draws from finite card decks.
Every test displayed its own strengths and weaknesses.
There was no uniformly best test among them.
This is not meant to be a comprehensive evaluation of the tests,
but a sanity check that the tests work as intended.

\emph{Remark:} As discussed in \Cref{sec:XtoN} we can restrict our attention to $𝓧=ℕ$.
On the other hand, it makes no difference whether we sample from $𝓧$ or $𝓧':=\{x:θ_x>0\}$,
so experimentally we can as well assume that $d=d'=|𝓧'|$,
but given the caveat described in \Cref{sec:XtoN},
we still should imagine $𝓧=ℕ$ and the parameter $d$ in the experiments below now decoupled from $𝓧$.
Alternatively, mentally replace every $d$ in this section by $d'$.

\paragraph{Data generation.}
\emph{Iid sampling} ({\sf iid})
For the iid distributions $P_\vt$ we tested two ``extreme'' choices.
({\sf uniform}) One in which all $\vt$ are the same for $d$ categories, and $0$ for all others,
i.e.\ w.l.g.\ $P_\vt[x]=θ_x=1/d~∀x∈\{1:d\}$, and $P_\vt[𝓧\setminus\{1:d\}]=0$. 
This should be the hardest case to not accidentally reject $\Hiid$,
since $𝔼[M_k]$ \eqref{eq:EMk} is maximally peaked out (Figure~\ref{fig:MEMPV1} top left).
({\sf linear}) The other extreme is for which 
the probabilities $θ_x$ of categories $x$ are equally/uniformly ``distributed'',
i.e.\ $θ_x\propto x$ for $x∈\{1:d\}$, i.e.\ $θ_x=2x/d(d+1)$.
$𝔼[M_k]$ is maximally washed out in this case (Figure~\ref{fig:MEMPV2} top left).

\emph{Exact data duplication} ({\sf even-n})
We created non-iid distributions $Q$ out of the iid ones as follows:
To mimic the data duplication problem, we sampled $x_{1:n/2}$ iid
and then duplicated each item to $x_{1:n/2}x_{1:n/2}$,
and then shuffled (though the tests only depend on the counts $n_x$, so shuffling is not necessary).
This makes all $n_x$ even, hence all $m_k=0$ for odd $k$.

\emph{Approximate data duplication} ({\sf even-m})
We also tested duplication and then injectively corrupt the data to $x'_{1:n/2}$,
so that all $x'_{1:n/2}$ differ from all $x_{1:n/2}$.
This mimics approximate duplicates.
The effect is that for each count $n_x$ there is a deterministically corrupted $x'$ with same count $n_{x'}$,
which in effect means that all $m_k$ are even.
This is a hard signal to detect without explicitly searching for it,
and indeed our tests don't. 
See \Cref{sec:app} for more discussion.

\emph{No empty categories} ({\sf no-empty})
We also sampled, $\v x$ iid and then increased the count $n_x$ for each $x∈\{1:d\}$ by 1.
This eliminates all empty categories for $x≤d$,
but note that $𝓧$ itself is intended to be infinite,
so does not really remove \emph{all} empty categories. 
Technically we sample $x_{1:n-d}$ iid from $\{1:d\}$ and then add $x_{n-d+x}=x$ for $x∈\{1:d\}$ and then shuffle.
Such $x_{1:n}$ is \emph{not} iid w.r.t.\ any $P_\vt$,
but the signal is in general very weak, and none of our tests were able to pick it up.

\emph{No unique\&empty categories} ({\sf no-unique})
Finally we increased each count by two so that every data item appears at least twice,
but unlike {\sf even-n} can also appear an odd number of times.
Technically we sample $x_{1:n-2d}$ iid from $\{1:d\}$ 
and then add $x_{n-2d+x}=x_{n-d+x}=x$ for $x∈\{1:d\}$ and then shuffle.
As long as the original iid sample has enough items of low multiplicities,
there is a clear signal that the data is non-iid as explained in \Cref{sec:app}.

\paragraph{Choice of ``hyper''-parameters.}
We also have to choose $d$, $n$, and $k$. 
Since this is not a systematic empirical study, not even on toy data, 
but only to illustrate the tests and corroborate the theoretical arguments,
we chose some arbitrary and some interesting values without any claim of coverage.
$k$ is typically chosen where the tests are strongest,
essentially where $μ_k$ is large.

\paragraph{Test setup and graphs.}
We tested our tests ($T∈\{E,O,M_k,D_k,C_k,\bar U_k\}$) for over/under-confidence 
on the artificial iid data and their power on the artificial non-iid data above.
We sampled data $x_{1:n}$ a \smax\ times from $P$ or $Q$, and computed \smax\ $p$-values for each test.
Let $\tilde T:=Φ((τ^{ub}-T)/\sqrt{V^{ub}})$, 
where $τ^{ub}$ and $V^{ub}$ are the upper bounds for mean and variance of $T$ 
we derived for our tests (see test table before \Cref{thm:iidtests}).
We report $\hat P[\tilde T≤α]$, the fraction of $p$-values below $α$, 
as a function of $α$ (Figures~\ref{fig:MEMPV1} left).
For an ideal uniformized test (see \Cref{sec:moretests}), $P_\vt[\tilde T≤α]=α$, 
but this rarely possibly to achieve simultaneously for \emph{all} $\vt$.
If data is sampled iid from $P_\vt$, a valid test should be below this diagonal line,
or at least not much above, or at the very least 
(approximately) below $α$ for the $α$ we care about, typically $α=0.1\%...5\%$.
For non-iid data we want the power $β(α):=Q[\tilde T≤α]$ to be as large as possible,
ideally close to 1 for the $α$ we care about, 
since this is the probability our test correctly rejects $\Hiid$ if data is sampled from $Q$.

We also plot the empirical second-order counts $M_k$ as a function of $k$ for one sample $x_{1:n}$,
together with their true \emph{uncorrupted} expectation $𝔼_\vt[M_k]$,
which in some informal sense is the iid distribution closest to the corrupted non-iid distribution.
We also plot the empirical average $\langle M_k\rangle$ over the \smax\ runs,
which in case of iid data is very close to the true mean $𝔼_\vt[M_k]$ (Figures~\ref{fig:MEMPV1}\&\ref{fig:MEMPV2} left).

\paragraph{Explanation of the figures.}
The legends in Figures~\ref{fig:MEMPV1}\&\ref{fig:MEMPV2} also display 
the hyper-parameters $d$, $n$ (and $k$ for the right graphs),
together with the sampling procedure ({\sf uniform$|$linear}) and corruption model
({\sf iid$|$even-n$|$no-unique}).
Test $M$ uses the theoretical upper bound $μ^{ub}$ for its variance,
all other tests use the empirical upper bounds for their variance.
{\sf u-test} is a uniformly at random sampled ``test'' $u\sim$Uniform$[0;1]$ for control purposes, 
and should be very close to the diagonal ({\sf exact}); 
deviations are due to the finite sample approximation.
The \% behind the test is the fraction of times $\hat{β}=\hat Q[p≤0.05]$ (or $P_\vt$ if iid), 
the test rejected $\Hiid$ at 5\% significance level.
The true reject probability is $\hat{β}±O({\hat{β}(1-\hat{β})/\smax})^{1/2}$.
In each graph, we also plotted the \smax\ $p$-values 
for the (at $α=5\%$) most powerful test (with uniformly at random $y$-component).
For a perfect test on iid data, these points would be uniformly distributed in $[0;1]×[0;1]$.

\begin{figure*}[ptbh!]
\begin{center}\def\vsl{0.225}\def\vsr{0.225}
\includegraphics[width=0.47\textwidth,height=\vsl\textheight]{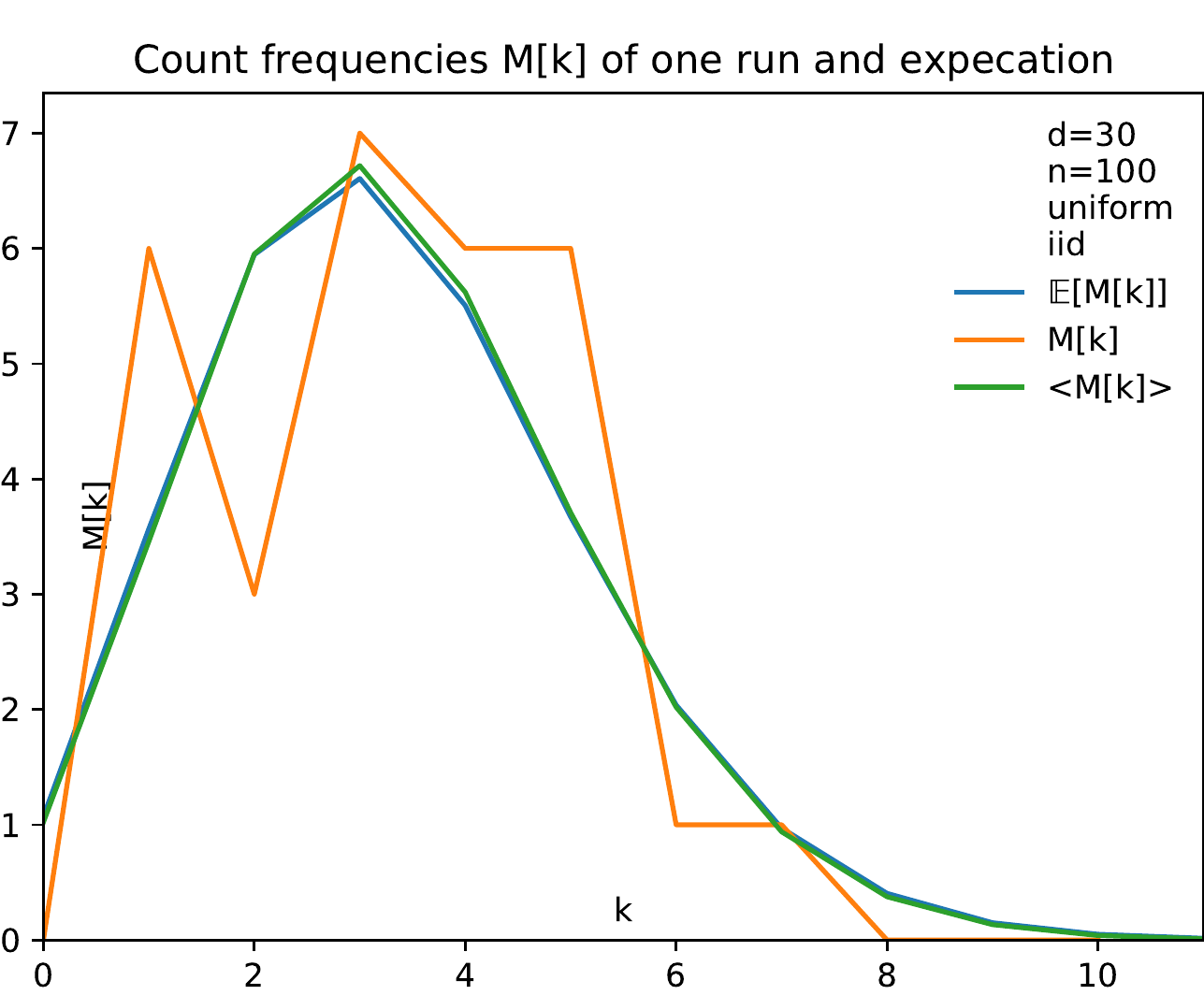}~~%
\includegraphics[width=0.51\textwidth,height=\vsr\textheight]{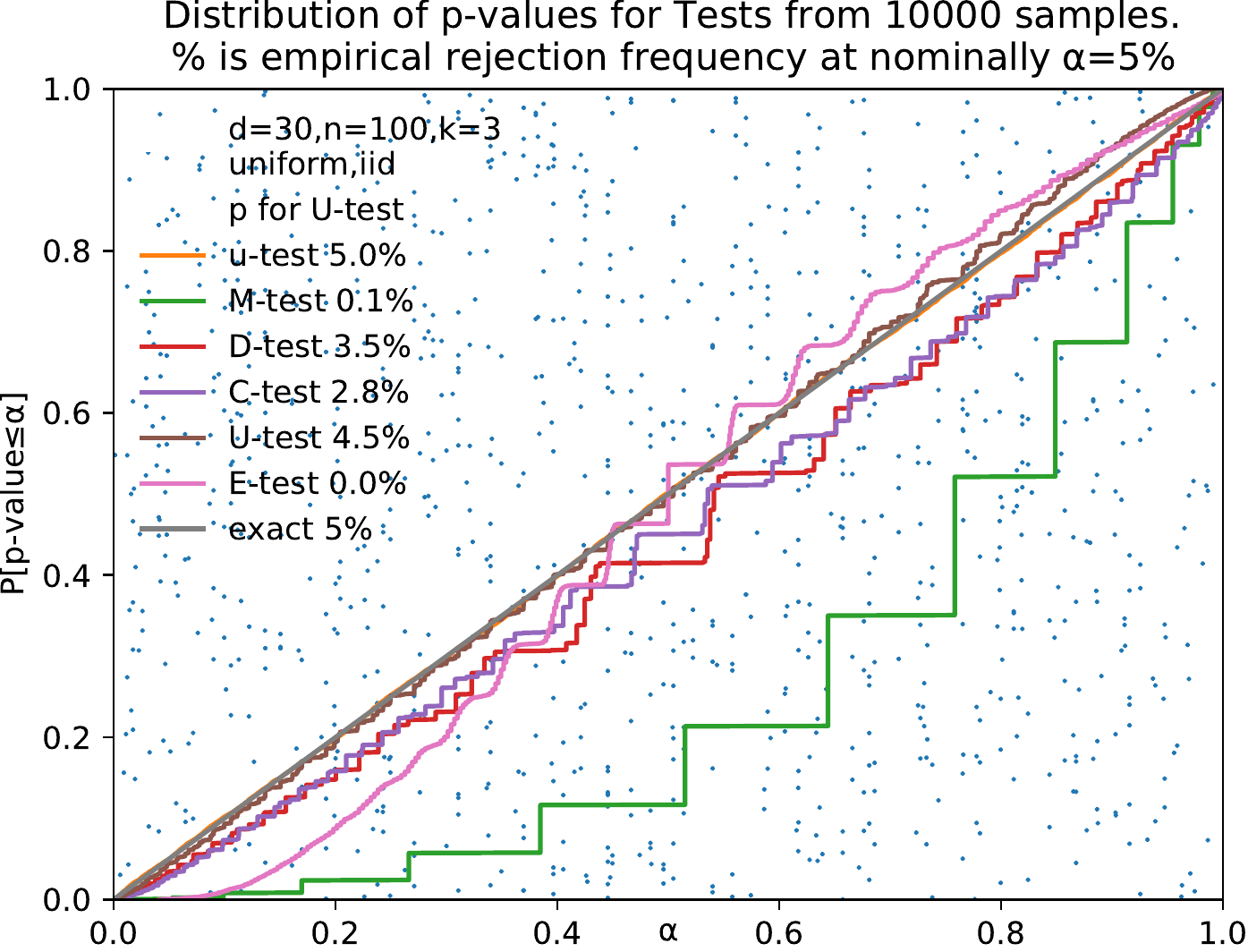}

\includegraphics[width=0.47\textwidth,height=\vsl\textheight]{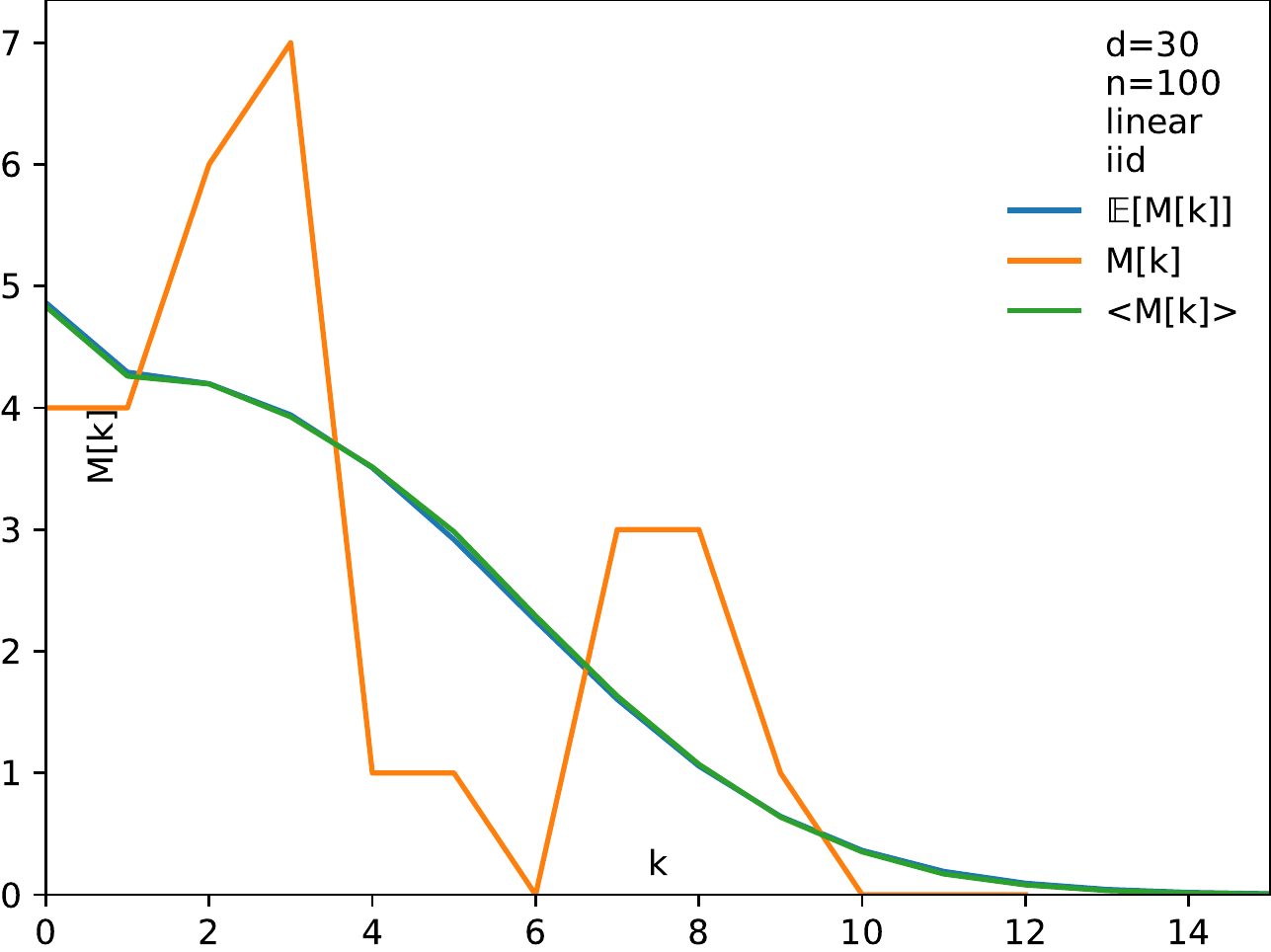}~~%
\includegraphics[width=0.51\textwidth,height=\vsr\textheight]{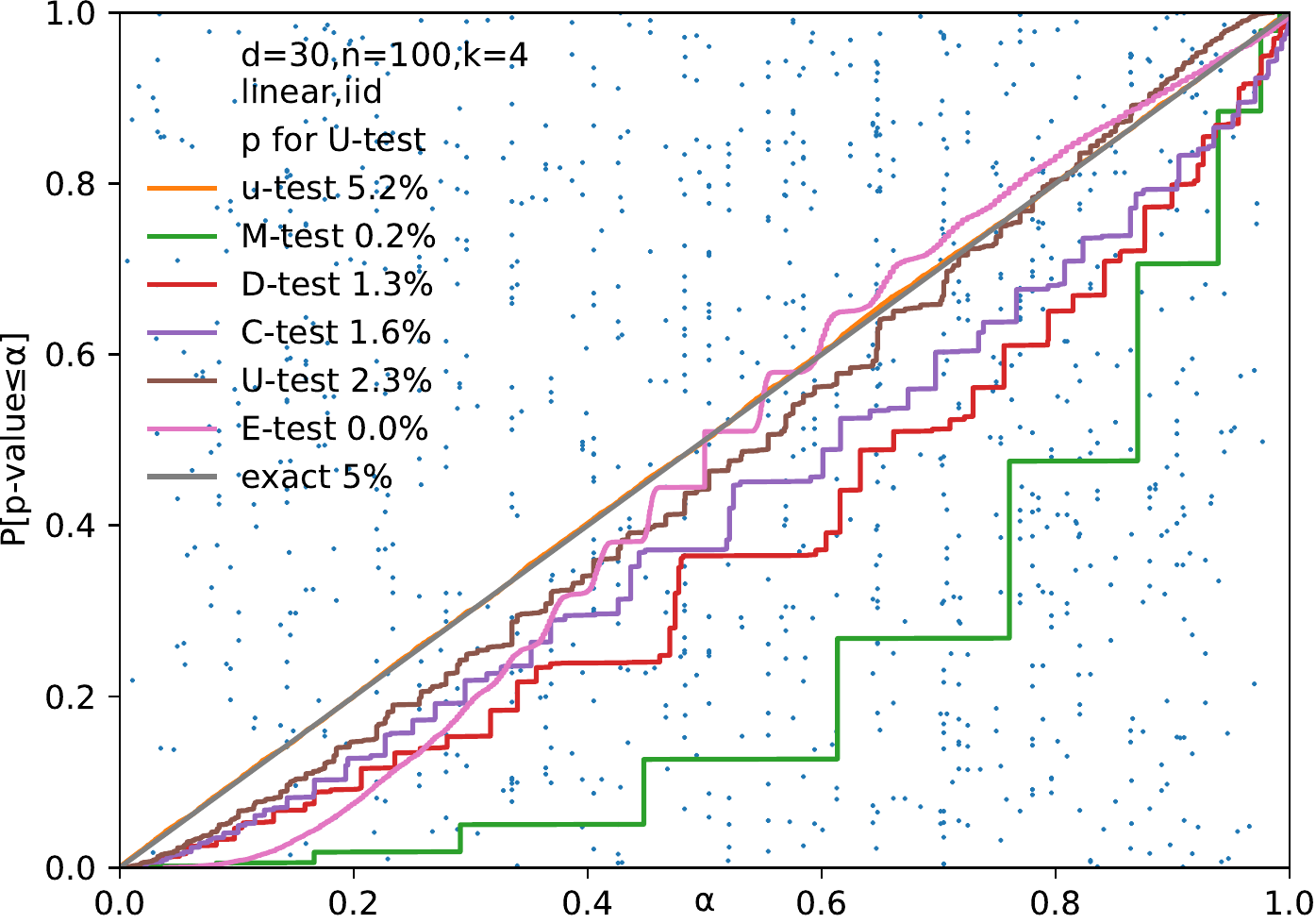}

\includegraphics[width=0.47\textwidth,height=\vsl\textheight]{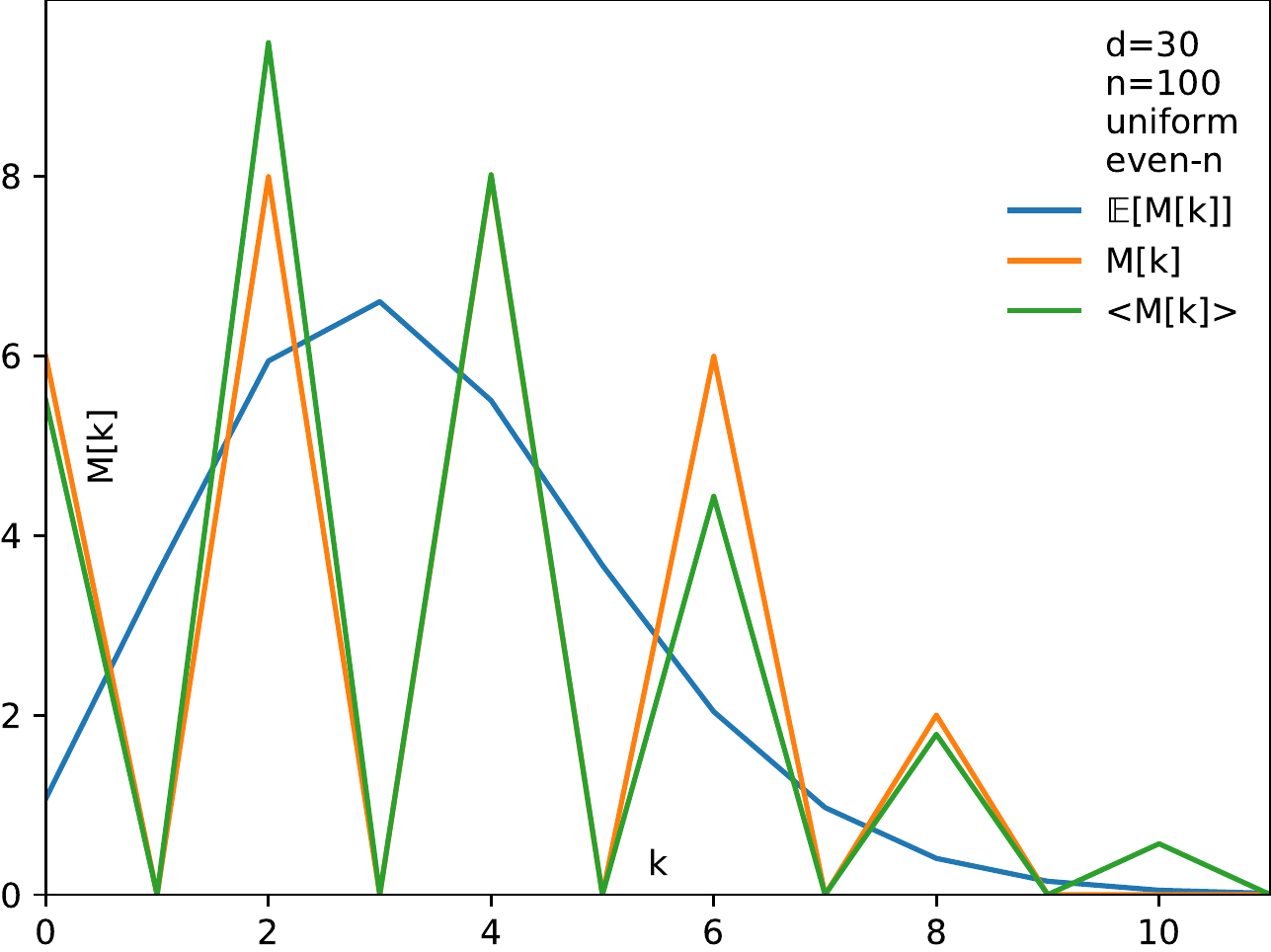}~~%
\includegraphics[width=0.51\textwidth,height=\vsr\textheight]{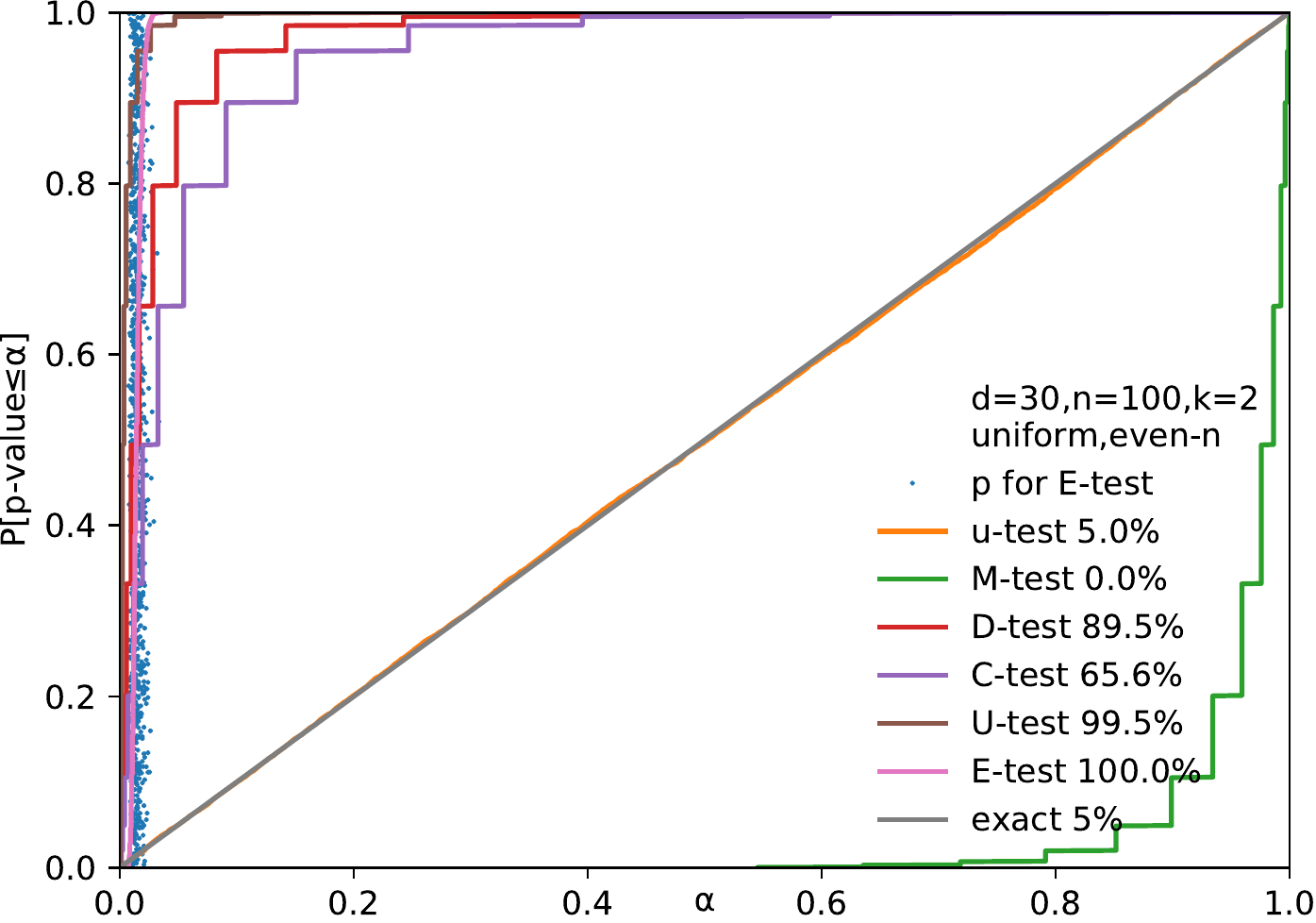}

\includegraphics[width=0.47\textwidth,height=\vsl\textheight]{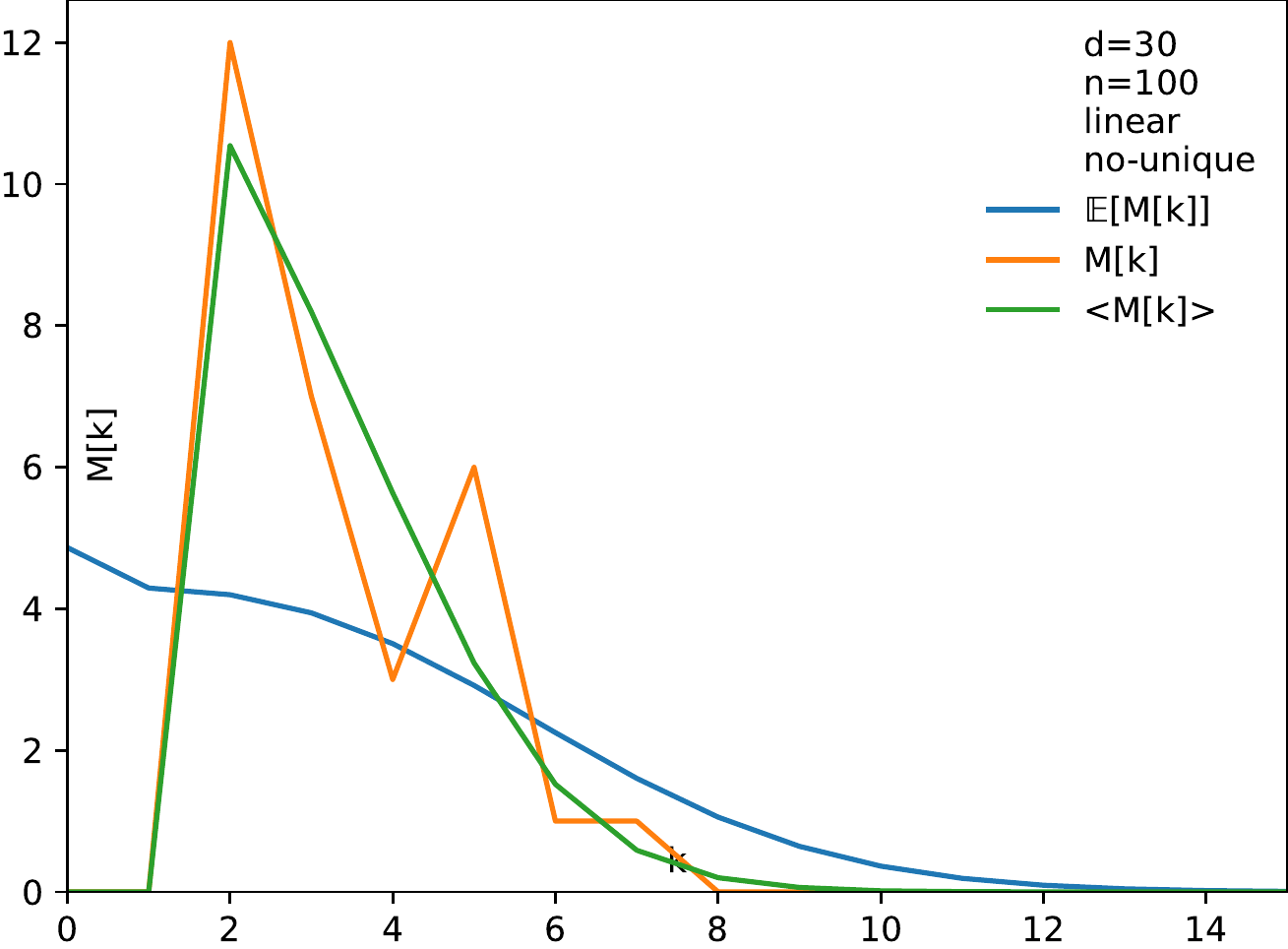}~~%
\includegraphics[width=0.51\textwidth,height=\vsr\textheight]{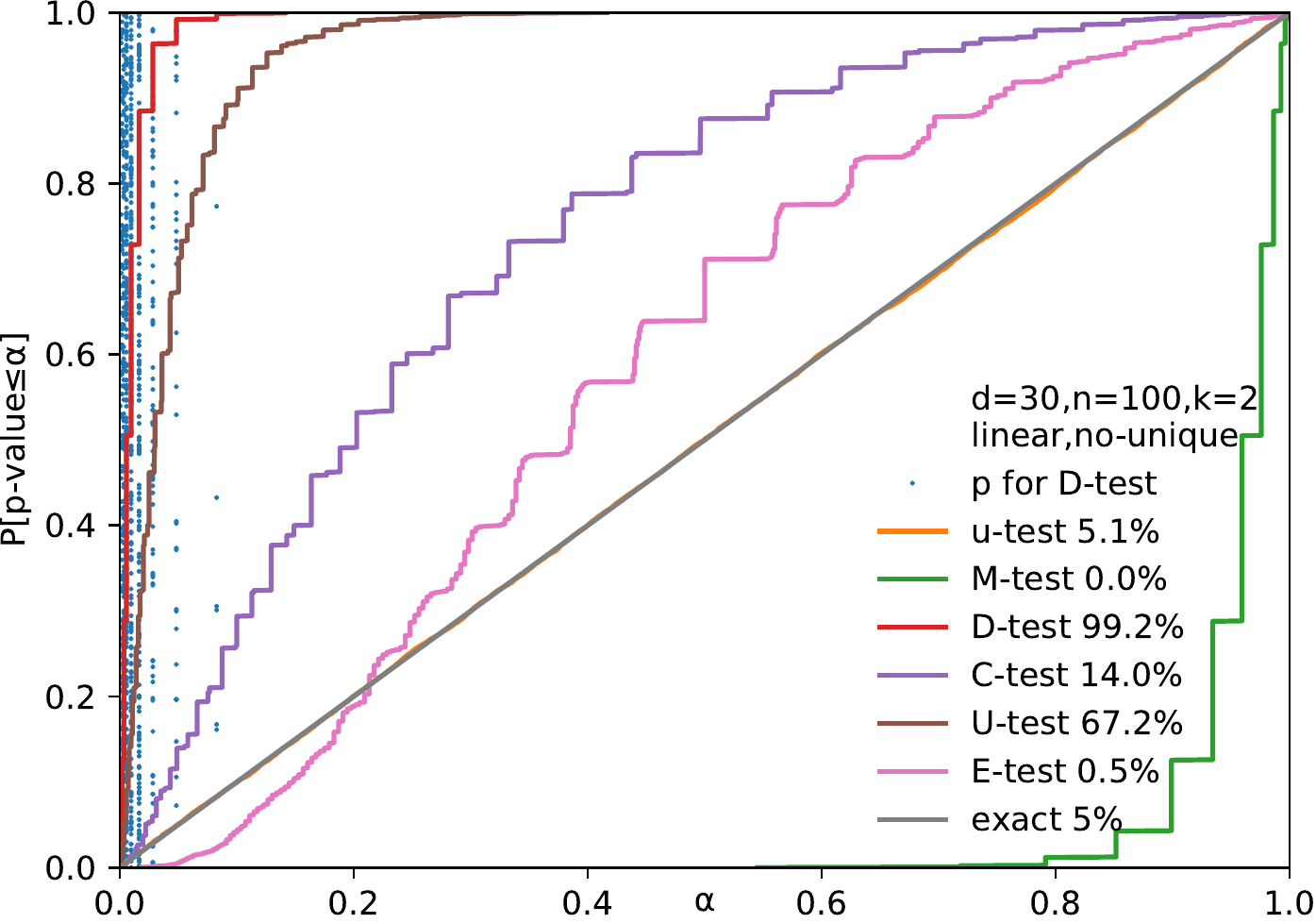}\vspace*{-2ex}
\caption{{(\bf Testing the tests)} 
\smax\ data sets $x_{1:n}$ are sampled iid from $P_\vt$ or non-iid $Q$.
\emph{(left)} One sample, average, and expected second-order counts $M_k$ as a function of $k$.
\emph{(right)} $p$-value distribution of various tests.
For iid $P_\vt$, a curve above/below the diagonal means over/under-confidence, 
i.e.\ we want on or below. For non-iid we want far above.
\% is the empirical rejection frequency at nominally $α=5\%$.
}\label{fig:MEMPV1}\vspace*{-2ex}
\end{center}
\end{figure*}

\begin{figure*}[ptbh!]
\begin{center}\def\vsl{0.225}\def\vsr{0.225}
\includegraphics[width=0.47\textwidth,height=\vsl\textheight]{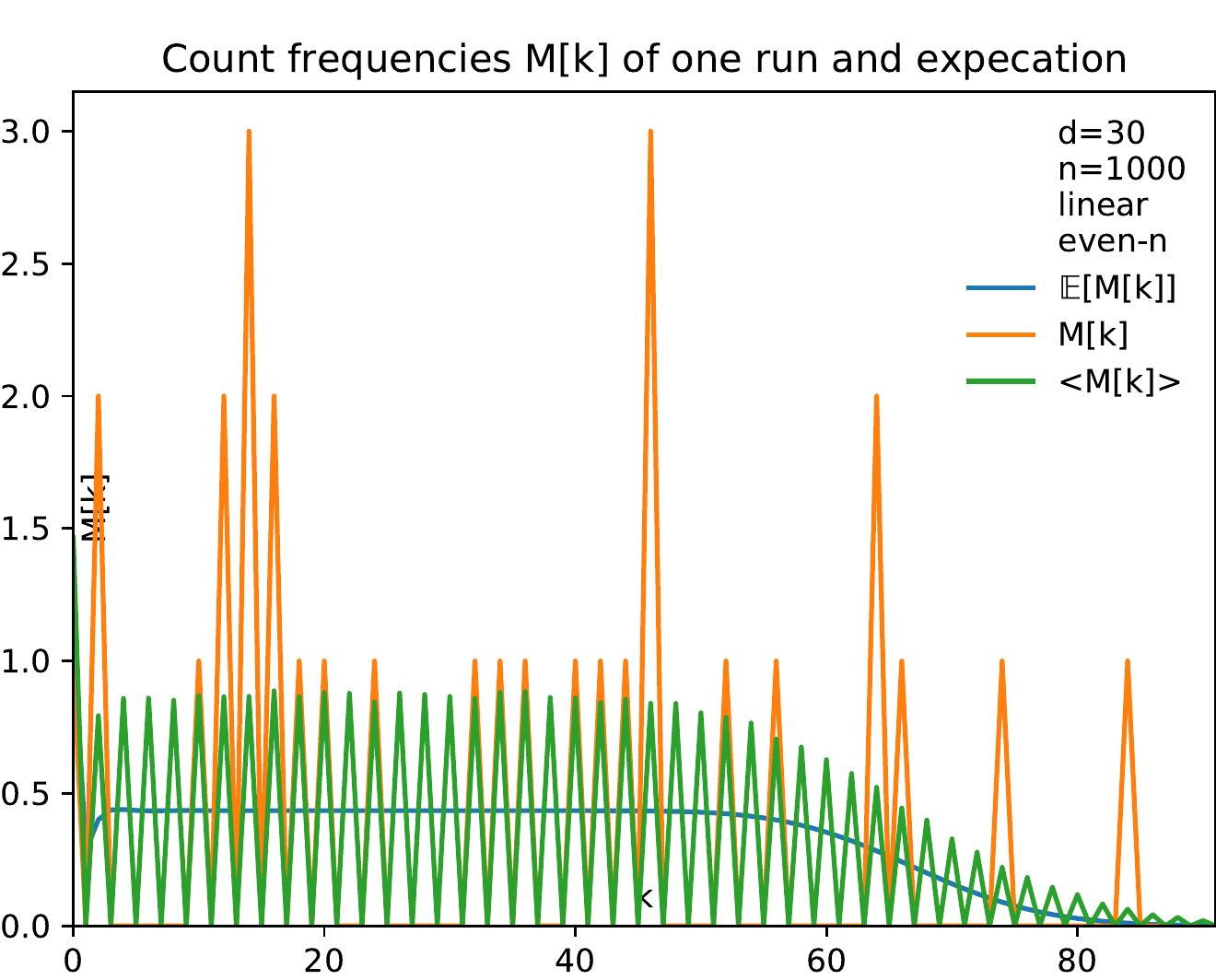}~~%
\includegraphics[width=0.51\textwidth,height=\vsr\textheight]{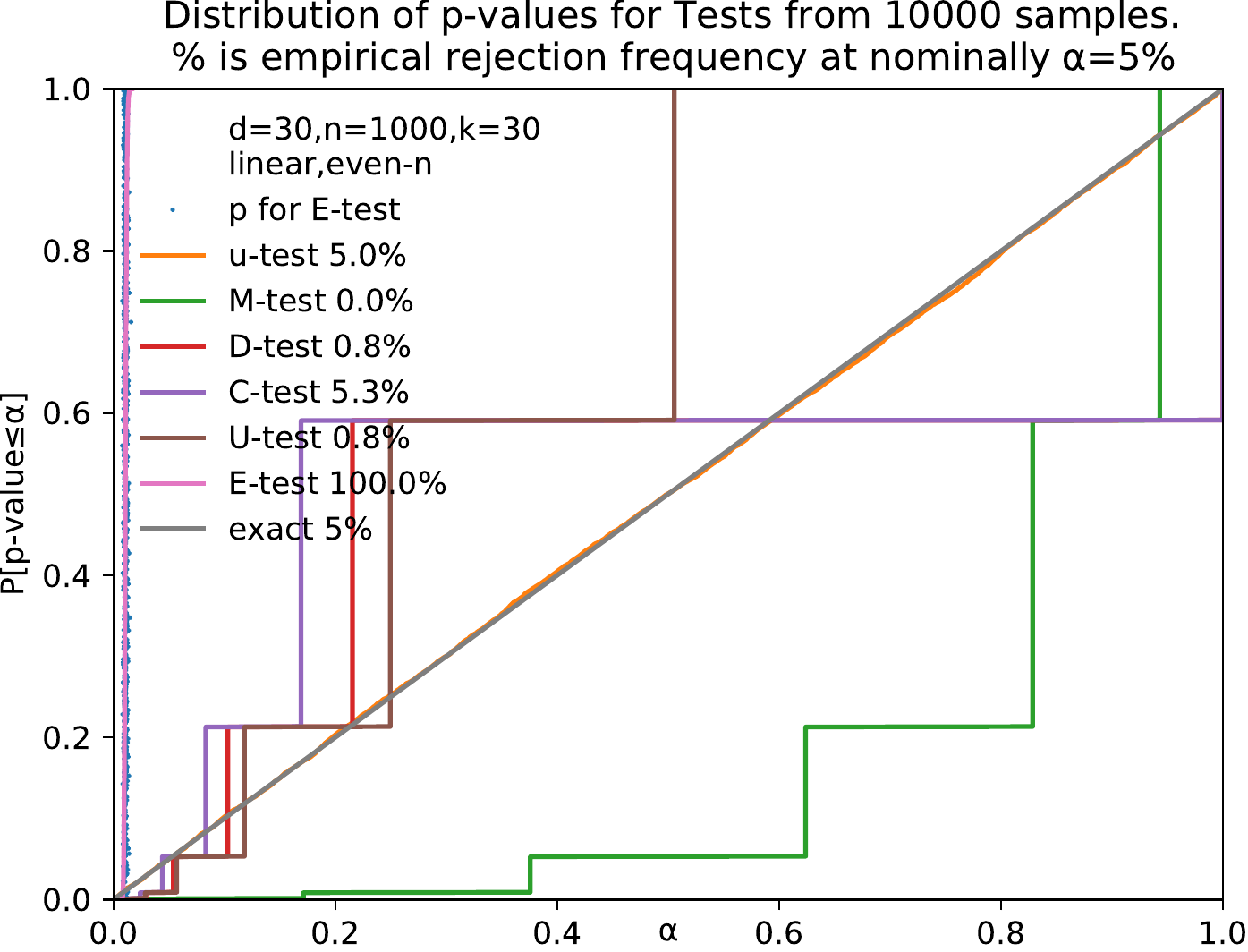}

\includegraphics[width=0.47\textwidth,height=\vsl\textheight]{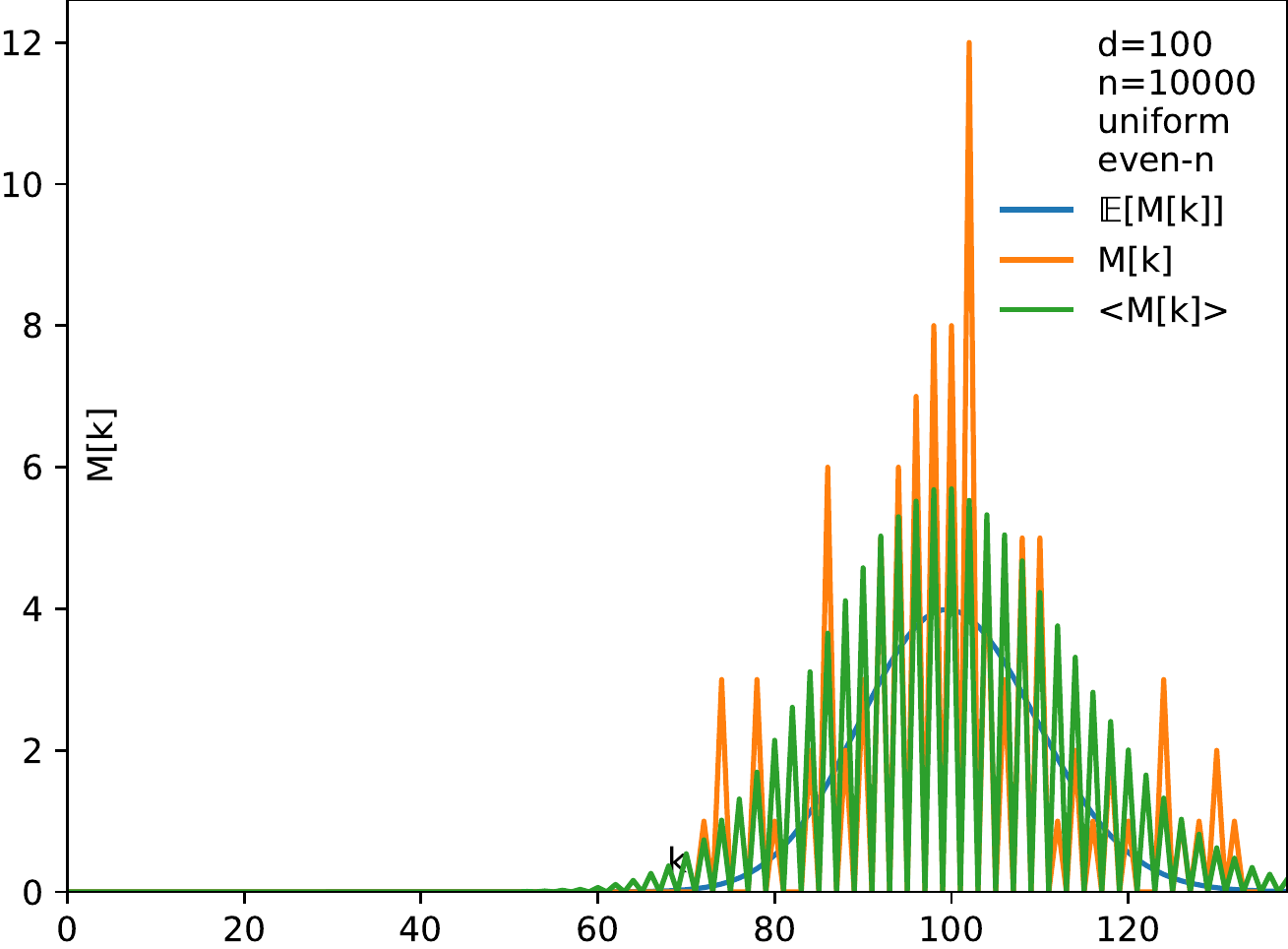}~~%
\includegraphics[width=0.51\textwidth,height=\vsr\textheight]{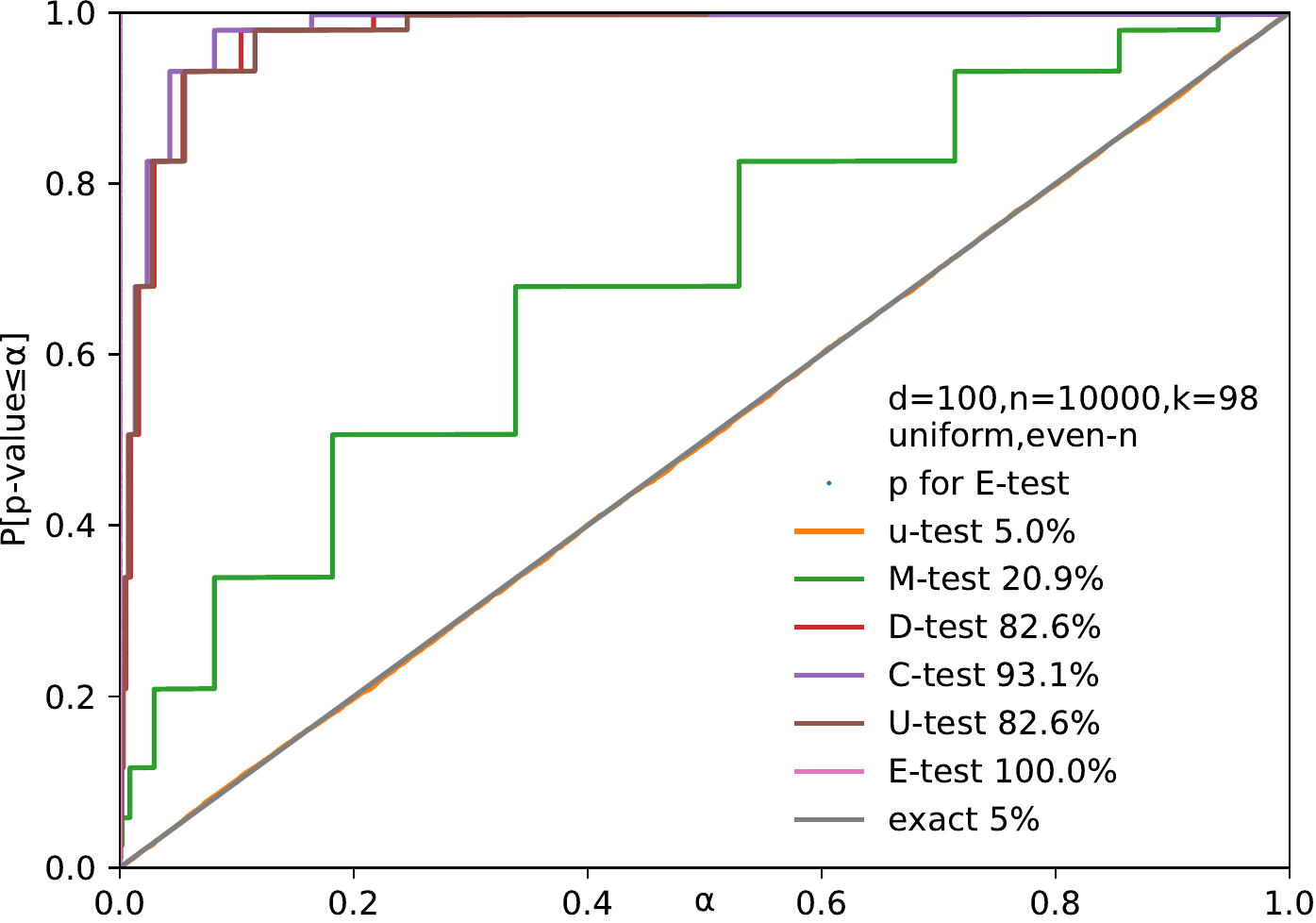}

\includegraphics[width=0.47\textwidth,height=\vsl\textheight]{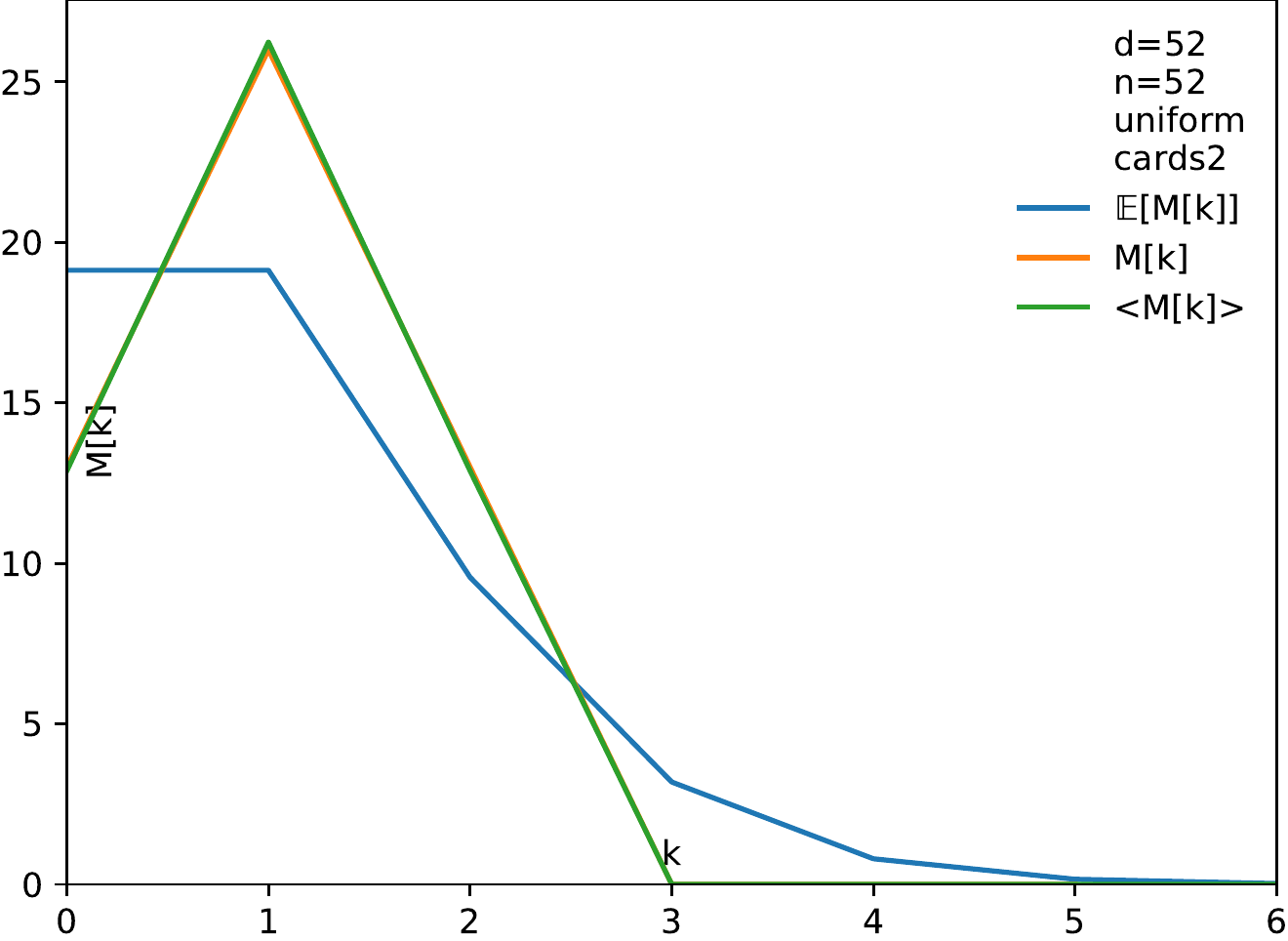}~~%
\includegraphics[width=0.51\textwidth,height=\vsr\textheight]{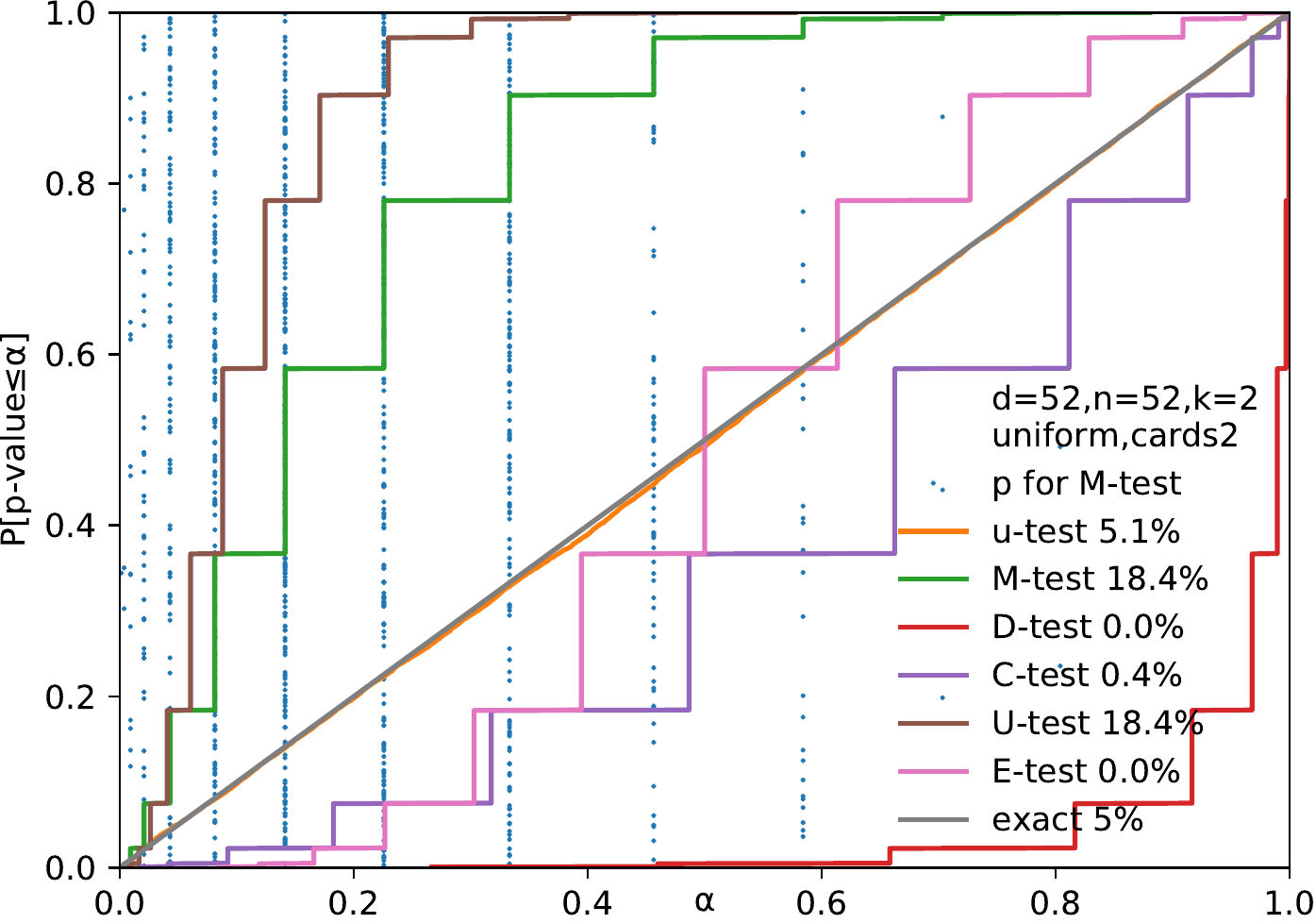}

\includegraphics[width=0.47\textwidth,height=\vsl\textheight]{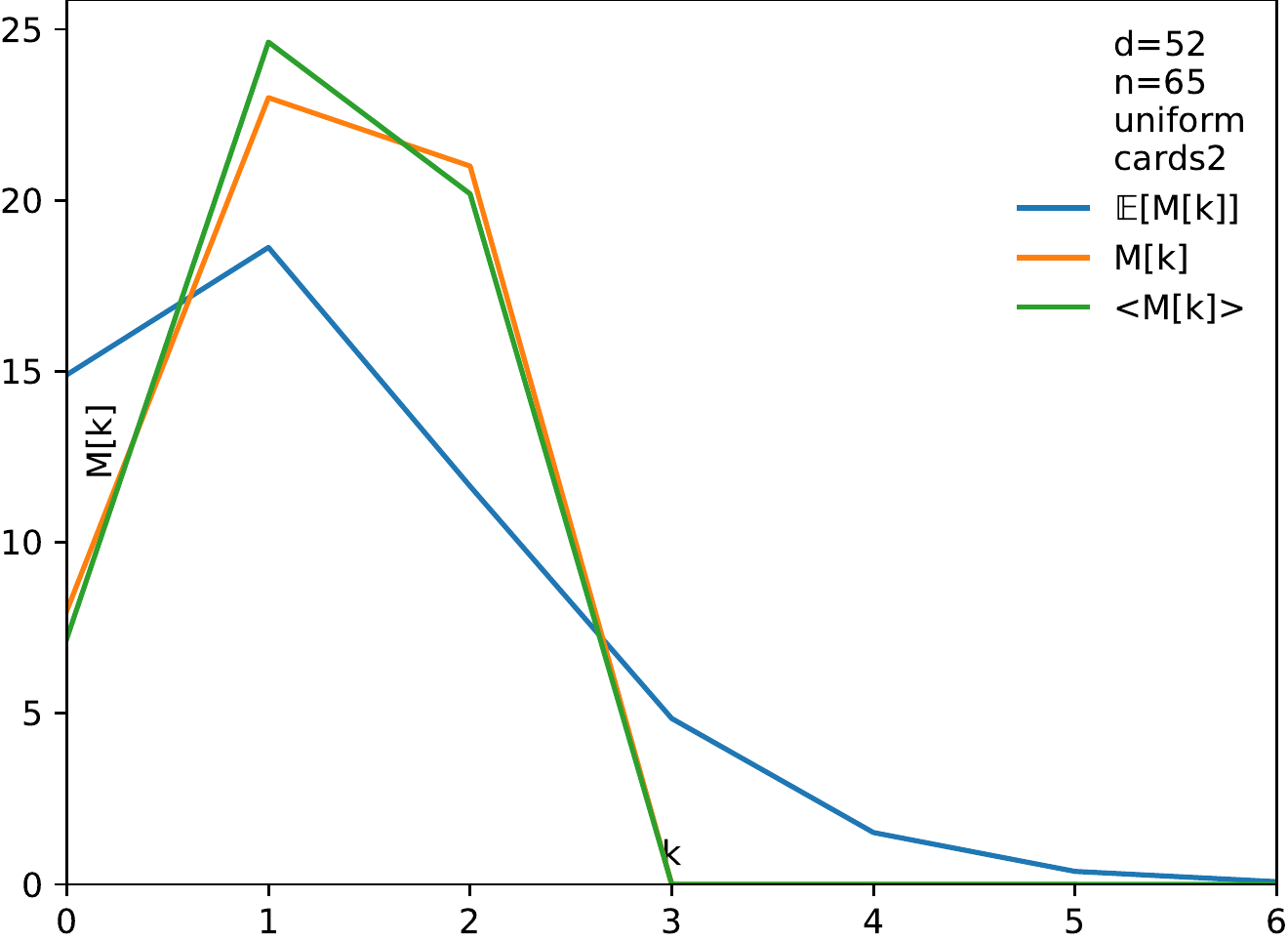}~~%
\includegraphics[width=0.51\textwidth,height=\vsr\textheight]{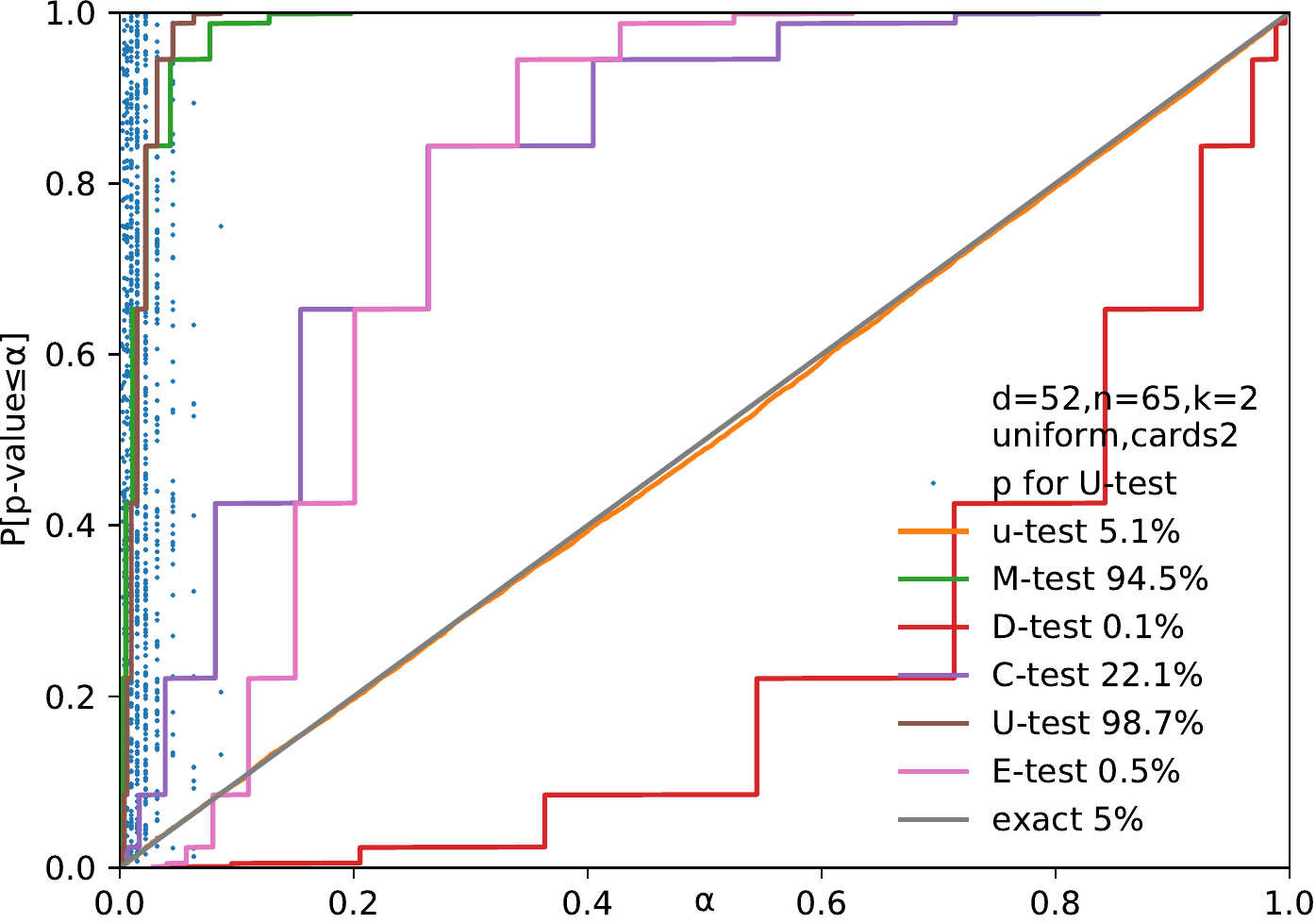}\vspace*{-2ex}
\caption{{(\bf Testing the tests)} 
\smax\ data sets $x_{1:n}$ are sampled iid from $P_\vt$ or non-iid $Q$.
\emph{(left)} One sample, average, and expected second-order counts $M_k$ as a function of $k$.
\emph{(right)} $p$-value distribution of various tests.
For iid $P_\vt$, a curve above/below the diagonal means over/under-confidence, 
i.e.\ we want on or below. For non-iid we want far above.
\% is the empirical rejection frequency at nominally $α=5\%$.
}\label{fig:MEMPV2}\vspace*{-2ex}
\end{center}
\end{figure*}

\paragraph{Experimental results per data type.}
\emph{Tests are not over-confident} ({\sf uniform-iid}):
Figure~\ref{fig:MEMPV1} top row is an example that shows that no test is over-confident (right).
For ${\sf uniform}$, all $θ$ are them same, 
hence $𝔼[\v M]$ is proportional to a Binomial($θ^*$) with maximum around $nθ^*=n/d\simequd3=k$ (left),
for which most tests are most sensitive.
We tested the tests on a variety of further combinations,
but without any surprises.
All tests are valid in the sense that they reject $\Hiid$ on iid data 
at level $α$ with probability less then $α$, or at least not much more,
so we refrained from showing the iid control plots for most of the non-iid experiments.
The occasional slight over-confidence could 
either be due to finite sample size $n$ and our asymptotic approximations 
(multinomial to Gaussian approximation) of our tests,
or variance from finite (\smax) experiment repetitions.

\emph{Tests are under-confident} ({\sf linear-iid}):
The second row shows that tests can be quite under-confident (right).
For {\sf linear}, the $θ$ values are spread out, 
hence $𝔼[\v M]$ is a broad mixture over Binomials of different $θ$ (left). 
The broader the distribution of $𝔼[\v M]$, the less confident the tests are.

\emph{Tests can be powerful} ({\sf uniform-even-n}):
The third row shows that many tests effectively detect if every data item is duplicated,
as is also obvious from the spikes in $\v M$ (left).
Unsurprisingly the most powerful one is the even-test $E$ which is tailored for this kind of data,
closely followed by the logarithmic curvature test $U_2$ with 99.6\% rejection rate.

\emph{Having no singletons can sometimes be significant} ({\sf linear-no-unique}):
Adding two copies to each observed $x$ roughly shifts $\v M$ two to the right ($M_0=M_1=0$),
so there is little non-iid signal in $M_k$ for higher $k$ (bottom left).
If most counts are very low, large $M_2$ with zero $M_1$ is suspicious.
Indeed the curvature test $M_2$ rejects $\Hiid$ 99.2\% of the time (bottom right).

\emph{Tests are often weak for larger $k$}  ({\sf uniform-even-n}):
Figure~\ref{fig:MEMPV2} top row shows a case with large (first-order) multiplicities (large $N_x$), 
i.e.\ most data items appear between 10 and 70 times.
{\sf linear} washes out any peak in $\v M$ which is indeed uniform in even $k$ from 0 to 60.
This makes all $M_k$ very small, around 1.
While some tests get a very weak signal, it is not strong enough to reject $\Hiid$.
The only effective test is {\sf even}, since combining all even $M_k$ amplifies the signal.
The second row is similar: here most $N_x$ are around 100 with $M_k$ around 4,
and besides $E$ (only) the linear curvature test is able to reject $\Hiid$ with 92.9\% confidence.

\emph{Cards from 2 decks}  ({\sf uniform-cards2}):
The bottom two rows of Figure~\ref{fig:MEMPV2} show the result of drawing $n$ cards 
from a pair of shuffled 52-card decks (104 cards in total).
Drawing half of the cards $(n=52)$ is not enough to reveal that $x_{1:n}$ is not iid (third row),
but drawing $n≥65$ suffices (last row).
240 cards from 6 decks with 312 cards in total are required to reject $\Hiid$.
$M_5$ rejects with 96\% confidence (not shown).

\paragraph{Experimental results per test.}
Comparing the various tests across the shown data examples and beyond,
they largely perform as expected. 
They are valid in the sense that the Type~I error is $\lesssim α$,
as we would expect, since they have been derived to satisfy this.
We also see that the Type~I error is often significantly smaller than $α$.
Every test has its own strengths and weaknesses,
and its power depends on the type of non-iid data they face.
All tests are sensitive to some non-iid signal,
and oblivious to (many) others.
Among the developed tests there was no uniformly best test,
but they could be combined to an approximately most powerful test as described in \Cref{sec:moretests}.
For all $k$-dependent tests (i.e.\ all but $E$ and $O$),
the more uniform $\vt$, i.e.\ the more concentrated the $μ_k$ are around some $k$,
and hence the larger $μ_k$, the more powerful the test in general.

\paragraph{Experimental results not shown.}
We confirmed that the alternative choice for the variance bounds, 
namely empirical for $M$, and theoretical for all others when available,
are worse than our (opposite) choice.
The Odd test was never effective on any of the created data,
but unsurprisingly is very effective if data is dominated by odd multiplicities.
As discussed above, our tests were not strong enough to detect 
the non-iid nature of `approximately duplicated data' ({\sf even-m}) 
and `no empty categories' ({\sf no-empty}).

\section{More/Alternative Tests}\label{sec:moretests} 

Here we present some alternative ways of deriving tests.
Either we have not worked out the details or specifics,
or we could not make them work,
or they were inferior to the ones derived in \Cref{sec:iidtests}.
They are nevertheless interesting and may be made to work with more effort.
Also disseminating failed attempts can be useful in itself.
We discuss the standard ways of combining test to broaden their power
by summation or maximization with Bonferroni correction.
Pushed to the extreme we get a universal Martin-Löf randomness test,
which in principle can detect all effective non-iid signals including  
the ``exotic'' ones from the coin flip example in \Cref{sec:app}.
We then derive a general template for invariant tests from a generalized likelihood ratio test.
This can be used to develop combinatorial tests and compression tests.
Finally we reduce the problem of testing independence of exchangeable data
to the empirical moment problem, which unfortunately is hard.

\paragraph{Summing tests.} 
Assume we have tests $T_1,T_2,...$. 
We could combine them by summing them up, $T_+=∑_k T_k$.
We could either sum their upper bounds $τ_k^{ub}$ to get an upper bound on $𝔼[T_+]$,
or derive an improved joint upper bound $τ^{ub}$  by maximizing $𝔼[T_+]$ directly.
For instance, $E=∑_{k...}k⋅M_k$ could be interpreted as a sum of tests $T_k=k⋅M_k$,
but summing up the individual bounds $𝔼[E]≤∑_{k...}k⋅μ_k^{ub}≤n⋅∑_k k/k^{3/2}=∞$ is vacuous,
while the joint bound $𝔼[T_+]≤n/2$ was non-vacuous.
Also, summing reduces the relative variance and may lead to stronger tests.
On the other hand, if some $T_k$ are highly negative,
they could ruin the sum and make the combined test weaker.
Using a weighted (e.g.\ by $1/\sqrt{\bar V_k^{ub}}$) sum may be better than a plain sum,
but makes it harder to get improved upper bounds on its expectation.

\paragraph{Bonferroni.} 
Let $c_k^α$ be such that $\sup_\vt P_\vt[T_k>c_k^α]≤α$, 
e.g.\ $c_k^α\simequdμ_k^{ub}+z_α\sqrt{μ_k^{ub}}$ for $T_k=M_k$,
and similarly for other tests, or hybrid combinations.
For finite $K⊂ℕ$, combined test $T_K:=\max\{T_k-c_k^{α/|K|}:k∈K\}$ 
with $c_K=0$ has significance $α$ ($α/|K|$ is the Bonferroni correction),
i.e.\ $P_\vt[T_K>0]≤α$.
Combining tests in this way, if some $T_k$ are highly negative, 
they do not impact the joint test $T_K$. They are just ineffective
and make $T_K$ a bit weaker, since they reduce $α/|K|$.

\paragraph{Uniformizing tests.} 
We can always uniformize tests $T$ with $\sup_\vt P_\vt[T(\v X)>c_α]≤α$
to $\sup_\vt P_\vt[\tilde T≤δ]≤δ$ as follows: 
Since $c_α:U⊆[0,1]→ℝ$ is monotone decreasing,
it has a left-continuos monotone decreasing inverse $\bar F:ℝ→[0;1]$, $\bar F(c_α)=α$,
hence $\sup_\vt P_\vt[\bar F(T(\v X))≤\bar F(c_α)]≤α$, 
hence $\tilde T(\v X):=\bar F(T(\v X))$ does as claimed.
For instance, if $T$ is standard Normal, then $c_α=z_α=Φ^{-1}(1-α)$, hence $\bar F(z)=1-Φ(z)$.
Alternatively, we can define decreasing $\bar F(t):=\sup_\vt\bar F_\vt(t)$ 
via decreasing survival function $\bar F_\vt(t):=P_\vt[T(\v X)>t]$,
then again for all $\vt$,
\begin{align*}
  P_\vt[\tilde T≤δ]  ~=~ P_\vt[\bar F(T)≤δ] ~≤~ P_\vt[\bar F_\vt(T)≤δ]
  ~≤~ P_\vt[T≥\bar F_\vt^{-1}(δ)] ~=~ \bar F_\vt(\bar F_\vt^{-1}(δ)) ~=~ δ
\end{align*}

\paragraph{Universal tests.} 
In uniform form, the Bonferroni correction is simply $\tilde T(\v X)=|K|⋅\min\{\tilde T_k:k∈K\}$.
If $\tilde T$, i.e. just one of the tests $\tilde T_k$ is small, we can reject $\Hiid$,
but we pay a price of $|K|$ in what small means. 
This generalizes to infinitely many tests, say $K=ℕ$, by defining 
\begin{align}\label{eq:unitest}
  \tilde T ~&:=~ \min\{k(k+1)\tilde T_k:k∈ℕ\}: \\
  \textstyle P_\vt[\tilde T≤δ] ~&=~ P_\vt[∃k:k(k+1)\tilde T_k≤δ] ~≤~ ∑_k P_\vt[k(k+1)\tilde T_k≤δ] ~≤~ ∑_k \frac{δ}{k(k+1)} ~=~ δ \nonumber
\end{align}
where we applied the union bound in the first inequality.
If $\tilde T_1,\tilde T_2,\tilde T_3,...$ is 
an effective enumeration of \emph{all} upper semi-computable tests \citep{Li:08},
then $U:=\tilde T$ is a so-called universal test.
No other effective test can be significantly stronger than $U$.
One can show that $U(\v X)=\sup_\vt P_\vt(\v X)/M(\v X)$, 
where $M(\v x)$ is Solomonoff's universal a-priori distribution 
\citep{Solomonoff:64,Hutter:07algprob,Hutter:07uspx,Hutter:17unilearn} 
Another interpretation is that $U$ is a Martin-Löf randomness test \citep{Li:08}
but w.r.t.\ the \emph{class} of iid distributions.
The iid randomness deficiency of $\v x$ can be defined as
$d(\v x):=\inf_\vt\{\log_2 M(\v x)-\log_2 P_\vt(\v x)\}$,
which implies $P_\vt[d(\v X)≥\log_2(1/δ)]≤δ~∀\vt$.

\paragraph{Likelihood Ratio (LR) tests.} 
Any probability distribution $Q$ on $𝓧^n$ can be converted into a uniformized iid test
\begin{align}\label{eq:logliklrt}
  \tilde T(\v X) &:=~ \sup_\vt P_\vt(\v X)/Q(\v X): \\
  P_\vt[\tilde T≤δ] ~&≤~ \textstyle P_\vt[\frac{P_\vt(\v X)}{Q(\v X)}≤δ] ~=~ P_\vt[δ\frac{Q(\v X)}{P_\vt(\v X)}≥1] 
  ~≤~ 𝔼_\vt[δ\frac{Q(\v X)}{P_\vt(\v X)}] ~=~ δ∑_{\v x}P_\vt(\v x)\frac{Q(\v x)}{P_\vt(\v x)} ~=~ δ \nonumber
\end{align}
We can even choose $Q$ to depend on $\vt$
or allow semi-probabilities $∑_{\v x∈𝓧^n}Q(\v x)≤1$.
For $Q(x):=M(x)$ we recover the universal test $U$ above.

\paragraph{Invariant LR tests for finite $𝓧$.} 
We want invariant tests, so $Q$ should only depend on $\v m$.
Let $Q(\v m):=Q[\{\v x:\v m(\v x)=\v m\}]$,
where with slight overload of notation $\v m(\v x)$ denotes the second-order counts of $\v x$.
If $𝓧$ is finite, we can symmetrize any $Q$ via
\begin{align*}
  \bar Q_d(\v x) ~&:=~ \frac{Q(\v m)}{\#\{\v x:\v m(\v x)=\v m\}}, ~~~\text{where}~~~
  \#\{\v x:\v m(\v x) =\v m\} ~=~ \Big({d\atop m_+}\Big)⋅\Big({m_+\atop \v m}\Big)⋅\Big({n\atop\v n}\Big)
\end{align*}
Note that $({n\atop\v n}):=n!/∏_{x=1}^d n_x! = n!/∏_{k=1}^n k!^{m_k}$ also depends on $\v m$ only. The same is true for 
\begin{align}
  \sup_\vt P_\vt(\v x) ~&=~ \prod_{x=1}^d \left(\frac{n_x}{n}\right)^{n_x} ~=~ \prod_{k=1}^n \left(\frac{k}{n}\right)^{k⋅m_k} \nonumber\\
  \text{Thus}~~~ \tilde T_d(\v x) ~&:=~ \frac{\sup_\vt P_\vt(\v x)}{\bar Q_d(\v x)}  \label{eq:ilrtest}
  ~=~ \frac{n!}{Q(\v m)}⋅\left({d\atop m_+}\right)⋅\left({m_+\atop \v m}\right)⋅
  ∏_{k=1}^n \left(\frac{k^k}{n^k k!}\right)^{m_k} 
\end{align}
depends on $\v m$ only. By construction, $\tilde T_d$ is a valid uniformized test \eqref{eq:logliklrt}. 
We are primarily interested in $d=∞$, 
but unfortunately the test becomes vacuous for $d→∞$.
By the argument in \Cref{sec:XtoN},
tests to leading order in $n$ remain valid if we replace infinite $𝓧$
by finite $𝓧$ of size $n^3$, i.e.\ we can use $\tilde T_{n^3}$ but this is very crude.
We can avoid the dependence on $d$ by using $\vt$-dependent $Q$:

\paragraph{\boldmath Invariant LR tests for infinite $𝓧$.} 
Let $𝓧''=\{x:n_x>0\}$ and allow $Q$ to depend on $\vt$ and decompose it as 
\begin{align*}
  Q_\vt(\v x) ~&=~ Q(\v x|\v n,𝓧'',\v m)⋅Q(\v n|𝓧'',\v m)⋅Q(𝓧''|\v m)⋅Q(\v m) \\
              ~&=~ Q(\v x|\v n)~~~~~~~~~⋅Q(\v n|\v m)~~~~⋅Q(𝓧''|m_+)⋅Q(\v m) \\
              ~&=  ~~~\left({n\atop\v n}\right)^{-1}~~~⋅~~~\left({m_+\atop\v m}\right)^{-1}⋅\left(m_+!∏_{x∈𝓧''}θ_x\right)⋅Q(\v m)
\end{align*}
Note that $\v n$ implies $𝓧''$ and $\v m$, 
the other choices in the second line are motivated by invariance.
In the last line we chose uniform probabilities for the first two factors.
For the third factor we chose the true sampling probabilities $θ_x$ of the (unique) symbols in $𝓧''$.
The $m_+!$ is because order plays no role in set $𝓧''$. Note that 
\begin{align*}
  ∑_{\nq𝓧''⊂𝓧:|𝓧''|=m_+}\nq \Big(m_+!∏_{x∈𝓧''}θ_x\Big) ~≤~ ∑_{x_{1:m_+}∈𝓧^{m_+}\nq\nq} θ_{x_1}⋅...⋅θ_{x_{m_+}} ~=~ \bigg(∑_{x∈𝓧} θ_x\bigg)^{m_+} ~=~ 1
\end{align*}
hence $Q(𝓧''|m_+)$ is indeed a valid semi-probability. Combining this with $P_\vt(\v x)=∏_{x∈𝓧''}θ_x^{n_x}$ we get 
\begin{align*}
  \tilde T(\v x) ~:=~ \sup_\vt\frac{P_\vt(\v x)}{Q_\vt(\v x)} 
  ~&=~ \sup_\vt\frac{∏_{x∈𝓧''}θ_x^{n_x-1}}{m_+!⋅Q(\v m)} \left({n\atop\v n}\right)\left({m_+\atop\v m}\right) \\
  ~&=~ \frac{n!({m_+\atop \v m})}{m_+!Q(\v m)}⋅\!∏_{k=1|2}^n \left[\frac1{k!}\left(\frac{k-1}{n-m_+}\right)^{k-1}\right]^{m_k}
\end{align*}
where we used that the maximum is attained at $θ_x=(n_x-1)/(n-m_+)$ and a similar rearranging of terms as in the previous paragraph.
This expression is similar to \eqref{eq:ilrtest} but independent from $d$ as desired.

\paragraph{Combinatorial tests.} 
We have reduced the choice of $T$ to a choice of $Q$, but what have we gained?
Note that we need to ensure $∑_{\v m}Q(\v m)≤1$, 
where the sum is over all valid second-order counts $𝓜:=\{\v m≥\v 0: ∑_{k=1}^n k⋅m_k=n\}$.
Any $Q$ satisfying this constraint results in a valid invariant uniformized test $\tilde T$.
$\text{Part}(n):=|𝓜|$ is the number of partitions of $n$ into a sum of natural numbers without regard to order \citep{Abramowitz:84}.
We could choose $Q(\v m)=\text{Part}(n)^{-1}$ uniformly.
This choice is closely related to the Good-Turing estimator \citep{Hutter:18off2onx}.
Another improved choice would be Ristad's estimator $Q(\v m)=\fr1{m_+}({m_+\atop\v m})/({n\atop m_+})$ \citep{Ristad:95}. 
The former led to essentially vacuous tests, the latter to very weak tests.

\paragraph{Compression tests.} 
Similar to Solomonoff's $M$, $Q(\v m):=2^{-K(\v m|n)}$,
where $K(\v m|n)$ is the prefix Kolmogorov complexity of $\v m$ given $n$,
is a universal distribution leading to a universal and in this case invariant test 
$U(\v m)=2^{K(\v m)}\sup_\vt P_\vt(\v x)$. 
In theory this is (at least asymptotically) the strongest test possible.
In practice, since $K$ is not computable, 
it needs to be approximated by feasible codes.
The task here is to find short prefix-free codes for $\v m$ of length $\text{CL}(\v m|n)$,
and use test $T_\text{CL}(\v m):=2^{\text{CL}(\v m)}\sup_\vt P_\vt(\v x)$. 
We tried a couple of codes such as naive $\text{CL}(m_k)\simequd\log_2(m_k+1)$,
or coding the differences $\text{CL}(m_k-m_{k-1})\simequd\log_2(2|m_k-m_{k-1}|+1)$, and variations thereof.
The resulting tests were weak to vacuous.

\paragraph{Moment method.} 
Let $ϑ:=θ/(1-θ)∈ℝ_0^+$ and $ρ[ϑ≤b]:=∑_x⟦ϑ_x≤b⟧$ be a non-negative measure on $ℝ_0^+$.
Note that $ρ[ℝ_0^+]=∑_{x=1}^d⟦ϑ_x≤∞⟧=d≠1$, i.e.\ $ρ$ is \emph{not} a probability measure.
Now let $P_ν$ be a measure on $ℝ_0^+$ that has density $dP_ν(ϑ)/dρ(ϑ):=nϑ/[(1+ϑ)^n μ_1]$ w.r.t.\ $ρ$.
Then
\begin{align*}
  μ_k  ~=~ 𝔼[M_k] ~&=~ ∑_{x=1}^d \left({n\atop k}\right)θ_x^k(1-θ_x)^{n-k}
  ~=~ ∑_{x=1}^d \left({n\atop k}\right)\frac{ϑ_x^k}{(1+ϑ_x)^n} \\ 
  ~&=~ \left({n\atop k}\right) ∫_0^∞\frac{ϑ^k}{(1+ϑ)^n} dρ(ϑ) 
  ~=~ \frac{μ_1}{n}\left({n\atop k}\right) ∫_0^∞ ϑ^{k-1}dP_ν(ϑ) 
  ~=~ \frac{μ_1}{n}\left({n\atop k}\right) ν_{k-1}
\end{align*}
where $ν_{k-1}:=𝔼_ν[ϑ^{k-1}]$ is the $k-1$st moment of $P_ν$.
Note that $𝔼_ν[1]=ν_0=nμ_1/nμ_1=1$, hence $P_ν$ is indeed a probability measure.
Now given $μ_k$ for $k≥1$ is equivalent to being given $ν_{k-1}$ for $k≥1$ and $μ_1$.
We can therefore ask whether some given sequence of real numbers 
$(ν_{k-1}:=n({n\atop k})μ_k/μ_1:k≥1)$ are moments of some probability measure, say, $P_ν$.
If so, using also $μ_1$ we can (re)construct $ρ$ from it, 
which in turn defines a mixture of Binomial distributions with $ϑ$-distribution $ρ$, 
which in turn defines an iid distribution with $θ$-distribution $ρ[θ/(1-θ)≤b]$.
This iid distribution has expected second-order counts $𝔼[M_k]$ we started with.
($μ_0=∞$ is always true for $d=|𝓧|=∞$, while $μ_0<∞$ requires $d<∞$ and poses strong extra conditions on $ρ$ hence $P_ν$ hence $(ν_k)$.)
If $(ν_k)$ are \emph{not} moments of a probability distribution,
then $(μ_k)$ are \emph{not} expectations from a mixture of Binomial distributions.

In principle we can exploit the above correspondence to develop iid tests.
Unfortunately, the moment problem, inferring a probability measure from moments, is hard.
Furthermore, we do not actually have the moments $μ_k$ but only empirical estimates $M_k$ thereof.
That is, we would need to determine whether $\hat{ν}_{k-1}:=nM_k/({n\atop k})M_1$ are approximately moments.
More precisely, we need tests which tell us whether there exist 
$ν_k∈[\hat{ν_k}±O(\sqrt{\Var[\hat{ν_k}]})]$ for all $k≥1$ 
that are moments of some probability distribution.
If not, we can reject $\Hiid$.

\section{Conclusion}\label{sec:disc}

\paragraph{Summary.}
We developed various tests for the (in)dependence of exchangeable data 
without exploiting or being given any structure in the observation space $𝓧$.
We reduced the problem to $𝓧=ℕ$ which greatly simplified the analysis.
A necessary condition for any invariant test to have power is to observe duplicate items in $x_{1:n}$.
We derived a number of tests based on the observation that the second-order counts $m_k$
are ``smooth'' in $k$ if data are iid, and demonstrated their (lack of) power empirically.
We also presented some alternative ideas for developing tests.

\paragraph{Outlook.}
While we have experimentally verified that our tests have power for some non-iid distributions,
it could be interesting to identify sub-classes of exchangeable distributions
and compute the theoretical power of our tests.
Some of the alternative approaches to developing tests from \Cref{sec:moretests} could be worked out,
esp.\ universal compression-based tests and moment-based tests.
It would be interesting to apply our tests to some real data,
but our invariant/agnostic tests only have power if $x_{1:n}$ contains exact duplicates,
and even then the non-iid signal may be too weak to detect.
In ML practice, we likely need to exploit some structure in the data.
The simplest solution would be to aggregate \emph{similar} $x$ 
into the same category ($𝓧_\text{orig}→𝓧_\text{agg}$).
The tests become more powerful to the extent that this increases the number of duplicates.
To guide the aggregation we need some metric or at least topology on $𝓧$,
and to be effective not just any but ``good'' ones.
Alternatively one could develop tests directly for $𝓧_\text{orig}$ exploiting its structure.
For instance, for $𝓧=ℝ$, 
one could check whether $x_t$ and $x_{t'}$ are correlated, 
e.g.\ for zero-mean $x_t$ whether $\fr1{n^2}∑_{t≠t'}x_t x_{t'}\not\approx 0$,
or higher-order (central) moments $\fr1{n^3}∑_{t,t',t''}x_t x_{t'}x_{t''}\not\approx 0$, etc.
In any case, as discussed in \Cref{sec:app}, this is an AI-complete problem
already for unstructured $𝓧$ and even more so for structured $𝓧$,
without a general clean solution except impractical universal tests.

We have only considered invariant tests. 
We argued that this is a natural choice,
but more convincing arguments would be good.
How limiting is it to only consider invariant tests?
Are there non-invariant tests that have more power on some invariant sub-class of $𝓠$?

\paragraph{Acknowledgements.}
I thank {\em Tor Lattimore} and {\em Bryn Elezedy} for great feedback on earlier drafts.

\fillpagebreak{0.2}
\addcontentsline{toc}{section}{\refname}
\bibliographystyle{alpha}
\begin{small}
\newcommand{\etalchar}[1]{$^{#1}$}

\end{small}

\appendix\fillpagebreak{0.2}
\section{Technical Lemmas}\label{sec:Lemmas}

To derive our tests we require a couple of technical lemmas we derive in this section.
\Cref{lem:ubfe} is heavily used to derive upper bounds on the expected value of our tests.
\Cref{lem:bubtest} is a rather standard way of creating a test of significance $α∈[0;1]$ 
from upper bounds on the mean and variance of an asymptotically Gaussian test statistic. 
Lemmas~\ref{lem:evcrv}\&\ref{lem:Zkxcor} are bounds on the variance of linear combinations of negatively correlated random variables.
\Cref{lem:mdelta} is a version of the delta-method in statistics.
In this section, $P$, $𝔼$, $\Var$, $\W$ are generic, 
and properties of random variables explicitly stated.

\paragraph{A general upper bound for expectations.} 
The following Lemma is the work horse for deriving upper bounds on the expected value of our tests.

\begin{lemma}[\bfm Upper bound for expectations]\label{lem:ubfe}
  Consider $g:ℝ^+→ℝ$ and extend its definition to 
  $g:ℝ_0^+→ℝ∪\{±∞\}$ via $g(0)=\lim\sup_{λ→0}g(λ)$ and $λ_x≥0$ and $∑_x λ_x=n$.
  Then $∑_{x=1}^d g(λ_x) ~≤~ n⋅\sup_{λ>0} g(λ)/λ$. 
  Assuming the maximum is attained and unique, set $λ^*:=\arg\max_{λ>0}g(λ)/λ$.
  The l.h.s.\ is approximately maximized by setting $λ_x≈λ^*$ for $d'≈n/λ^*$ of the $x$ and the remaining $λ_x=0$.
  The maximum is exactly attained if $d≥d'∈ℕ$. 
  For Lipschitz $g$ with $g(0)=0$ and $d'≤d$, the gap in the bound is $O(1)$ (relative gap is $O(1/n)$).
  Otherwise the gap can be unbounded.
\end{lemma}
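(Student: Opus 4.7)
The plan is to prove the three claims in sequence, beginning with the upper bound, which is the core statement.

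\textbf{Main inequality.} First I would reduce the bound to the pointwise estimate
$g(\lambda) \le \lambda \cdot \sup_{\mu>0} g(\mu)/\mu$ for $\lambda>0$, which is tautological by definition of the supremum. Summing over those $x$ with $\lambda_x>0$ immediately yields $\sum_{x:\lambda_x>0} g(\lambda_x) \le (\sum_x \lambda_x)\cdot\sup_{\mu>0} g(\mu)/\mu = n\cdot\sup_{\mu>0}g(\mu)/\mu$. The only subtlety is the contribution of the terms with $\lambda_x=0$: I would observe that if $\sup_{\mu>0} g(\mu)/\mu=:S$ is finite then $g(\mu) \le S\mu$ for all $\mu>0$, so $g(0)=\limsup_{\mu\to 0}g(\mu)\le 0$, and hence those terms only help. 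The case $S=+\infty$ makes the bound vacuous. This handles the inequality cleanly.

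\textbf{Tightness construction.} Next I would exhibit the configuration saturating the bound. With $\lambda^*:=\arg\max_\mu g(\mu)/\mu$ and $d':=n/\lambda^*$, choose $d'$ of the $\lambda_x$ equal to $\lambda^*$ and the rest zero; this is admissible provided $d'\in\mathbb{N}$ and $d\ge d'$. Then $\sum_x g(\lambda_x) = d'\,g(\lambda^*) = (n/\lambda^*)g(\lambda^*) = n\cdot\sup_\mu g(\mu)/\mu$, which matches the upper bound. Conversely, any configuration that fails to place all its mass at $\lambda^*$ loses: if $\lambda_x\ne\lambda^*$, then the pointwise slack $\lambda_x S - g(\lambda_x)>0$ (by uniqueness of the maximizer), so equality forces concentration on $\lambda^*$.

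\textbf{Gap analysis for Lipschitz $g$ with $g(0)=0$.} For the last claim I would handle the typical case $d'\notin\mathbb{N}$ by setting $m:=\lfloor d'\rfloor$ of the $\lambda_x$ to $\lambda^*$, and one residual $\lambda_{m+1}:=n-m\lambda^* \in [0,\lambda^*)$. This gives
\[
  \sum_x g(\lambda_x) \;=\; m\,g(\lambda^*) + g(\lambda_{m+1}) \;=\; n\cdot\frac{g(\lambda^*)}{\lambda^*} - \{d'\}\,g(\lambda^*) + g(\lambda_{m+1}),
\]
so the gap is $\{d'\}g(\lambda^*)-g(\lambda_{m+1})$. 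Using Lipschitz continuity with constant $L$ and $g(0)=0$, both $|g(\lambda^*)|\le L\lambda^*$ and $|g(\lambda_{m+1})|\le L\lambda^*$, so the gap is $O(L\lambda^*)=O(1)$; dividing by $n\cdot g(\lambda^*)/\lambda^*=\Theta(n)$ gives relative error $O(1/n)$. Finally I would note the qualifier ``otherwise unbounded'' by flagging the pathology where $g$ is unbounded near $0$ (so that the $\lambda_x=0$ slots could contribute arbitrarily negative or positive values) or where $g(\lambda^*)$ is enormous.

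\textbf{Main obstacle.} The only real subtlety is the treatment of the boundary $\lambda_x=0$ in the extension $g(0)=\limsup_{\lambda\to 0}g(\lambda)$: one must argue that when the supremum $S$ is finite, $g(0)\le 0$, so the ``zero slots'' do not break the bound, and one must be honest about which regularity assumption (Lipschitz with $g(0)=0$) is actually needed to quantify the fractional-part gap. Everything else is essentially a one-line application of the definition of the supremum and a pigeonhole split between the bulk contribution and the residual.
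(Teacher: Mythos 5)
Your proposal is correct and follows essentially the same route as the paper's proof: the core step is writing $g(\lambda_x)=\frac{g(\lambda_x)}{\lambda_x}\lambda_x\le\lambda_x\sup_{\lambda>0}g(\lambda)/\lambda$ and summing over the nonzero $\lambda_x$, with the zero slots handled via the $\limsup$ extension and the tightness/gap established by the same pigeonhole configuration of $d'\approx n/\lambda^*$ copies of $\lambda^*$. Your treatment of the $\lambda_x=0$ terms (deducing $g(0)\le 0$ directly from finiteness of the supremum, rather than the paper's three-way case split on the sign of $\limsup_{\lambda\to 0}g(\lambda)$) is a minor, equally valid variant.
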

That is, the ``only'' effect of the constraint $∑_x λ_x=n$ is that we maximize 
$g(λ)/λ$ rather than $g(λ)$ and multiply the result with $n$ rather than $d$.
For our core case $d=∞$, $d'≤d$ is satisfied.
For $d<d'<∞$, the bound can be very loose.
We will comment on that when it is due.

\begin{proof}
Consider $L(\vl)=∑_{x=1}^d g(λ_x)$ with $λ_x≥0$ and constraint $∑_x λ_x=n$
and $g:ℝ_0^+→ℝ∪\{±∞\}$ and $g(0)=g(0^+)=0$ (we lift the last assumption at the end of the proof). 
We can bound $L(\vl)$ as follows:
First, all $x$ for which $λ_x=0$ neither contribute to $L$, nor to the constraint,
so we can simply ignore such $x$, and by symmetry replace $d$ by $d':=\#\{x:λ_x>0\}≤d$,
and henceforth assume $λ_x>0$. 
\begin{align*}
  L(\vl) ~=~ ∑_{x=1}^{d'} g(λ_x) ~=~ ∑_{x=1}^{d'} \frac{g(λ_x)}{λ_x}λ_x
         ~≤~ ∑_{x=1}^{d'} \sup_{λ>0}\frac{g(λ)}{λ}λ_x
         ~=~ n⋅\sup_λ \frac{g(λ)}{λ}.
\end{align*}
If the global maximum is attained, choose any maximizer $λ^*=\arg\max_λ g(λ)/λ$. 
If $d'≤d$ and $n/λ^*$ is an integer, 
the upper bound is exact, i.e.\ $\max_\vl L(\vl)=n⋅g(λ^*)$ for $d'=n/λ^*$.
If $n/λ^*$ is not an integer, we can set $d'=\lfloor n/λ^*\rfloor$ 
or $d'=\lceil n/λ^*\rceil$ assuming it does not exceed $d$.
Since we still need to respect $∑_{x=1}^{d'}λ_x=n$,
we either have to adjust all $λ^*$ by at most $±1/n$,
or adjust one $λ_x$ by at most $±1$.
In either case, for Lipschitz $g$, 
the relative slack will be $O(1/n)$ (which we generally ignore).

We now lift the assumption that $g(0)$ is defined and $0$.
Let $c:=\lim\sup_{λ→0}g(λ)∈ℝ∪\{±∞\}$.
If $c=0$, we can simply extend $g$ to $g(0)=0$ and the above proof applies.
If $c>0$, then $\sup_λf(λ)/λ=∞$, and the bound vacuously holds.
If $c<0$, $\lim\sup_{λ→0} ∑_{x=d'+1}^d g(λ)=(d-d')c<0$, 
and hence can be replaced by $0$ for an upper bound.
\qed\end{proof}

\paragraph{On sums and averages and functions of random variables.}
Let $Y$ and $Z$ be generic real-valued random variables.
For a random variable, the corresponding lower greek denotes its expectation, 
e.g.\ $ζ=𝔼[Z]$. Variance is denoted by $σ^2=\Var[Z]$.
and third absolute central moment by $ρ=\W[Z]:=𝔼[|Z-ζ|^3]$.

Let $Z_x$ with $x∈𝓧$ be a collection of independent but typically not identically distributed random variables.
We define $Z_+:=∑_{x∈𝓧} Z_x$, sometimes over other, even uncountable, 
domains as long as only a countable number of $Z_x$ are non-zero.
Furthermore $\bar Z:=Z_+/n$, i.e.\ \emph{the bar always denotes division by $n$}, 
and is \emph{not} the average of all $Z_x$ over $x$.
Mostly $|𝓧|=∞$ anyway, but $Z_x$ and $Z_+$ and $\bar Z$ implicitly depend on the sample size $n$.
We use the $_x$, $+$, and $\bar{}$ convention also for other variables like expectations.
We sometimes drop the $+$ if it does not cause any confusion.
The reason for this convention is that typically $Z_+$ and others scale linearly with $n→∞$,
and $\bar Z$ and others are $Θ(1)$ or even converge to a finite non-zero value for $n→∞$.

\begin{definition}[Sums and ``averages'' of independent random variables]\label{def:sums} 
Let $Z_x$ with $x∈𝓧$ be a collection of independent random variables with sum $Z_+:=∑_x Z_x$ and $\bar Z:=Z_+/n$. 
Provided all involved sums and integrals are absolutely convergent,
\begin{align*}
    & ζ_x:=𝔼[Z_x],                    && σ_x^2:=\Var[Z_x],                       && ρ_x:=\W[Z_x]                    & =Θ(1) \\
    & ζ_+:=\textstyle ∑_x ζ_x=𝔼[Z_+], && \textstyle σ_+^2:=∑_x σ_x^2=\Var[Z_+],  && \textstyle ρ_+:=∑_x ρ_x & =Θ(n) \\
    & \bar{ζ}~:=ζ_+/n=𝔼[\bar Z],      && \bar{σ}^2=σ_+^2/n:=n⋅\Var[\bar Z],      && \bar{ρ}~~=ρ_+/n  & =Θ(1)
\end{align*}
\end{definition}
The last $Θ()$-column is indicative only.
Note that $\bar{σ}^2$ is \emph{not} the variance of $\bar Z$, and $σ_+:=\sqrt{σ_+^2}$ (\emph{not} $∑_x σ_x$).

We make repeated use of the Esseen version of the Berry-Esseen theorem \citep{Esseen:56} with improved constant \citep{Beek:72},
which is a strengthening of the Central Limit Theorem.
\begin{definition}[Standard Normal distribution]\label{def:Gauss}
  Let $Φ(y)=∫_{-∞}^y e^{-x^2/2}dx/\sqrt{2π}$ be the Cumulative Distribution Function (CDF) of the standard Normal.
  Define $z_α:=Φ^{-1}(1-α)$ for significance $α$ (typically $α=0.05$ and $z_{0.05}\dot=1.64$).
  The $p$-value $p:=Φ(-y)≤e^{-y^2/2}/y\sqrt{2π}$, which is sharp (in ratio) for $y→∞$.
\end{definition}

\begin{theorem}[\bfm \boldmath Berry-Esseen: $Z_+\simequd\text{Gauss}(ζ_+,σ_+^2)$ and $\bar Z\simequd\text{Gauss}(\bar{ζ},\bar{σ}^2/n)$]\label{thm:esseen}
With the notation above, let $F$ be the distribution function of 
$Y:=(Z_+-ζ_+)/σ_+=\sqrt{n}(\bar Z-\bar{ζ})/\bar{σ}$,
then $\sup_y|F(y)-Φ(y)|≤ρ_+/σ_+^3=\bar{ρ}/\sqrt{n}\bar{σ}^3$,
i.e.\ if $\bar{ρ}/\sqrt{n}\bar{σ}^3→0$, then $Y$ converges in distribution to a standard Normal.
\end{theorem}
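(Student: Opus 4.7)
The statement is the classical non-uniform Berry--Esseen inequality for independent (not identically distributed) summands, so my plan is to follow the standard Fourier-analytic route going back to Esseen \cite{Esseen:56} with the refined constant of Beek \cite{Beek:72}, rather than to reinvent it. Let $Y_x:=(Z_x-\zeta_x)/\sigma_+$ so that $Y:=\sum_x Y_x=(Z_+-\zeta_+)/\sigma_+$ has mean $0$, variance $1$, and $\sum_x\mathbb{E}|Y_x|^3=\rho_+/\sigma_+^3$. The target bound is $\sup_y|F(y)-\Phi(y)|\le\rho_+/\sigma_+^3$; once established, substituting the definitions of $\bar{\rho},\bar{\sigma}$ from \Cref{def:sums} gives the $\bar{\rho}/(\sqrt{n}\bar{\sigma}^3)$ form, and convergence in distribution when this vanishes follows from the Portmanteau theorem.

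The main tool is Esseen's smoothing lemma: for any distribution function $F$ with characteristic function $\varphi_Y$, and any $T>0$,
\begin{equation*}
  \sup_y|F(y)-\Phi(y)| ~\le~ \frac1\pi\int_{-T}^{T}\left|\frac{\varphi_Y(t)-e^{-t^2/2}}{t}\right|dt ~+~ \frac{c}{T},
\end{equation*}
for a universal constant $c$ depending on $\sup_y\Phi'(y)$. The strategy is then to (i) write $\varphi_Y(t)=\prod_x\varphi_{Y_x}(t)$ using independence, (ii) Taylor-expand each $\log\varphi_{Y_x}(t)$ to third order, using $\mathbb{E}[Y_x]=0$ and $\sum_x\mathbb{E}[Y_x^2]=1$, so that $\log\varphi_Y(t)=-t^2/2+R(t)$ with $|R(t)|\lesssim |t|^3\rho_+/\sigma_+^3$ in the regime $|t|\le c'\sigma_+^3/\rho_+$, (iii) exponentiate and compare with $e^{-t^2/2}$, and (iv) choose $T$ proportional to $\sigma_+^3/\rho_+$ so that both contributions in the smoothing lemma balance. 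This yields $\sup_y|F(y)-\Phi(y)|\le C\rho_+/\sigma_+^3$ for some absolute $C$.

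The hard part, and the reason I would ultimately cite Beek rather than re-derive it, is squeezing the constant $C$ down to $1$ (or indeed down to the currently known best value). Getting \emph{some} absolute constant out of the above scheme is routine; optimizing it requires a careful separate treatment of the central range $|t|\le\sigma_+/\rho_+^{1/2}$ (where the Taylor error controls things) versus the tails $|t|$ up to $T$ (where one needs a non-trivial bound on $|\varphi_Y(t)|$ via a truncation argument on the $Y_x$'s), together with sharp numerical estimates in the smoothing lemma. Since the quoted constant $1$ is exactly Beek's refinement of Esseen's argument for the non-identically-distributed case, I would state the theorem and simply cite \cite{Esseen:56,Beek:72}, noting that every application of \Cref{thm:esseen} in this paper uses it only in the qualitative form ``error $=O(\bar{\rho}/\sqrt{n}\bar{\sigma}^3)$'', so the exact constant is inessential downstream.
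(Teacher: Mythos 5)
Your approach matches the paper's: Theorem~\ref{thm:esseen} is stated there without proof, as a direct citation of Esseen's theorem with Beek's constant, which is exactly what you propose to do after sketching the standard Fourier-analytic argument. Your sketch of the smoothing-lemma route and your observation that the constant is inessential downstream (only the $O(\bar{ρ}/\sqrt{n}\bar{σ}^3)$ form is used) are both correct, so there is nothing to add.
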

The following lemma is a version of the multivariate delta-method in statistics \citep{Wasserman:10}.
The statements follows directly from a second-order Taylor series expansion of $g(\bar Z)$ around $\bar{ζ}$ and a multivariate version of the theorem above \citep{Raic:19}.
The result implies that Gaussian confidence intervals or $p$-values for $g(\bar Z)$ are asymptotically correct.

\begin{lemma}[\bfm \boldmath Multivariate delta method: $g(\v{\bar Z})\simequd\text{Gauss}(g(\v{\bar{ζ}}),\Var{[{\v{\bar Z}}^\trp ∇g(\v{\bar{ζ}})]})$]\label{lem:mdelta} 
With the notation above, except that $\v{\bar Z}=\fr1n∑_x\v{Z}_x∈ℝ^d$ can be vector-valued,
with $\v{\bar{ζ}}=𝔼[\v{\bar Z}]$ and 
$g:ℝ^d→ℝ$ a twice differentiable function with $∇g(\v{\bar{ζ}})≠0$ and
$σ_{\!g}^2:=∇g(\v{\bar{ζ}})^\trp\!\Var[\v{\bar Z}] ∇g(\v{\bar{ζ}})=\Var[\v{\bar Z}^\trp ∇g(\v{\bar{ζ}})]$
and $F$ the distribution function of $Y:=\sqrt{n}(g(\v{\bar Z})-g(\v{\bar{ζ}}))/σ_{\!g}$.
Then $\sup_y|F(y)-Φ(y)|=O(\bar{ρ}/\sqrt{n}\bar{σ}^3)$, 
where $\bar{σ}^2:=n⋅𝔼[||\v{\bar Z}-\v{\bar{ζ}}||_2^2]$ and $\bar{ρ}=\fr1n∑_x 𝔼[||\v Z_x-\v{ζ}_x||_2^3]$.
\end{lemma}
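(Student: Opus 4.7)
The plan is to reduce the multivariate statement to the scalar Berry--Esseen bound of \Cref{thm:esseen} via a first-order Taylor expansion of $g$ around $\v{\bar{ζ}}$, and then handle the Taylor remainder by a standard smoothing argument that is uniform in $y$. First I would write
\[
  g(\v{\bar Z})-g(\v{\bar{ζ}}) ~=~ \nabla g(\v{\bar{ζ}})^\trp(\v{\bar Z}-\v{\bar{ζ}}) \,+\, R,\qquad |R|~\le~\tfrac12 B\,\|\v{\bar Z}-\v{\bar{ζ}}\|_2^2,
\]
valid on any set where the operator norm of the Hessian of $g$ is at most $B$. The linear part
$L:=\nabla g(\v{\bar{ζ}})^\trp(\v{\bar Z}-\v{\bar{ζ}})=\tfrac1n\sum_x \nabla g(\v{\bar{ζ}})^\trp(\v Z_x-\v{ζ}_x)$
is a sum of \emph{independent, mean-zero scalar} random variables whose variance equals $σ_{\!g}^2$ by the identity in the statement, and which is nondegenerate since $\nabla g(\v{\bar{ζ}})\neq 0$. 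Applying \Cref{thm:esseen} to these scalar summands and bounding their third absolute moment by $\|\nabla g(\v{\bar{ζ}})\|_2^3\,𝔼\|\v Z_x-\v{ζ}_x\|_2^3$ via Cauchy--Schwarz yields
\[
  \sup_y\bigl|\,P[L/σ_{\!g}\le y]-Φ(y)\bigr| ~=~ O\!\left(\bar{ρ}/\sqrt{n}\,\bar{σ}^3\right),
\]
with the $g$-dependent constants absorbed into $O(\cdot)$. This already gives the target rate for the \textbf{linearised} problem.

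Next I would control the remainder. The identity $𝔼\|\v{\bar Z}-\v{\bar{ζ}}\|_2^2=\bar{σ}^2/n$ together with Markov gives $\|\v{\bar Z}-\v{\bar{ζ}}\|_2=O_P(\bar{σ}/\sqrt n)$, so $R=O_P(\bar{σ}^2/n)$. To upgrade this pointwise-in-probability smallness to a uniform Kolmogorov bound, I would use the usual sandwich: for any $\varepsilon>0$,
\[
  P[(L+R)/σ_{\!g}\le y] ~\le~ P[L/σ_{\!g}\le y+\varepsilon]\,+\,P[|R|/σ_{\!g}>\varepsilon],
\]
with a symmetric lower bound; since $Φ$ is $1/\sqrt{2π}$-Lipschitz, $|Φ(y+\varepsilon)-Φ(y)|\le\varepsilon/\sqrt{2π}$. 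One then tunes $\varepsilon$ to balance the Berry--Esseen term, the Gaussian shift $\varepsilon$, and the Chebyshev tail on $\|\v{\bar Z}-\v{\bar{ζ}}\|_2^2$, recovering a uniform error of order $\bar{ρ}/\sqrt n\bar{σ}^3$.

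The main obstacle is making the remainder contribution not dominate the leading Berry--Esseen rate: a naive second-moment Chebyshev bound on $\|\v{\bar Z}-\v{\bar{ζ}}\|_2^2$ only yields $O(n^{-1/4})$ after optimising $\varepsilon$, which is too weak. Three ways to sharpen: (i) use a fourth-moment concentration $𝔼\|\v{\bar Z}-\v{\bar{ζ}}\|_2^4=O(1/n^2)$, which follows from independence of the $\v Z_x$ and the finite third moments encoded in $\bar{ρ}$, giving a much tighter Chebyshev tail; (ii) restrict the Hessian bound $B$ to a shrinking ball around $\v{\bar{ζ}}$ in which $\v{\bar Z}$ lies with probability $1-o(1/\sqrt n)$; or (iii) bypass the scalar route entirely by invoking the multivariate Berry--Esseen theorem of \citet{Raic:19} directly on $\v{\bar Z}$ to obtain half-space closeness to $\text{Gauss}(\v{\bar{ζ}},\Var[\v{\bar Z}])$, then note that $\{g(\v{\bar Z})\le t\}$ differs from the half-space $\{\nabla g(\v{\bar{ζ}})^\trp\v z\le c(t)\}$ only by a curved shell of thickness $O(\|\v{\bar Z}-\v{\bar{ζ}}\|_2^2)$ whose Gaussian measure is $O(1/\sqrt n)$. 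Either refinement pushes the remainder below $\bar{ρ}/\sqrt n\bar{σ}^3$ and completes the proof.
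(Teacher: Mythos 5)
The paper does not actually prove this lemma: its entire argument is the sentence preceding the statement, namely that the result ``follows directly from a second-order Taylor series expansion of $g(\v{\bar Z})$ around $\v{\bar\zeta}$ and a multivariate version of [Berry--Esseen] \cite{Raic:19}''. Your option (iii) is therefore precisely the paper's route, while your primary (scalar) route is a genuinely different and more explicit one: linearize, apply the scalar \Cref{thm:esseen} to $L:=\nabla g(\v{\bar\zeta})^\trp(\v{\bar Z}-\v{\bar\zeta})$, and absorb the Taylor remainder by a sandwich argument. The linear half of your argument is sound: Cauchy--Schwarz on the third moments and the variance identity give the advertised $O(\bar\rho/\sqrt{n}\bar\sigma^3)$ for $L$, with the $g$-dependent (and non-degeneracy) constants hidden in the $O(\cdot)$ exactly as the paper hides them.

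The gap is in the remainder step, and neither fix (i) nor (ii) closes it. What the sandwich forces you to control is $\inf_{\varepsilon>0}\{\varepsilon+P[|R|/\sigma_g>\varepsilon]\}$, and since $|R|/\sigma_g$ is itself \emph{typically of order} $n^{-1/2}$ (not $o(n^{-1/2})$), reaching the claimed rate requires $P[|R|/\sigma_g>Kn^{-1/2}]=O(n^{-1/2})$ for some constant $K$. A fourth-moment Chebyshev bound only gives $O(1/K^2)$ there, and optimizing $\varepsilon$ yields $O(n^{-1/3})$, not $O(n^{-1/2})$; moreover, the asserted $\mathbb{E}\|\v{\bar Z}-\v{\bar\zeta}\|_2^4=O(1/n^2)$ does \emph{not} follow from finite third moments (the diagonal terms of the expansion need finite fourth moments). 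Shrinking the ball in (ii) only changes the Hessian constant $B$, not the tail of $\|\v{\bar Z}-\v{\bar\zeta}\|_2^2$, so it does not help either. To get the full $n^{-1/2}$ rate along the scalar route you need either randomized concentration inequalities for nonlinear statistics (Chen--Shao type, which handle the dependence between $L$ and $R$ and give an error of order $\bar\rho/\sqrt{n}\bar\sigma^3+\mathbb{E}|R|/\sigma_g$), or boundedness of the $\v Z_x$ (which does hold in the paper's application, where $\v Z_x\in\{0,1\}^3$, and then costs only a $\log n$ factor). Your option (iii) --- Ra\v{i}c's multivariate theorem on half-spaces plus the curved-shell comparison, truncated to a ball of radius $O(\sigma\sqrt{\log n})$ --- does work up to the same logarithmic factor and is what the paper actually invokes.
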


\paragraph{Upper-bound test.} 
We will develop various independence tests for $\v X=(X_1,...,X_n)$ based on applications of the basic test below.
It is a rather standard way of creating a test of significance $α∈[0;1]$ 
from upper bounds on the mean and variance of an approximately Gaussian test statistic,
except possibly that it is in terms of infinite sums of random variables with finite total mean and variance. 

\begin{lemma}[\bfm Basic upper-bound test for independent random variables]\label{lem:bubtest} 
Consider the hypothesis $H_\text{ind}$ that $Z_x$ for $x∈𝓧$ are independent
random variables with known upper bounds $𝔼[Z_+]≤ζ_+^{ub}$ and $σ_+^2:=\Var[Z_+]≤𝔼[V^{ub}]$, 
where $Z_+:=∑_x Z_x$, and $\Var[V^{ub}]≤O(ρ_+^2/σ_+^2)$ for some $V^{ub}≡n\bar V^{ub}$, and $ρ_+:=\W[Z_+]$.
\begin{enumerate}\parskip=0ex\parsep=0ex\itemsep=0ex
\item[(i)] At significance level $α$,
if $Z_+>ζ_+^{ub}+z_α\sqrt{V^{ub}}$ 
we can reject $H_\text{ind}$ with confidence $\simequd1-α$ (typically $α=0.05$ and $z_{0.05}\dot=1.64$), where $z_α:=Φ^{-1}(1-α)$.
\item[(ii)] The $p$-value is upper bounded by $p\lesssim\tilde T:= Φ((ζ_+^{ub}-Z_+)/\sqrt{V^{ub}})$, provided $Z_+>ζ_+^{ub}$.
That is, we can reject $H_\text{ind}$ if $p≤α$.
\item[(iii)] The above test and $p$-value are approximate with accuracy $O_P(ρ_+/σ_+^3)$ ($O_P((b-a)/σ_+)$ if $Z_x∈[a;b]$),
conservative for $σ_+^3/ρ_+→∞$, and asymptotically exact only if also $𝔼[Z_+]→ζ_+^{ub}$ and $𝔼[V^{ub}]→σ_+^2$. 
\item[(iv)] If $n\bar Z:=Z_+>ζ_+^{ub}=:n\bar{ζ}^{ub}$, then $p\lesssim Φ(\sqrt{n}(\bar{ζ}^{ub}-\bar Z)/\sqrt{\bar V^{ub}})≤\exp(-\fr12 n(\bar Z-\bar{ζ}^{ub})^2/\bar V^{ub})$ 
for sufficiently large $n$ (and $\bar Z$ and $\bar{ζ}^{ub}$ and $\bar V^{ub}$ fixed or $Θ(1)$).
\item[(v)] If $Z_x≤b$ and $\Var[V^{ub}]=0$, then $p≤\exp\{-\fr12 n(\bar Z-\bar{ζ}^{ub})^2/[\bar V^{ub}+b(\bar Z-\bar{ζ}^{ub})/3n]\}$ provided $\bar Z-\bar{ζ}^{ub}$.
\end{enumerate}
\end{lemma}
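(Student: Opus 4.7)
The plan is to invoke the Berry--Esseen theorem (Theorem~\ref{thm:esseen}) on the independent sum $Z_+ = \sum_x Z_x$, substitute the supplied upper bounds $\zeta_+^{ub} \geq \zeta_+$ and $V^{ub} \gtrsim \sigma_+^2$ to render the resulting Gaussian test conservative, and then recover the exponential refinements (iv) and (v) from standard Gaussian-tail and Bernstein estimates.

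\textbf{Step 1 (parts (i)--(iii)).} Theorem~\ref{thm:esseen} supplies
\begin{align*}
P[Z_+ - \zeta_+ > \sigma_+ y] \;=\; \Phi(-y) \;\pm\; O(\rho_+/\sigma_+^3).
\end{align*}
Since $\zeta_+ \leq \zeta_+^{ub}$ and $\sigma_+^2 \leq \mathbb{E}[V^{ub}]$, the event $\{Z_+ > \zeta_+^{ub} + y\sqrt{\mathbb{E}[V^{ub}]}\}$ is contained in $\{(Z_+-\zeta_+)/\sigma_+ > y\}$, so $p \lesssim \Phi\bigl((\zeta_+^{ub} - Z_+)/\sqrt{\mathbb{E}[V^{ub}]}\bigr)$ whenever $Z_+ > \zeta_+^{ub}$, proving (ii); specialising to $Z_+ = \zeta_+^{ub} + z_\alpha\sqrt{V^{ub}}$ so that the right-hand side equals $\alpha$ yields (i). The accuracy in (iii) is just the Berry--Esseen remainder, and for bounded $Z_x \in [a;b]$ the pointwise estimate $|Z_x - \zeta_x|^3 \leq (b-a)(Z_x - \zeta_x)^2$ gives $\rho_+ \leq (b-a)\sigma_+^2$, hence the simplified $\rho_+/\sigma_+^3 \leq (b-a)/\sigma_+$.

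\textbf{Step 2 (parts (iv)--(v)).} Part (iv) drops out of (ii) and the elementary bound $\Phi(-y) \leq e^{-y^2/2}$ (Definition~\ref{def:Gauss}) applied with $y = \sqrt{n}(\bar Z - \bar\zeta^{ub})/\sqrt{\bar V^{ub}}$, valid once $n$ is large enough that the $O_P(\rho_+/\sigma_+^3)$ Berry--Esseen remainder is absorbed by the exponential. Part (v) bypasses the CLT: when $Z_x \leq b$ and $V^{ub}$ is deterministic, Bernstein's inequality gives
\begin{align*}
P[Z_+ - \zeta_+ \geq t] \;\leq\; \exp\!\left(-\frac{t^2/2}{\mathbb{E}[V^{ub}] + bt/3}\right),
\end{align*}
and substituting $t = n(\bar Z - \bar\zeta^{ub}) \leq Z_+ - \zeta_+$ together with $\mathbb{E}[V^{ub}] \leq V^{ub} = n\bar V^{ub}$ produces the stated form.

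\textbf{Main obstacle.} The delicate ingredient in Step~1 is the passage from the deterministic $\mathbb{E}[V^{ub}]$ to the random $V^{ub}$ inside the denominator, which is where the variance hypothesis $\Var[V^{ub}] \leq O(\rho_+^2/\sigma_+^2)$ earns its keep: Chebyshev produces $|V^{ub} - \mathbb{E}[V^{ub}]| = O_P(\rho_+/\sigma_+)$, and since $\mathbb{E}[V^{ub}] \geq \sigma_+^2$ one obtains $\sqrt{V^{ub}/\mathbb{E}[V^{ub}]} = 1 \pm O_P(\rho_+/\sigma_+^3)$. This relative error is of the same order as the Berry--Esseen remainder, so the two approximations can be collapsed into a single $\lesssim$ with the accuracy advertised in (iii). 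Beyond this bookkeeping, (i)--(ii) are immediate and (iv)--(v) are routine consequences of classical concentration inequalities.
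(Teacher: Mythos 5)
Your proposal is correct and follows essentially the same route as the paper's own proof: Berry--Esseen for the Gaussian approximation of $Z_+$, substitution of the upper bounds $ζ_+^{ub}≥ζ_+$ and $V^{ub}\gtrsim σ_+^2$ for conservativeness, Chebyshev to control the random $V^{ub}$ at relative accuracy $O_P(ρ_+/σ_+^3)$, the bound $ρ_+≤(b-a)σ_+^2$ for bounded summands, the Gaussian tail estimate for (iv), and Bernstein's inequality for (v). The only cosmetic difference is that you derive (i) as a corollary of (ii) rather than directly, which is immaterial.
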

The bound in terms of $Φ$ is sharper than the exponential bounds in $(iv)$ and $(v)$. 
$Φ$ is used in our experiments;
$(iv)$ is more convenient for analytical/asymptotic analysis in our toy examples;
$(v)$ theoretically pleasing since it is non-asymptotic, 
i.e.\ exact if using theoretical (non-stochastic, non-empirical) upper bounds on $\Var[Z_+]$.

\begin{proof}
Let $ζ_x:=𝔼[Z_x]$ and $σ_x^2:=\Var[Z_x]$.
Let $Z_+:=∑_x Z_x$ be their sum with $𝔼[Z_+]=ζ_+$ and $\Var[Z_+]=σ_+^2$. 
Let $Y:=(Z_+-ζ_+)/σ_+$ with CDF $F$, where $σ_+:=\sqrt{σ_+^2}$.
\\
{\bf(i)}
$P[Z_+>ζ_+^{ub}+z_α\sqrt{V^{ub}}] \lesssim P[Z_+>ζ_+ +z_ασ_+] = P[Y>z_α] = 1-F(z_α) \simequd 1-Φ(z_α) = α.$ \\
The first $\lesssim$ follows from $ζ_+^{ub}≥ζ_+$ and is an exact inequality if $V^{ub}≥\Var[Z_+]$.
For random $V^{ub}$, we have 
\begin{align*}
  \textstyle V^{ub} ~=~ 𝔼[V^{ub}]-O_P(\sqrt{\Var[V^{ub}]}) ~≥~ σ_+^2-O_P(ρ_+/σ_+) ~=~ σ_+^2(1-O_P(ρ_+/σ_+^3))
\end{align*}
i.e.\ $\lesssim$ holds to relative accuracy $O(ρ_+/σ_+^3)$.
The approximate equality $\simequd$ holds, since $Y$ is approximately Normal:
$\sup_y |F(y)-Φ(y)|=O(ρ_+/σ_+^3)$ by the (Berry-)Esseen theorem.
\\
{\bf(ii\&iv)} For $Z_+>ζ_+$ we have \vspace{-3ex}
\begin{align*} 
  p ~&=~ 1-F(Y) ~\simequd~ Φ(-Y) ~=~ Φ\left(\frac{ζ_+-Z_+}{σ_+}\right) ~\lesssim~ Φ\left(\frac{ζ_+^{ub}-Z_+}{\sqrt{V^{ub}}}\right) \\
  ~&=~ Φ\left(\sqrt{n}\frac{\bar{ζ}^{ub}-\bar Z}{\sqrt{\bar V^{ub}}}\right) ~≤~ \exp\left(-n\frac{(\bar Z-\bar{ζ}^{ub})^2}{2\bar V^{ub}}\right) 
\end{align*}
\\
{\bf(iii)} The general accuracy has already been shown in (i).
For bounded $Z_x$ we have 
\begin{align*}
  \textstyle ρ_+ ~=~ ∑_x 𝔼[|Z_x-ζ_x|^3] ~≤~ (b-a)∑_x 𝔼[(Z_x-ζ_x)^2] ~=~ (b-a)∑_x σ_x^2 ~=~ (b-a)σ_+^2
\end{align*}
If the upper bounds are exact ($ζ_+=ζ_+^{ub}$ and $σ_+^2=𝔼[V^{ub}]$) and the relative variance of $V^{ub}$ tends to zero ($V^{ub}/σ_+^2→1$),
all approximations in (i) and (ii) become (asymptotically) exact.
\\
{\bf(v)} 
Since $Z_x≤ζ_x+b$, Bernstein's (one-sided) inequality applied to $Z_x-ζ_x$ gives
\begin{align*}
  P[Z_+-ζ_+≥t] ~=~ P\Big[∑_x(Z_x-ζ_x)≥t\Big] ~≤~ \exp\left(-\frac{t^2/2}{∑_x σ_x^2+bt/3}\right)
\end{align*}
For bounding the $p$-value, we replace $t$ by the observed $Z_+-ζ_+$:
\begin{align*}
  p ~≤~ \exp\left(-\frac{(Z_+-ζ_+)^2/2}{σ_+^2+b(Z_+-ζ_+)/3}\right)
  ~≤~ \exp\left(-\frac{(Z_+-ζ_+^{ub})^2/2}{V^{ub}+b(Z_+-ζ_+^{ub})/3}\right)
\end{align*}
where the last inequality is true since the expression 
can be shown to be monotone increasing in $ζ_+$ and $σ_+^2$ provided $Z_+≥ζ_+^{ub}$.
\qed\end{proof}

\paragraph{Negatively correlated random variables.}
We need bounds on the variance of linear combinations of negatively correlated random variables.
For positive combinations, the non-diagonal covariance terms can simply be dropped,
but we need bounds for mixed signed combinations. 
Luckily the correlations are sufficiently weak to get weaker but still useable upper bounds.

\begin{lemma}[\bfm Expectation and variance of correlated random variables.]\label{lem:evcrv} 
Let $Z_k$ be correlated random variables. 
Provided all involved sums and integrals are absolutely convergent, \\
$𝔼[(∑_k α_k Z_k)^2]~=~∑_k α_k^2 𝔼[Z_k^2]$ ~~~~~if~~~ $𝔼[Z_k Z_{k'}]=0$ $∀k≠k'$. \\
$𝔼[(∑_k α_k Z_k)^3]~=~∑_k α_k^3 𝔼[Z_k^3]$ ~~~~~if~~~ $𝔼[Z_k Z_{k'} Z_{k''}]=0$ $∀k≠k'≠k''≠k$. \\
$\Var[∑_k α_k Z_k]~~~~≤~∑_k α_k^2\Var[Z_k]$ ~~~~~if~~~ $Cov[Z_k,Z_{k'}]≤0$ and $α_k≥0$ $∀k≠k'$.
\end{lemma}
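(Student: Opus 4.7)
The plan is to prove each identity by direct expansion of the power or bilinear form, interchanging expectation with (possibly infinite) summation via Fubini--Tonelli, which is justified throughout by the absolute-convergence hypothesis. The first identity follows from
\[
\Bigl(\sum_k \alpha_k Z_k\Bigr)^{\!2} \;=\; \sum_k \alpha_k^2 Z_k^2 \;+\; \sum_{k\neq k'} \alpha_k\alpha_{k'}\, Z_k Z_{k'},
\]
taking expectations termwise and killing every off-diagonal contribution with the hypothesis $\mathbb{E}[Z_k Z_{k'}]=0$. The third identity is similarly immediate from the standard bilinear expansion
\[
\Var\!\Bigl[\sum_k \alpha_k Z_k\Bigr] \;=\; \sum_k \alpha_k^2 \Var[Z_k] \;+\; \sum_{k\neq k'} \alpha_k\alpha_{k'}\,\Cov[Z_k,Z_{k'}],
\]
where under $\alpha_k\geq 0$ and $\Cov[Z_k,Z_{k'}]\leq 0$ each off-diagonal summand is non-positive and can be discarded to yield an upper bound.

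For the second identity I would expand $(\sum_k \alpha_k Z_k)^3$ and partition the resulting triple sum by the multiset type of $(k,k',k'')$: three equal indices contribute the claimed diagonal $\sum_k \alpha_k^3 \mathbb{E}[Z_k^3]$; exactly two distinct indices contribute terms proportional to $\mathbb{E}[Z_k^2 Z_{k'}]$ with $k\neq k'$; three fully distinct indices contribute $\mathbb{E}[Z_k Z_{k'} Z_{k''}]$, which is eliminated directly by the stated hypothesis. The ``two-equal'' stratum is where the real work sits: it vanishes under the natural reading of the hypothesis as asserting that \emph{not all three indices coincide}, or equivalently under independence together with $\mathbb{E}[Z_k]=0$, or by adjoining the part-one hypothesis $\mathbb{E}[Z_k Z_{k'}]=0$ and zero-meanness of the $Z_k$.

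The main obstacle is precisely this interpretive gap in part two: literally, the condition $k\neq k'\neq k''\neq k$ only covers strictly distinct triples, yet the identity additionally requires the mixed products $\mathbb{E}[Z_k^2 Z_{k'}]$ for $k\neq k'$ to vanish. I would resolve this by stating explicitly which enlargement of the hypothesis is intended (the ``not all equal'' reading is the minimal sufficient one), after which the proof is entirely algebraic bookkeeping on polynomial expansions. No analytic estimate is needed beyond the absolute-convergence assumption that licenses the Fubini step, and no other step of the three parts presents any difficulty.
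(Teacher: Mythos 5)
Your proposal is correct and takes essentially the same route as the paper: the paper's own proof is a one-line expansion of the square, cube, and bilinear covariance form, with off-diagonal terms killed by the hypotheses, exactly as you do for parts one and three. Your observation about part two is a genuine and worthwhile catch rather than a digression: the literal hypothesis $k\neq k'\neq k''\neq k$ only annihilates the stratum of pairwise-distinct triples, while the expansion of the cube also produces terms $3\alpha_k^2\alpha_{k'}\,\mathbb{E}[Z_k^2 Z_{k'}]$ with $k\neq k'$, which the paper's proof silently discards; the identity therefore requires the ``not all three indices equal'' reading of the hypothesis, as you say. In the paper's only application (\Cref{lem:Zkxcor}, with $Z_k^x=\llbracket N_x=k\rrbracket$ indicators of disjoint events, so $Z_k Z_{k'}=0$ pointwise for $k\neq k'$) the stronger condition holds automatically, so nothing downstream breaks, but your explicit flagging of the required enlargement of the hypothesis is the more careful statement.
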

\begin{proof}\vspace{-5ex}
\begin{align*}
  \textstyle 𝔼[(∑_k α_k Z_k)^2] ~& =~ \textstyle ∑_{k,k'}α_k α_{k'} 𝔼[Z_k Z_{k'}] ~=~ ∑_k α_k^2 𝔼[Z_k^2] \\
  \textstyle 𝔼[(∑_k α_k Z_k)^3] ~& =~ \textstyle ∑_{k,k',k''}α_k α_{k'} α_{k''} 𝔼[Z_k Z_{k'} Z_{k''}] ~=~ ∑_k α_k^3 𝔼[Z_k^3] \\
  \textstyle \Var[∑_k α_k Z_k] ~&=~ \textstyle ∑_{k,k'}α_k α_{k'}\Cov[Z_k,Z_{k'}] ~≤ ∑_k α_k^2\Var[Z_k] \\[-8ex]
\end{align*}\vspace{-3ex}
\qed\end{proof}

\begin{lemma}[\bfm Double collection of (un)correlated random variables.]\label{lem:Zkxcor} 
Let $α_k∈ℝ$ and $Z_k^x$ be a double collection of random variables and $Z_k^+:=∑_x Z_k^x$.
Assume all involved sums and integrals below are absolutely convergent. 
Then \\
{\bf(a)} If $Z_k^x∈[0;1]$ and $Z_k^x$ are
uncorrelated in $x$, i.e.\ $\Cov[Z_k^x,Z_{k'}^{x'}]=0$ for all $x≠x'$ and $k,k'$,
and additionally $𝔼[Z_k^x Z_{k'}^x]=0$ $∀k≠k'$. 
Then $\Var[∑_k α_k Z_k^+]~≤~∑_k α_k^2 𝔼[Z_k^+]$.\\
{\bf(b)} If $Z_k^x∈[0;1]$ 
and $𝔼[Z_k^x Z_{k'}^x Z_{k''}^x]=0$ $∀k≠k'≠k''≠k$. 
Then $ρ_+:=∑_x\W[∑_k α_k Z_k^x]~≤~8∑_k |α_k|^3 𝔼[Z_k^+]$.\\
{\bf(c)} If $Z_k^x∈ℝ$ and $α_k≥0~∀k$ and $\Cov[Z_k^x,Z_{k'}^{x'}]≤0$ if $k≠k'$,\\
then $\Var[∑_k α_k Z_k^+]~≤~∑_k α_k^2\Var[Z_k^+]$ 
\end{lemma}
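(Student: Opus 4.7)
The plan is to treat the three parts separately, each time reducing a statement about $Z_k^+=\sum_x Z_k^x$ to a per-$x$ statement about $Y_x:=\sum_k\alpha_k Z_k^x$, exploiting the (non-positive) cross-$x$ covariance hypotheses to kill the $x\neq x'$ terms.

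For part (a), I would first write $\Var[\sum_k\alpha_k Z_k^+]=\Var[\sum_x Y_x]=\sum_{x,x'}\Cov[Y_x,Y_{x'}]$, expand each $\Cov[Y_x,Y_{x'}]=\sum_{k,k'}\alpha_k\alpha_{k'}\Cov[Z_k^x,Z_{k'}^{x'}]$, and drop every $x\neq x'$ term using the hypothesis that cross-$x$ covariances vanish, giving $\sum_x\Var[Y_x]$. Within a single $x$ I would bound $\Var[Y_x]\leq\mathbb{E}[Y_x^2]$ and invoke Lemma~\ref{lem:evcrv} (second-moment case) with its hypothesis $\mathbb{E}[Z_k^x Z_{k'}^x]=0$ for $k\neq k'$ to collapse $\mathbb{E}[Y_x^2]=\sum_k\alpha_k^2\mathbb{E}[(Z_k^x)^2]$. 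Finally, $Z_k^x\in[0,1]$ implies $(Z_k^x)^2\leq Z_k^x$, and summing over $x$ recovers $\sum_k\alpha_k^2\mathbb{E}[Z_k^+]$.

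For part (b), let $\tilde Y_x:=\sum_k|\alpha_k|Z_k^x\geq 0$ and $\tilde\mu_x:=\mathbb{E}[\tilde Y_x]$. Since $Z_k^x\geq 0$, the triangle inequality gives $|Y_x|\leq\tilde Y_x$, whence $|Y_x-\mu_x|\leq\tilde Y_x+\tilde\mu_x$. Cubing and using the convex-combination bound $(a+b)^3\leq 4(a^3+b^3)$ for $a,b\geq 0$, followed by Jensen's inequality $\tilde\mu_x^3\leq\mathbb{E}[\tilde Y_x^3]$, yields $\mathbb{E}[|Y_x-\mu_x|^3]\leq 4(\mathbb{E}[\tilde Y_x^3]+\tilde\mu_x^3)\leq 8\,\mathbb{E}[\tilde Y_x^3]$. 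Expanding $\tilde Y_x^3$ gives $\sum_{k,k',k''}|\alpha_k||\alpha_{k'}||\alpha_{k''}|\mathbb{E}[Z_k^x Z_{k'}^x Z_{k''}^x]$, and the triple-product hypothesis annihilates all terms in which $k,k',k''$ are pairwise distinct. In the intended application the $Z_k^x=\llbracket N_x=k\rrbracket$ are indicators of disjoint events, so the pairwise products $Z_k^x Z_{k'}^x$ already vanish for $k\neq k'$, which kills the remaining ``two equal'' cross terms and leaves only the diagonal $\sum_k|\alpha_k|^3\mathbb{E}[(Z_k^x)^3]\leq\sum_k|\alpha_k|^3\mathbb{E}[Z_k^x]$, since $(Z_k^x)^3\leq Z_k^x$. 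Summing over $x$ turns $\mathbb{E}[Z_k^x]$ into $\mathbb{E}[Z_k^+]$ and delivers the factor-$8$ bound.

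For part (c), expand $\Var[\sum_k\alpha_k Z_k^+]=\sum_{k,k'}\alpha_k\alpha_{k'}\Cov[Z_k^+,Z_{k'}^+]$ and further $\Cov[Z_k^+,Z_{k'}^+]=\sum_{x,x'}\Cov[Z_k^x,Z_{k'}^{x'}]$. For $k\neq k'$ every summand is $\leq 0$ by hypothesis, so $\Cov[Z_k^+,Z_{k'}^+]\leq 0$; combined with $\alpha_k\alpha_{k'}\geq 0$ every off-diagonal ($k\neq k'$) term in the outer sum is non-positive, so dropping them only enlarges the right-hand side, leaving the diagonal $\sum_k\alpha_k^2\Var[Z_k^+]$. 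The main obstacle is the accounting in part (b): one must hit the clean constant $8$ (not $16$) by using Jensen on $\tilde\mu_x^3$ after the cube-convexity step, and one must justify dropping the ``exactly two equal'' terms in the cube expansion. Under a literal reading of the triple-product hypothesis these mixed terms contribute $\mathbb{E}[(Z_k^x)^2 Z_{k''}^x]$, which the bare hypothesis cannot eliminate; the cleanest remedy, matching the paper's sole application of the lemma, is the disjoint-indicator structure of $Z_k^x$, which forces pairwise products to vanish as well.
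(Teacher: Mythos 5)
Your proof is correct and follows essentially the same route as the paper's: kill the cross-$x$ terms using the (un)correlatedness hypotheses, apply the moment identities of \Cref{lem:evcrv} within each $x$, and use $Z_k^x\in[0;1]$ to reduce powers to first moments, the factor $8$ being exactly the standard bound $\W[Z]\leq 8\,\mathbb{E}[|Z|^3]$ that you re-derive via $(a+b)^3\leq 4(a^3+b^3)$ and Jensen. Your remark that the literal triple-product hypothesis in (b) does not annihilate the ``exactly two indices equal'' terms in the cube expansion is a fair catch---the paper's own proof has the same blemish, inherited from \Cref{lem:evcrv}, and is likewise rescued only by the disjoint-indicator structure $Z_k^x=\llbracket N_x=k\rrbracket$ of the intended application (or by reading the hypothesis as ``not all three indices equal'').
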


\begin{proof}(a)\vspace{-4ex}
\begin{align*}
  ~~~~\textstyle \Var[∑_k α_k Z_k^x] ~≤~ 𝔼[(∑_k α_k Z_k^x)^2]
  ~=~ ∑_k α_k^2 𝔼[(Z_k^x)^2] ~≤~ ∑_k α_k^2 𝔼[Z_k^x]
\end{align*}
where the equality follows from \Cref{lem:evcrv}. 
This and independence of $Z_k^x$ and $Z_{k'}^{x'}$ for $x≠x'$ implies 
\begin{align*}
  \textstyle \Var[∑_k α_k Z_k^+] ~&=~ 
  \textstyle \Var[∑_k α_k ∑_x Z_k^x] ~=~ 
  ∑_x\Var[∑_k α_k Z_k^x] \\
  ~&≤~ \textstyle ∑_x ∑_k α_k^2 𝔼[Z_k^x] ~=~ ∑_k α_k^2 𝔼[Z_k^+]
\end{align*}
(b) The proof follows a similar structure as (a) but with $\Var$ replaced by $\W$ and $α_k^2$ by $|α_k|^3$ and using $\W[Z]≤8𝔼[|Z^3|]$: 
\begin{align*}
  \textstyle \fr18 ρ_x ~&:=~ \fr18\W[∑_k α_k Z_k^x] ~≤~ 𝔼[|∑_k α_k Z_k^x|^3] \\
  ~&≤~ 𝔼[(∑_k|α_k||Z_k^x|)^3] ~=~ ∑_k |α_k|^3 𝔼[|Z_k^x|^3] ~≤~ ∑_k |α_k|^3 𝔼[Z_k^x]
\end{align*}
where the equality follows from \Cref{lem:evcrv}. This implies 
\begin{align*}
  \textstyle \fr18 ρ_+ ~≡~ \fr18 ∑_x ρ_x ~≤~ ∑_x ∑_k |α_k|^3 𝔼[Z_k^x] ~=~ ∑_k |α_k|^3 𝔼[Z_k^+]
\end{align*}
(c)\vspace{-4ex}
\begin{align*}
  \textstyle \Var[∑_k α_k Z_k^+] ~&=~ \textstyle ∑_{k,k'}α_k α_{k'}∑_{x,x'}\Cov[Z_k^x,Z_{k'}^{x'}] \\
  ~&≤~ \textstyle ∑_k α_k^2∑_{x,x'}\Cov[Z_k^x,Z_k^{x'}] ~=~ ∑_k α_k^2\Cov[Z_k^+,Z_k^+] \\[-8ex]
\end{align*}
\qed\end{proof}
While the following lemma is not needed to show that any of our current tests are asymptotically Normal,
we state it here for future use. 
It shows that certain unbounded $Z_x\widehat=T_x$ also satisfy the condition in \Cref{lem:bubtest}(iii).
\begin{lemma}[\bfm Upper bounds on moments]\label{lem:ubmom}
  For $Z_k^x:=M_k^x:=⟦N_x=k⟧$, hence $Z_k^+=M_k$, and $T_x:=∑_k α_k M_k^x$
  and $T_+=∑_k α_k M_k$, $τ_x:=𝔼[T_x]$, $σ_x^2:=\Var[T_x]$, $ρ_x:=\W[T_x]$,
  following the notational convention of \Cref{def:sums}, we have \\
  $τ_+:=𝔼[T_+]≤c⋅n$ and $𝔼[\bar T]=\bar{τ}≤c$ ~~if~~ $α_k≤c⋅k$ \\
  $σ_+^2:=\Var[T_+]≤c⋅n$ and $\Var[\bar T]=\bar{σ}^2/n≤c/n$ ~~if~~ $α_k^2≤c⋅k$ \\
  $ρ_+:=∑_x \W[T_x]≤8c⋅n$ and $∑_x \W[\bar T_x]=\bar{ρ}/n^2≤8c/n^2$ ~~if~~ $|α_k|^3≤c⋅k$
\end{lemma}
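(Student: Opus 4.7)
The plan is to exploit the fact that for each fixed $x$, the indicators $M_k^x = ⟦N_x=k⟧$ are mutually exclusive across $k$ (at most one is nonzero), so that $T_x = ∑_k α_k M_k^x = α_{N_x}$. Under the Poisson model $P_\vl$, $N_x\sim\text{Poisson}(λ_x)$, hence for any function $f$, $𝔼[f(T_x)] = 𝔼[f(α_{N_x})] = ∑_k f(α_k)\,P_{λ_x}(k)$. Each of the three claims then reduces to a one-parameter Poisson moment calculation followed by summation over $x$ using $∑_x λ_x = n$ and independence of the $N_x$ under $P_\vl$.

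For $τ_+$: from $𝔼[T_x] = ∑_k α_k P_{λ_x}(k)$, the hypothesis $α_k ≤ c\,k$ together with the elementary Poisson identity $∑_k k\,P_{λ}(k) = λ$ gives $𝔼[T_x] ≤ c\,λ_x$. Summing, $τ_+ = ∑_x 𝔼[T_x] ≤ c∑_x λ_x = c\,n$, hence $\bar{τ} = τ_+/n ≤ c$.

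For the variance, independence of the $N_x$ (and hence of the $T_x$) gives $σ_+^2 = ∑_x \Var[T_x] ≤ ∑_x 𝔼[T_x^2]$ after dropping the non-negative $-𝔼[T_x]^2$ term. Since $T_x^2 = α_{N_x}^2$, the same template applies with $α_k^2 ≤ c\,k$ in place of $α_k ≤ c\,k$, yielding $𝔼[T_x^2] ≤ c\,λ_x$ and therefore $σ_+^2 ≤ c\,n$ and $\Var[\bar T] = σ_+^2/n^2 ≤ c/n$.

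For the absolute third central moment, I would first establish the general inequality $\W[Y] ≤ 8\,𝔼[|Y|^3]$ via the convexity bound $|a+b|^3 ≤ 4(|a|^3 + |b|^3)$ applied to $a = Y$, $b = -𝔼Y$, combined with Jensen's $|𝔼Y|^3 ≤ 𝔼[|Y|^3]$. Then $\W[T_x] ≤ 8\,𝔼[|T_x|^3] = 8∑_k |α_k|^3 P_{λ_x}(k) ≤ 8c\,λ_x$ under $|α_k|^3 ≤ c\,k$, and summing yields $ρ_+ = ∑_x \W[T_x] ≤ 8c\,n$. The only mildly non-routine step is this factor-$8$ inequality $\W[Y] ≤ 8\,𝔼[|Y|^3]$; once the key collapse $T_x = α_{N_x}$ is in hand, the rest is a one-line manipulation of Poisson moments, so I expect no real obstacle.
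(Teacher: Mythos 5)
Your proof is correct and follows essentially the same route as the paper: the collapse $T_x=\alpha_{N_x}$ (mutual exclusivity of $\{N_x=k\}$ over $k$), independence of the $N_x$ under $P_{\vl}$, and $\sum_k k\,P_{\lambda_x}(k)=\lambda_x$ with $\sum_x\lambda_x=n$ are exactly the ingredients the paper packages into \Cref{lem:Zkxcor}(a,b), which its three-line proof simply cites; you merely re-derive those inline, including the factor-$8$ bound $\W[Y]\le 8\,\mathbb{E}[|Y|^3]$ that the paper uses without proof.
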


\begin{proof}
  $𝔼[T_+]=𝔼[∑_k α_k M_k]≤c⋅𝔼[∑_k k M_k]=c⋅𝔼[N]=c⋅n$ \\
  \Cref{lem:evcrv}a implies $\Var[T_+]=\Var[∑_k α_k Z_k^+]≤∑_k α_k^2𝔼[Z_k^+]≤c⋅∑_k k⋅𝔼[M_k]=c⋅n$ \\
  \Cref{lem:evcrv}b implies $ρ_+=∑_x\W[∑_k α_k Z_k^x]≤8∑_k |α_k|^3 𝔼[Z_k^+]≤8c⋅∑_k k⋅𝔼[M_k]=8cn$
\qed\end{proof}
Finally, we need the standard Stirling approximation:
\begin{lemma}[\bfm Stirling approximation]\label{lem:stirling} 
$\ln n!=n\ln\fr{n}{e}+\ln\sqrt{2πn}+O(\fr1n)$ or more precisely
\begin{align}\label{eq:stirling}
  \frac{1-\dot{ε}_k}{\sqrt{2πk}} ~:=~ {k^k e^{-k}\over k!} ~~~\text{with}~~~ e^{-1/12k}≤1-\dot{ε}_k≤e^{-1/(12k+1)}
\end{align}
\end{lemma}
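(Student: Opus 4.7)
The plan is to use Robbins's classical refinement of Stirling's formula. Define $d_n := \ln n! - (n+\tfrac12)\ln n + n$; then the claimed inequality $e^{-1/12k} ≤ 1-\dot{ε}_k ≤ e^{-1/(12k+1)}$ is, after taking logs and rearranging, exactly the two-sided bound
\begin{align*}
\tfrac{1}{12n+1} \;≤\; d_n - \tfrac12\ln(2π) \;≤\; \tfrac{1}{12n}.
\end{align*}
So the whole task reduces to sandwiching $d_n - \tfrac12\ln(2π)$.

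First I would analyze the telescoping increment $d_n - d_{n+1} = (n+\tfrac12)\ln\tfrac{n+1}{n} - 1$. Writing $\ln\tfrac{n+1}{n} = \ln\tfrac{1+1/(2n+1)}{1-1/(2n+1)} = 2\sum_{j≥ 0}\tfrac{1}{(2j+1)(2n+1)^{2j+1}}$ converts this into a strictly positive series
\begin{align*}
d_n - d_{n+1} \;=\; \sum_{j≥ 1}\tfrac{1}{(2j+1)(2n+1)^{2j}},
\end{align*}
so $d_n$ is strictly decreasing. A careful geometric comparison then sandwiches this series by telescoping differences:
\begin{align*}
\tfrac{1}{12n+1} - \tfrac{1}{12(n+1)+1} \;≤\; d_n - d_{n+1} \;≤\; \tfrac{1}{12n} - \tfrac{1}{12(n+1)}.
\end{align*}

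Next I would sum from $n$ to $\infty$: both outer sides telescope, giving $\tfrac{1}{12n+1} ≤ d_n - d_\infty ≤ \tfrac{1}{12n}$, where $d_\infty := \lim_n d_n$ exists because $d_n$ is decreasing and the upper telescoping estimate bounds it from below. The identification $d_\infty = \tfrac12\ln(2π)$ then follows from Wallis's product $\tfrac{π}{2} = \lim_n \tfrac{2^{4n}(n!)^4}{[(2n)!]^2(2n+1)}$: substituting $n! = e^{d_n} n^{n+1/2} e^{-n}$ into both numerator and denominator and passing to the limit forces $e^{2 d_\infty} = 2π$.

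The main obstacle is the upper half of the sandwich, which must yield exactly $12n+1$ rather than the easier $12(n+1)$ that one gets by keeping only the leading $j=1$ term $\tfrac{1}{3(2n+1)^2}$. To absorb the tail $\sum_{j≥ 2}\tfrac{1}{(2j+1)(2n+1)^{2j}}$ into the sharper odd denominator, one has to match it against the partial-fraction identity $\tfrac{1}{12n+1} - \tfrac{1}{12(n+1)+1} = \tfrac{12}{(12n+1)(12n+13)}$, which I would handle by a direct coefficient-by-coefficient geometric bound on $1/(2j+1) ≥ 1/(3\cdot 5^{j-1})$ or similar. Once both telescoping bounds are established, exponentiating $\tfrac{1}{12k+1} ≤ \ln k! - (k+\tfrac12)\ln k + k - \tfrac12\ln(2π) ≤ \tfrac{1}{12k}$ and rearranging gives the stated bounds on $1-\dot{ε}_k = k^k e^{-k}\sqrt{2πk}/k!$ immediately, and the crude leading-order form $\ln n! = n\ln\tfrac{n}{e} + \ln\sqrt{2πn} + O(1/n)$ drops out as the first-order expansion of these bounds.
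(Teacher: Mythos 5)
Your proposal is correct: it is the classical Robbins refinement of Stirling's formula, and all the steps check out. The reduction of the stated bound $e^{-1/12k}\le 1-\dot{ε}_k\le e^{-1/(12k+1)}$ to the sandwich $\tfrac{1}{12k+1}\le d_k-\tfrac12\ln(2π)\le\tfrac{1}{12k}$ is exact, the series identity for $d_n-d_{n+1}$ is right, the upper telescoping bound follows from $\tfrac{1}{2j+1}\le\tfrac13$ and summing the geometric series to get $\tfrac{1}{12n(n+1)}$, and the delicate lower bound indeed goes through with a comparison such as $\tfrac{1}{2j+1}\ge\tfrac{1}{3^{j}}$ (or your $\tfrac{1}{3\cdot 5^{j-1}}$), after which the cross-multiplication against $\tfrac{12}{(12n+1)(12n+13)}$ reduces to a true linear inequality in $n$. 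The Wallis-product identification of $d_\infty=\tfrac12\ln(2π)$ is also standard and correct. Note, however, that the paper supplies no proof of this lemma at all: it is quoted as a known classical fact (with the remark that $0\le\dot{ε}_k\le 1/12k$), so there is no authorial argument to compare against. Your write-up is simply a self-contained derivation of that classical result, which is a strictly more complete treatment than the paper gives.
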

We make frequent use of this representation/approximation. 
Rapidly $0≤\dot{ε}_k≤1/12k→0$, but for most practical purposes simply setting $\dot{ε}_k=0$ or its lower bound even for $k=1$ should be fine.
We used the exact expression in the experiments, but the asymptotic one provides more insight.

\section{I.I.D.\ Tests for Multinomial Distribution}\label{secm:iidtests}

Here we develop tests analogs to those in \Cref{sec:iidtests}
but without the Poisson approximation, i.e.\ directly for iid $\v X$ i.e.\ multinomial $\v N$.
The derivations for the upper bounds on the expectations of the test statistics $T$ are structurally very similar.
Since this section closely mirrors \Cref{sec:iidtests}, we only point out the differences, 
and refer to \Cref{sec:iidtests} for explanation of various steps and detailed explanations and discussion.
Upper bounding the variances is significantly more complicated,
and is deferred to \Cref{secm:Lemmas}.
$P$, $𝔼$, $\Var$, $\W$ are w.r.t.\ the multinomial distribution $P_\vt$ \eqref{eq:multinom}.

For large $n$, the basic functions $g(θ)≈f^n(λ)$ and Figure~\ref{fig:fT} looks virtually unchanged,
just with $λ$ replaced by $nθ$. 
The $p$-value expressions are also unchanged,
except now in terms of $Φ$ instead of $Φ_n$, i.e.\ without fudge factor $c_n$,
and the upper bounds for $𝔼[T]$ derived here are slightly different.
\Cref{secm:Lemmas} also shows that under certain conditions, $\Var_\vt[T]≈\Var_\vl[T]$.
The running example of duplicate data items is also unchanged,
even the specific constants remain the same for $n→∞$.

\paragraph{The general idea behind the tests.}
Like the Poisson, the binomial \eqref{eq:binom} 
is also ``smooth'' in $k$ and $θ$, has a unique maximum at $k≈nθ$, 
is log-concave with small slope and curvature, 
so is also a rather benign function.
Indeed it is (also) approximately Gaussian with mean $nθ$ and variance $nθ(1-θ)$.
Analogous to \eqref{eq:EMk}, consider
\begin{align}\label{eqm:EMk}
  𝔼[M_k] ~=~ ∑_x P_{\vt}[N_x=k] ~=~ ∑_x P_{θ_x}(k) ~=~ ∑_x f_k^n(θ_x) ~=~ ∑_x \left({n\atop k}\right)θ_x^k(1-θ_x)^{n-k}
\end{align}
\Cref{prop:ublt} stays nearly the same.
Since we use it repeatedly we (re)state it here in terms of $P_\vl$.
The proof is the same with the obvious substitutions of $P_\vt$ instead of $P_\vl$ 
and $∑_x θ_x=1$ instead of $∑_x λ_x=n$, which explains the ``missing'' factor $n$.

\begin{proposition}[\bfm Multinomial upper bounds for linear tests]\label{propm:ublt} 
  Let $T=∑_k α_k M_k$ for $α_k∈ℝ$.
  Provided all involved sums and integrals are absolutely convergent, we have \\
  $τ:=𝔼[T]≤\sup_{θ>0} f(θ)/θ =:τ^{ub}$,
  where $f(θ):=∑_k α_k P_θ(k)=({n\atop k})θ^k(1-θ)^{n-k}$, and \\
  $\Var[T]≤∑_k α_k^2 𝔼[M_k]≤V^{ub}$, 
  where $V^{ub}:=∑_k α_k^2 μ_k^{ub}$ 
  with $μ_k^{ub}≥𝔼[M_k]$ upper bounding the expectations of $M_k$.
\end{proposition}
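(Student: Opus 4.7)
The plan is to mimic the proof of the Poisson analogue (Proposition~\ref{prop:ublt}) under two substitutions: replace the independent-Poissons law $P_\vl$ by the multinomial $P_\vt$, and replace the constraint $\sum_x \lambda_x = n$ by $\sum_x \theta_x = 1$. All rearrangements of sums will be legitimate by the absolute-convergence hypothesis in the statement.

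For the expectation bound, I will first interchange summations to write $\mathbb{E}[T] = \sum_k \alpha_k \mathbb{E}[M_k] = \sum_k \alpha_k \sum_x P_{\theta_x}(k) = \sum_x f(\theta_x)$, using that each $N_x$ is marginally binomial under $P_\vt$. Then I will apply Lemma~\ref{lem:ubfe} to the single-variable function $\theta \mapsto f(\theta)$: the proof of that lemma only uses the total value of the sum constraint on its arguments (via $\sum_x g(\lambda_x) = \sum_x (g(\lambda_x)/\lambda_x)\lambda_x \le \sup_\lambda (g(\lambda)/\lambda)\cdot\sum_x \lambda_x$), so substituting $1$ for $n$ in that chain immediately yields $\sum_x f(\theta_x) \le \sup_{\theta>0} f(\theta)/\theta$. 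This is precisely what explains the absence of the factor $n$ that was present in the Poisson statement.

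For the variance bound I will set $Z_k^x := \llbracket N_x = k\rrbracket$, so $Z_k^+ = M_k$ and $T = \sum_x T_x$ with $T_x := \sum_k \alpha_k Z_k^x = \alpha_{N_x}$. Since $Z_k^x Z_{k'}^x = 0$ for $k \ne k'$ and $Z_k^x \in \{0,1\}$, a single-index calculation gives $\Var[T_x] \le \mathbb{E}[T_x^2] = \sum_k \alpha_k^2 P_{\theta_x}(k)$. Summing over $x$ and interchanging will then produce the claimed right-hand side $\sum_k \alpha_k^2 \mathbb{E}[M_k]$, provided the cross-index contributions $\Cov[T_x, T_{x'}]$ can be shown to aggregate non-positively; the remaining inequality $\sum_k \alpha_k^2 \mathbb{E}[M_k] \le V^{ub}$ is then immediate from the hypothesis $\mathbb{E}[M_k] \le \mu_k^{ub}$.

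The main obstacle is exactly this cross-index step. In the Poisson setting the $N_x$ are independent and Lemma~\ref{lem:Zkxcor}(a) closes the argument in a single line, but under $P_\vt$ the counts are coupled by $\sum_x N_x = n$, so the multinomial only enjoys negative association in the monotone sense, and individual indicator covariances $\Cov[\llbracket N_x = k\rrbracket, \llbracket N_{x'} = k'\rrbracket]$ need not be non-positive for $k, k' \ge 1$. I will therefore defer this technical point to the dedicated variance lemmas of \Cref{secm:Lemmas}, which either transfer the clean Poisson bound via the Poissonization identity $P_\vt(\v n) = P_\vl(\v n)/P_\vl[N_+ = n]$ or manipulate the joint multinomial probabilities directly; every other step of the argument is the verbatim analogue of the Poisson derivation.
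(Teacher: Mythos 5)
Your expectation bound is exactly the paper's argument: the paper's entire ``proof'' of \Cref{propm:ublt} is the one-line remark that the proof of \Cref{prop:ublt} carries over with $P_\vt$ in place of $P_\vl$ and $∑_x θ_x=1$ in place of $∑_x λ_x=n$, which is precisely your rescaled application of \Cref{lem:ubfe} and correctly explains the missing factor $n$.

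On the variance, however, you have put your finger on something the paper's ``same proof'' remark glosses over, and your caution is warranted rather than a defect of your write-up. The Poisson proof closes via \Cref{lem:Zkxcor}(a), whose hypothesis $\Cov[Z_k^x,Z_{k'}^{x'}]=0$ for $x≠x'$ is supplied by independence of the $N_x$ under $P_\vl$; under $P_\vt$ the counts are coupled by $∑_x N_x=n$, and, as you note, the indicator covariances $\Cov[\llbracket N_x=k\rrbracket,\llbracket N_{x'}=k'\rrbracket]$ are not sign-controlled for $k,k'≥1$, so the verbatim transfer fails. The paper itself concedes this in \Cref{secm:Lemmas} (``we were able to derive upper bounds for $\Var_\vl[T]$ \dots by exploiting independence \dots but not for $\Var_\vt[T]$, since $\Cov_\vt≠0$'') and only recovers the bound in the weakened, approximate form $\Var_\vt[T]\lesssim 𝔼_\vt[T]$ of \Cref{claim:VtvsVl}, under additional conditions on $k$ and $\sup_x θ_x$ and stated only as a Claim. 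So deferring the cross-index covariances to \Cref{secm:Lemmas} is the honest resolution, and the exact inequality $\Var[T]≤∑_k α_k^2𝔼[M_k]$ as printed in the Proposition is not actually established there in full generality; your proposal is faithful to what the paper can and does prove.
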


\paragraph{\boldmath Second-order count tests $M_k$.} 
\begin{align}\label{eqm:fknub}
  f_k^n(θ) ~&≤~ \textstyle f_k^n\big(\frac{k}{n}\big) ~≤~ \sqrt{\frac{n}{2πk(n-k)}} 
  ~=~ \frac{1}{\sqrt{2πk}}\left[1+O(\frac{k}{n})\right], ~~~\text{hence}~~~ \\ \label{eqm:Mkub}
  μ_k ~&:=~ 𝔼[M_k] ~≤~ \fr{n}{k} f_{k-1}^{n-1}(\fr{k-1}{n-1}) =: μ_k^{ub} 
  ~≤~ \textstyle \frac{n}{k}\sqrt{\frac{n-1}{2π(k-1)(n-k)}} 
  ~=~ \fr{n}{k\sqrt{2π(k-1)}}[1+O(\fr{k}{n})]
\end{align}
These expressions very similar to \eqref{eq:Mkub} just for $f_{k-1}^{n-1}(θ)$ and 
maximizer $θ^*=\fr{k-1}{n-1}$ (cf.\ Figure~\ref{fig:fT}).
In \Cref{secm:Lemmas} we show that under certain conditions,
$\Var[M_k]≤𝔼[M_k]$ remains approximately valid also for $P_θ$, 
hence 
\begin{align*}
  p ~&\lesssim~ \textstyle Φ((μ_k^{ub}-M_k)/\sqrt{μ_k^{ub}}) \tc{\\}
  ~\tc{&}≤~ \exp(-\fr12 n(\bar M_k-\bar{μ}_k^{ub})^2/\bar{μ}_k^{ub}) ~=~ e^{-O(n/k^{3/2})}
\end{align*}
now without the fudge factor $c_n$. The same is true for the other tests.

\paragraph{\boldmath Even and odd tests $E$ and $O$.} 
In \Cref{sec:iidtests} we explained why we need to exclude $M_0$ and $M_1$ from $E$ and $O$.
In addition, for $θ_x=1/d$ and $d→∞$, every $x$ is observed exactly once, hence $M_1=n$ and all other $M_k=0$,
also not leading to a useful test, so we also need to exclude $M_n$,
i.e.\ $α_k^\text{even}:=k⋅⟦0≠k≠n~\text{even}⟧$ and $α_k^\text{odd}:=k⋅⟦1≠k≠n~\text{odd}⟧$.
Let 
\begin{align*}
  f_\text{even}(θ) ~&:=~ ∑_k α_k^\text{even} P_θ(k) ~=~ ∑_{\nq 0≠k≠n~\text{even}\nq} k⋅f_k^n(θ) \tc{\\}
  ~\tc{&}=~ \fr12 n θ[1-(1-2θ)^{n-1}-2θ^{n-1}⟦n~\text{even}⟧] ~≤~ \fr12 n θ \\
  f_\text{odd}(θ) ~&:=~ ∑_{\nq 1≠k≠n~\text{odd}\nq} k⋅f_k^n(θ) \tc{\\}
  ~\tc{&\hspace*{-9ex}}=~ \fr12 n θ[1-2(1-θ)^{n-1}+(1-2θ)^{n-1}-2θ^{n-1}⟦n~\text{odd}⟧] ~≤~ \fr12 n θ
\end{align*}
The equalities follow from binomial identities. 
The expressions in the brackets have the form $[1-h(θ)]$ for $2×2$ different functions $h(θ)$:
For $f_\text{even}$ and $f_\text{odd}$ and for even and odd $n$.
In one case $h(θ)≥0$ is trivial. 
In the other 3 cases this follows by finding the minimum $θ^*=\fr13|\fr12|\fr23$ resp.\ via $dh(θ)/dθ=0$ and showing $h(θ^*)≥0$.
This establishes the upper bounds.
The remaining definitions, derivations, and arguments are the same as in \Cref{sec:iidtests}.

\paragraph{\boldmath Slope tests $D_k:=M_k-M_{k-1}$.} 
For the slope test we have
\begin{align*}
  δ_k ~:=~ 𝔼[D_k] ~&=~ ∑_x P_{θ_x}[N_x=k]-P_{θ_x}[N_x=k-1] \tc{\\}
  ~\tc{&}=~ ∑_x f_δ(θ_x) ~≤~ \sup_{θ>0}\smash{\frac{f_δ(θ)}{θ}} ~=:~ n\bar{δ}_k^{ub} \\
  \ntc{\text{where}~~~ &} f_δ(θ):=f_k^n(θ)\tc{&}-f_{k-1}^n(θ) ~=~ ({\textstyle{n+1\atop k}})θ^{k-1}(1-θ)^{n-k}[θ-\fr{k}{n+1}]
\end{align*}
The last expression follows from inserting \eqref{eq:binom} and elementary algebra.
The maximum of $P_θ(k)$ is at $θ=k/n$ but the bracket $[θ-\frac{k}{n+1}]$ kills this maximum,
moving it to $\simequd k+\sqrt{k}$ (Figure~\ref{fig:fT}). 
As in \Cref{sec:iidtests} we can find it exactly by differentiating 
\begin{align*}
  \tc{&\ln(\fr1{θ}f_δ(θ)) = c + (k-2)\ln θ + (n-k)\ln(1-θ) + \ln(θ-\fr{k}{n+1}) \\}
  \ntc{\ln(f_δ(θ)/θ) ~&=~ \ln({\textstyle{n+1\atop k}}) + (k-2)\ln θ + (n-k)\ln(1-θ) + \ln(θ-\fr{k}{n+1}) \\}
  \tc{&}\textstyle \frac{d}{dθ}\ln(f_δ(θ)/θ) ~\ntc{&}=~ \textstyle \frac{k-2}{θ} - \frac{n-k}{1-θ} + \frac1{θ-k/(n+1)} \tc{\\}
  ~\tc{&}∝~ -[(n^2-1)θ^2+(k-2kn+n+1)θ+k^2-2k] ~\stackrel!=0
\end{align*}
The last expression follows from multiplication with $θ(1-θ)[(n+1)θ-k]$ and rearranging terms.
This is a quadratic equation in $θ$ with solution
\begin{align*}
  \tc{\nq} θ^* ~&=~ \frac{2kn-k-n-1+\sqrt{k^2(5-4n)+k(4n^2-2n-6)+(n+1)^2}}{2(n^2-1)} \tc{\\}
  ~\tc{&}=~ \frac{k-\fr12}{n}+\frac{\sqrt{k+\fr14}}{n}+O\bigg(\frac{k^{3/2}}{n^2}\Big)
\end{align*}
which is indeed the global maximum.
\begin{align}\label{eqm:Dkub}
  \bar{δ}_k^{ub} ~=~ \frac{f_δ(θ^*)}{nθ^*} ~=~ \frac{1-O(1/\sqrt{k})}{k^2\sqrt{2πe}}
\end{align}
The remainder is the same as in \Cref{sec:iidtests} with $Φ_n\leadsto Φ$ and $\Var_\vt[D_k]≈\Var_\vl[D_k]≤...$.

\paragraph{\boldmath Linear curvature tests $C_k:=2M_k-M_{k-1}-M_{k+1}$.} 
Let
\begin{align*}
  f_γ(θ) ~&:=~ 2P_θ(k)-P_θ(k-1)-P_θ(k+1) \tc{\\}
  ~\tc{&}=~ f_k^n(θ)\left[2-\frac{k}{n-k+1}\frac{1-θ}{θ}-\frac{n-k}{k+1}\frac{θ}{1-θ}\right]
\end{align*}
be the negative curvature of $P_θ$. Unlike in \Cref{sec:iidtests} we cannot maximize $f_γ(θ)/θ$
exactly anymore, but the following upper bound is quite tight (cf.\ Figure~\ref{fig:fT})
\begin{align*}
  \sup_{θ>0}[f_γ(θ)/θ] ~&≤~ \sup_{θ>0}\{f_k^n(θ)/θ\}⋅\max_{z>0}[2-α/ϑ-βϑ] \tc{\\}
  ~\tc{&}=~ μ_k^{ub}[2-2\sqrt{αβ}]
\end{align*}
where $ϑ:=\frac{θ}{1-θ}$, $α=\frac{k}{n-k+1}$, $β=\frac{n-k}{k+1}$, and $\max_z$ is attained at $ϑ^2=α/β$.
Hence 
\begin{align}
  n\bar{γ} ~&:=~ 𝔼[C_k] ~=~ ∑_x f_γ(θ_x) ~≤~ \max_{θ>0}\frac{f_γ(θ)}{θ} \nonumber \\
  ~&≤~ n\bar{γ}_k^{ub} ~:=~ μ_k^{ub}\Big[2-2\sqrt{\fr{k}{k+1}\fr{n-k}{n-k+1}}\Big] \label{eqm:Ckub}\tc{\\} 
  ~\tc{&\nonumber}≤~ \frac{μ_k^{ub}}{(k+\fr12)(1-\fr{k}{n+1/2})} ~≈~ \frac{n}{k^2\sqrt{2πk}} 
\end{align}
where $≈$ holds for $n\gg k\gg 1$.
The remainder is the same as in \Cref{sec:iidtests}.

\paragraph{\boldmath Logarithmic curvature tests $\bar U_k:=2\ln M_k-\ln M_{k-1}-\ln M_{k+1}$.} 
The derivation is the same as in \Cref{sec:iidtests} with minimal changes:
With $\tilde P_{\vt,k}[X=x] ~:=~ f_k^n(θ_x)/μ_k$ and $ϑ_x:=θ_x/(1-θ_x)$ we get
\begin{align*}
  \frac{μ_{k+1}}{μ_k} ~&=~ \frac1{μ_k}∑_x f_{k+1}^n(θ_x) 
    ~=~ \frac1{μ_k}∑_x \frac{n-k}{k+1}ϑ_x f_k^n(θ_x) \tc{\\}
    ~\tc{&}=~ \frac{n-k}{k+1}⋅\tilde{𝔼}_{\vt,k}[ϑ_X] \\
  \frac{μ_{k-1}}{μ_k} ~&=~ \frac1{μ_k}∑_x f_{k-1}^n(θ_x) 
    ~=~ \frac1{μ_k}∑_x \frac{k}{n-k+1}\frac1{ϑ_x}f_k^n(θ_x) \tc{\\}
    ~\tc{&}=~ \frac{k}{n-k+1}⋅\tilde{𝔼}_{\vt,k}\bigg[\frac1{ϑ_X}\bigg] 
    ~≥~ \frac{\frac{k}{n-k+1}}{\tilde{𝔼}_{\vt,k}[ϑ_X]}
\end{align*}
where we applied Jensen's inequality in the last step to convex function $1/ϑ$.
Taking the product, the dependence on unknown $\vt$ cancels out:
\begin{align}\label{eqm:Ukub}
  \tc{\bar{υ}_k := \ln\frac{μ_k}{μ_{k-1}}\frac{μ_k}{μ_{k+1}} ≤ \ln\frac{n-k+1}{n-k}\frac{k+1}{k} =: \bar{υ}_k^{ub} ≤ \frac1k + \frac1{n-k}}
  \ntc{\bar{υ}_k ~:=~ \ln\frac{μ_k}{μ_{k-1}}\frac{μ_k}{μ_{k+1}} ~≤~ \ln\frac{n-k+1}{n-k}\frac{k+1}{k} ~=:~ \bar{υ}_k^{ub} ~≤~ \frac1k + \frac1{n-k}}
\end{align}
The remainder is the same as in \Cref{sec:iidtests}.

\paragraph{Summary.}
In the table below we summarize the most important quantities for the tests $T:𝓧^n→ℝ$ derived in this section
for comparison to the ones derived in \Cref{sec:iidtests} based on the Poisson approximation.
For $τ:=𝔼[T]$ we derived tight upper bounds for all, even small $k$.
The $n\gg k\gg 1$ approximations in the table are the same but the referred to exact expressions differ.
\begin{center}
\setlength\tabcolsep{2pt}
\begin{tabular}{r|l|l|c|c|c}
  Test Name & $T:=n\bar T:=$ & $\bar{τ}:=𝔼[\bar T]≤$ & $\Var[\bar T]\lesssim\bar V^{ub}=$ & $θ^*$ & $O(\ln\fr1p$) \\ \hline
  Even$≠0$  & $E:=∑_x N_x⟦N_x≠0~\text{even}⟧$ & $\bar{ε}^{ub}=1/2$ & $\fr1n ∑_{k≠0~\text{even}}k^2 M_k$ & $\frs13~|~\frs12$ & $n$ \\
  Odd$≠1$   & $O:=∑_x N_x⟦N_x≠1~\text{odd}⟧$  & $\bar{ο}^{ub}=1/2$ & $\fr1n ∑_{k≠1~\text{odd}}k^2 M_k$ & $\frs23~|~\frs12$ & $n$ \\
  2nd-Count & $M_k:=∑_x⟦N_x=k⟧$             & $\bar{μ}_k^{ub}\smash{\stackrel{\eqref{eqm:Mkub}}{≈}}\frac1{k\sqrt{2πk}}$ & $\bar{μ}_k^{ub}$ & $\fr{k-1}{n-1}$ & $\frac{n}{k^{3/2}}$ \\
  Slope     & $D_k:=M_k-M_{k-1}$                  & $\bar{δ}_k^{ub}\smash{\stackrel{\eqref{eqm:Dkub}}{≈}}\frac1{k^2\sqrt{2πe}}$ & $\bar M_k+\bar M_{k-1}$ & $≈\fr1n[{k+\sqrt{k}}]$ & $\frac{n}{k^{5/2}}$ \\ 
  Lin.Curv. & $C_k:=2M_k\!-\!M_{k-1}\!-\!M_{k+1}$         & $\bar{γ}_k^{ub}\smash{\stackrel{\eqref{eqm:Ckub}}{≈}}\frac1{k^2\sqrt{2πk}}$ & $4\bar M_k+\bar M_{k-1}+\bar M_{k+1}$ & $≈k/n$ & $\frac{n}{k^{7/2}}$ \\
  Log.Curv. & $\bar U_k:=\ln(M_k^2/M_{k-1}M_{k+1})$    & $\bar{υ}_k^{ub}\smash{\stackrel{\eqref{eqm:Ukub}}{≈}}\frac1k$ & $\bar M_{k-1}^{-1}\!+\!4\bar M_k^{-1}\!+\!\bar M_{k+1}^{-1}$ & any & $\frac{n}{k^{7/2}}$
\end{tabular}
\end{center}

\begin{claim}[\bfm IID tests]\label{thmm:iidtests}
Consider the test statistics $\bar T$ and associated upper bounds 
on their mean $\bar{τ}$ and variance $\Var[\bar T]$ from the above table.
Then test $\bar T(x_{1:n})≥\sqrt{n}[\bar{τ}^{ub}+z_α\sqrt{\bar V^{ub}}]$
rejects that $x_{1:n}$ is iid with confidence $\gtrsim 1-α$, i.e.\ at significance level $\lesssim α$,
where $z_α:=Φ^{-1}(1-α)$ (typically $α=0.05$ and $z_{0.05}\dot=1.64$).
The conditions under which $\lesssim$ is reasonably accurate are discussed in \Cref{secm:Lemmas}.
See \Cref{lem:bubtest} (with $Z_+=T$ and $Z_x=T_x$) for $p$-values and further details.
\end{claim}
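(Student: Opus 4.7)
The plan is to follow the proof of \Cref{thm:iidtests} step by step, with three substitutions: \Cref{prop:ublt} is replaced by its multinomial analog \Cref{propm:ublt}; the mean upper bounds $\bar\tau^{ub}$ are those derived earlier in this section (equations \eqref{eqm:Mkub}, \eqref{eqm:Dkub}, \eqref{eqm:Ckub}, \eqref{eqm:Ukub}, plus the constant $1/2$ for the $E$ and $O$ tests obtained from the binomial identities at the start of the section); and the $c_n$ blow-up factor from \eqref{eq:cn}, which in \Cref{thm:iidtests} arose from approximating $P_\vt$ by the product-Poisson $P_\vl$, now disappears because the present analysis works directly under $P_\vt$.

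Concretely, I would first verify that each test decomposes as $T=\sum_x Z_x$ with $Z_x$ a bounded function of $N_x$ alone; for $\bar U_k$ I would instead use the vector form $\v Z_x=(Z_k^x,Z_{k-1}^x,Z_{k+1}^x)$ combined with the delta method of \Cref{lem:mdelta}, absorbing the $O_P(1/n)$ Taylor remainder exactly as in the proof of \Cref{thm:iidtests}. The mean bound $\mathbb{E}[T]\leq \tau_+^{ub}=n\bar\tau^{ub}$ is then immediate from the derivations preceding the claim. The variance bound $\Var[T]\leq \mathbb{E}[V^{ub}]$ is more delicate because the $N_x$ are no longer independent under $P_\vt$: they satisfy the constraint $\sum_x N_x=n$. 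However, multinomial indicators $\mathbf{1}[N_x=k]$ and $\mathbf{1}[N_{x'}=k']$ for $x\neq x'$ are pairwise non-positively correlated, so \Cref{lem:Zkxcor}(c) still yields $\Var[\sum_k\alpha_k M_k]\leq\sum_k\alpha_k^2\Var[M_k]$ for tests with non-negative weights. The mixed-sign tests $D_k, C_k, \bar U_k$ require a finer Poisson-to-multinomial variance comparison, which is precisely what \Cref{secm:Lemmas} provides.

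Once the hypotheses of \Cref{lem:bubtest} are established, parts (i) and (iv) of that lemma give the stated rejection rule and significance level $\lesssim\alpha$. The accuracy of the Gaussian tail in \Cref{lem:bubtest}(iii) is automatic for the bounded-summand tests $M_k,D_k,C_k,\bar U_k$ at fixed $k$; for $E$ and $O$ I would repeat the third-moment calculation from the end of the proof of \Cref{thm:iidtests} with $\lambda_x$ replaced by $n\theta_x$, obtaining the same rate $O((\sum_x\theta_x^3)/(\sum_x\theta_x^2)^{3/2})$. The main obstacle is the variance analysis for the mixed-sign tests under $P_\vt$: the Poisson argument in \Cref{thm:iidtests} exploits the exact independence of the $N_x$, and removing that independence in favor of the multinomial constraint $\sum_x N_x=n$ requires the careful Poisson-to-multinomial comparison carried out in \Cref{secm:Lemmas}. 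These are precisely the technical conditions to which the claim refers when it says ``discussed in \Cref{secm:Lemmas}''.
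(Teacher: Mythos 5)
Your overall plan --- mean bounds via \Cref{propm:ublt} and the section's explicit maximizations, dropping the $c_n$ factor, and deferring the variance comparison to \Cref{secm:Lemmas} before invoking \Cref{lem:bubtest} --- matches what the paper intends (note the paper states this as a \emph{Claim} rather than a theorem precisely because the multinomial variance bounds rest on \Cref{claim:VtvsVl}, whose precise conditions are stated as not yet worked out). However, your shortcut for the non-negative-weight tests contains a genuine error: it is \emph{not} true that the indicators $\llbracket N_x=k\rrbracket$ and $\llbracket N_{x'}=k'\rrbracket$ are pairwise non-positively correlated under the multinomial. The paper's own expansion in \Cref{secm:Lemmas} gives $\Cov_\vt[M_k^x,M_{k'}^{x'}]\approx -n(\theta_x-\fr{k}{n})(\theta_{x'}-\fr{k'}{n})f_k^n(\theta_x)f_{k'}^n(\theta_{x'})$, which is \emph{positive} whenever $\theta_x-k/n$ and $\theta_{x'}-k'/n$ have opposite signs (e.g.\ $x$ underrepresented while $x'$ is overrepresented), and this can happen even for $k=k'$. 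Negative association of multinomial counts does not rescue the argument, because $\llbracket N_x=k\rrbracket$ is not a monotone function of $N_x$ for $0<k<n$. Hence the hypothesis of \Cref{lem:Zkxcor}(c) fails; and even if it held, that lemma only yields $\Var[T]\le\sum_k\alpha_k^2\Var[M_k]$ with $\Var[M_k]$ still taken under $P_\vt$, i.e.\ still containing the cross-$x$ covariances for equal $k$, so you would not have escaped the multinomial-versus-Poisson comparison anyway.

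The same issue touches your Berry--Esseen step for $E$ and $O$: the identity $\sigma_+^2=\sum_x\Var[E^x]$ used at the end of the proof of \Cref{thm:iidtests} relies on independence of the $N_x$ under $P_\vl$ and does not transfer verbatim to $P_\vt$. The route consistent with the paper's lemmas is to control \emph{all} cross-$x$ covariance terms via \Cref{lem:logPoBi} and the computation leading to \Cref{claim:VtvsVl} --- for every test, not just the mixed-sign ones --- which under $k=o(n^{1/2})$ and $\sup_x\theta_x=o(n^{-1/2})$ gives $\Var_\vt[T]\lesssim \mathbb{E}_\vt[T]$ and thereby the hypotheses of \Cref{lem:bubtest}. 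So you should route every variance bound, including those for $M_k$, $E$ and $O$, through \Cref{secm:Lemmas}, and drop the negative-correlation argument.
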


\section{Technical Lemmas for Multinomial vs Poisson}\label{secm:Lemmas} 

In this section we introduce the multinomial distribution $P_\vt$ and Poisson process $P_\vl$ more carefully
with the aim to find useful relations between their means and variances for our tests.
We confirm that $𝔼_\vl[T]$ derived in \Cref{sec:iidtests}
is close to $𝔼_\vt[T]$ derived directly in \Cref{secm:iidtests}, 
not just for or specific tests but more generally.
For the variance, we have only derived expressions for $\Var_\vl$ (\Cref{sec:iidtests,sec:Lemmas}).
The results in this Section show that under certain conditions $\Var_\vt\lesssim\Var_\vl$,
which in turn allows to avoid the fudge factor $c_n\simequd\sqrt{2πn}$ used in \Cref{sec:iidtests}.
The precise conditions under which this is possible have yet to be worked out.
Since we are comparing the multinomial with the Poisson distribution,
$P$, $𝔼$, $\Var$ are appropriately indexed with $\vt$ or $\vl$.

\paragraph{Poisson distribution/process.} 
The Poisson($λ$) distribution $P_λ(k):=λ^k e^{-λ}/k!=:g_k(λ)$ for $λ≥0$ and $k∈ℕ_0$ has $𝔼_λ[k]=\Var_λ[k]=λ$. 
For finite $𝓧$, let $(Ω:=ℕ_0^𝓧,2^Ω,P_\vl)$ be the probability space of independent Poisson($λ_x$) for $x∈𝓧$. 
For random variables $\v N$ and atomic events $\{\v n\}$ with $\v n∈ℕ_0^𝓧$, 
with slight overload in notation we have
\begin{align*}
  P_\vl(\v n) ~:=~ P_\vl[\v N=\v n] ~:=~ ∏_{x∈𝓧} P_{λ_x}(n_x) ~=~ ∏_{x∈𝓧} \frac{λ_x^{n_x}e^{-λ_x}}{n_x!}
\end{align*}
where $λ_x≥0$. We will assume $∑_x λ_x=n$.
For infinite $𝓧$, one defines $P_\vl$ on all finite partitions of $𝓧$,
with $λ$ of a partition being the sum (or measure in general) of $λ_x$ over the $x$ in the partition.
These probabilities so defined on the ``cylinder'' set are indeed ``self-consistent'' 
in the sense that they can uniquely be extended 
in a standard way to a measure on $ℕ_0^𝓧$ with $σ$-algebra 
generated by the cylinders (a Poisson process). The details are of no concern to us.

What is important is that partitions of $𝓧$ are also products of Poissons.
In particular, for the cylinders,
\begin{align*}
  E_k^x ~&:=~ \{\v n∈ℕ_0^𝓧:n_x=k\} \\
  E_{\bar k}^{\bar x} ~&:=~ \{\v n∈ℕ_0^𝓧:n_{\bar x}={\bar k}\},
  ~~~\text{where}~~~ n_{\bar x}:=\textstyle ∑_{x''≠x} n_{x''} \\ 
  E_{\overline{kk'}}^{\overline{xx'}} ~&:=~ \{\v n∈ℕ_0^𝓧:n_{\overline{xx'}}=\overline{kk'}\},
  ~~~\text{where}~~~ n_{\overline{xx'}}:=\textstyle{∑_{x''∈𝓧\setminus\{x,x'\}}} n_{x''} \\
  P_\vl[E_k^x] ~&=~ P_\vl[N_x=k] ~=~ P_{λ_x}(k) ~≡~ g_k(λ_x) ~≡~ λ_x^k e^{-λ_x}/k! \\
  P_\vl[E_k^x∩E_{k'}^{x'} & ∩E_{\overline{kk'}}^{\overline{xx'}}] ~=~ 
  P_\vl[E_k^x]⋅P_\vl[E_{k'}^{x'}]⋅P_\vl[E_{\overline{kk'}}^{\overline{xx'}}] ~=~
  P_{λ_x}(k)⋅P_{λ_{x'}}(k')⋅P_{λ_{\overline{xx'}}}({\overline{kk'}}) \\
  ~~~\text{where} &~~~ λ_{\overline{xx'}} ~:=~ \textstyle{∑_{x''∈𝓧\setminus\{x,x'\}}} λ_{x''} ~=~ n-λ_x-λ_{x'}
\end{align*}
and similar for other combination of events.
For the total sample size $N:=N_+=∑_{x∈𝓧}N_x$, 
and associated event $E_n^𝓧:=\{\v n:∑_x n_x=n\}$ we have
\begin{align*}
  P_\vl[E_n^𝓧] ~≡~ P_\vl[N=n] ~=~ P_n(n) ~=~ n^n e^{-n}/n! ~\simequd~ 1/\sqrt{2πn} ~=:~ 1/c_n
\end{align*}
Independence also implies the elementary identities
\begin{align*}
  𝔼_\vl[N_x] ~&=~ \Var_\vl[N_x] ~=~ λ_x ~~~\text{and}~~~
  \Cov_\vl[N_x,N_{x'}] ~=~ 0~~~\text{for}~~~x≠x',~~~\text{hence}~~~ \\
  𝔼_\vl[N] ~&=~ ∑_x 𝔼_\vl[N_x]~=~∑_x λ_x~=~n ~~~\text{and}~~~ \Var_\vl[N]~=~∑_x\Var_\vl[N_x] ~=~∑_x λ_x~=~n
\end{align*}
This means that $N=n±O_P(\sqrt{n})$ is close to $n$ for large $n$.

\paragraph{Multinomial distribution.} 
The multinomial distribution respects similar identities as the product of Poissons, except independence.
Under certain conditions it is close to Poisson and close to independent.
Traditionally, the multinomial is defined on sample space $Ω_n:=\{\v n:∑_x n_x=n\}$.
For comparison to the Poisson this is inconvenient.
We enlarge the support from $Ω_n$ to $Ω=ℕ_0^𝓧$ and define the multinomial zero on $Ω\setminus Ω_n$.
For $𝓧=\{1:d\}$,
\begin{align*}
  P_\vt(\v n) ~:=~ P_\vt[\v N=\v n] ~:=~ \Big({n\atop n_1,...,n_d}\Big)\smash{ ∏_{x=1}^d θ_x^{n_x}}
  ~~~\text{for}~~~ ∑_x n_x=n ~~~\text{and 0 else}
\end{align*}
where $θ_x≥0$ and $∑_x θ_x=1$. 
For countable $𝓧$ the above formula still applies, since only finitely many $n_x$ can be non-zero,
and $n_x=0$ gives no contribution. 
For uncountable $𝓧$ it can be extended in the same way as $P_\vl$ by partitioning $𝓧$.
A special case is the binomial distribution
\begin{align*}
  P_θ(k) ~=~ f_k^n(θ) ~:=~ ({\textstyle{n\atop k}})θ^k(1-θ)^{n-k}~~~\text{for $0≤θ≤1$ and $k∈\{0:n\}$ and 0 else}
\end{align*}
Some identities analogous to $P_\vl$ are
\begin{align*}
  P_\vt[E_k^x] ~&=~ P_\vt[N_x=k] ~=~ P_{θ_x}(k) ~≡~ f_k^n(θ_x) \\
  P_\vt[E_k^x∩E_{k'}^{x'}] ~&=~ P_\vt[N_x=k∧N_{x'}=k'] ~=:~ f_{kk'}(θ_x,θ_{x'}) ~=~ ({\textstyle{n\atop k~k'}})θ_x^kθ_{x'}^{k'}(1-θ_x-θ_{x'})^{n-k-k'}
\end{align*}
and similar for other combination of events.
By definition, $P_\vt[N=n]=P_\vt[Ω_n]=1$.
We also have the elementary identities
\begin{align*}
  𝔼_\vt[N_x] ~&=~ nθ_x,~~~\Var_\vt[N_x] ~=~ nθ_x(1-θ_x) ~~~\text{and}~~~
  \Cov_\vt[N_x,N_{x'}] ~=~ -n θ_x θ_{x'}~~~\text{for}~~~x≠x'
\end{align*}
Note that $N_x$ and $N_{x'}$ are negatively correlated, 
and $E_k^x$ and $E_k^{x'}$ are \emph{not} independent under $P_\vt$ unlike $P_\vl$.

\paragraph{Exact relations between multinomial and Poisson distribution.} 
For $\vl=n\vt$, which we henceforth assume, the multinomial and product of Poissons are closely related.
A straightforward bound for any \emph{linear} combination $S:=∑_x β_x N_x$ of $N_x$ is 
\begin{align*}
  \Var_\vt[S] ~&=~ ∑_x β_x^2\Var_\vt[N_x] + ∑_{x≠x'} β_x β_{x'}\Cov_\vt[N_x,N_{x'}]
  ~=~ ∑_x β_x^2 n θ_x(1-θ_x) - ∑_{x≠x'} β_x β_{x'}n θ_x θ_{x'} \\
  ~&=~ ∑_x β_x^2 λ_x - n(∑_x β_x θ_x)^2 ~≤~ ∑_x β_x^2 \Var_\vl[N_x] ~=~ \Var_\vl[S]
\end{align*}
Unfortunately we need bounds for \emph{non}-linear functions of $N_x$ such as $M_k=∑_x⟦N_x=k⟧$,
which are much harder to come by.
First note that $P_\vl$ conditioned on $N=n$ exactly equals $P_\vt$:
\begin{align}\label{eq:Pltnn}
  & P_\vl[\v N=\v n|N=n] ~=~ \frac{P_\vl[\v N=\v n]}{P_\vl[N=n]} 
  ~=~ \frac{∏_x λ_x^{n_x}e^{-λ_x}/n_x!}{n^n e^{-n}/n!} 
  ~=~ \left({n\atop\v n}\right)∏_x\left(\frac{λ_x}{n}\right)^{n_x}\nq
  ~=~ P_\vt[\v N=\v n] \\ \nonumber
  & \text{hence}~~ \frac{P_\vl(\v n)}{P_\vt(\v n)} 
  ~=~ \frac{P_\vl(\v n|n)P_\vl(n)}{P_\vl(\v n|n)} ~=~ P_\vl[N=n] 
  ~=:~ \frac1{c_n} ~\simequd~ \frac1{\sqrt{2πn}} ~~\text{provided}~~ ∑_x n_x=n
\end{align}
This implies that for \emph{all} events $E⊆Ω_n$, 
$P_\vl[E|n]=P_\vt[E]$ but $P_\vl[E]=c_n⋅P_\vt[E]$, 
i.e.\ they differ by a factor of $c_n=O(\sqrt{n})$. 
The intuition is that the $P_\vt$ probability mass of $E$ 
is spread out in $P_\vl$ over $∑_x n_x=n±O(\sqrt{n})$.
For events $E$ of probability exponentially small in $n$,
the slack of a sub-polynomial $c_n$ is benign in theory,
but unfortunately not in practice.
Another useful exact relation is 
\begin{align}\label{eq:PtPPPl}
  P_\vt[E_k^x] ~&=~ P_\vl[E_k^x|N=n]
  ~=~ \frac{P_\vl[E_k^x∩\{N=n\}]}{P_\vl[N=n]} 
  ~=~ \frac{P_\vl[E_k^x∩E_{\bar k}^{\bar x}]}{P_\vl[N=n]} 
  ~=~ \frac{P_\vl[E_k^x]⋅P_\vl[E_{\bar k}^{\bar x}]}{P_\vl[N=n]}
\end{align}
where $\bar k=n-k$ and $λ_{\bar x}=∑_{x''≠x}λ_{x''}$.
This can also be verified by explicit calculation similar to \eqref{eq:Pltnn}.

On the other hand we will show that under certain conditions,
$P_\vt[E_k^x]≈P_\vl[E_k^x]$ even without conditioning on $N=n$.
For instance, we already know thar $𝔼_\vt[N_x]=nθ_x=λ_x=𝔼_\vl[N_x]$ and 
$\Var_\vt[N_x]=nθ_x(1-θ_x)≈nθ_x=λ_x=\Var_\vl[N_x]$ for small $θ_x$.
There is no contradiction to $P_\vl[E]=c_n⋅P_\vt[E]$, 
since $E_k^x\not\subseteq Ω_n$. Here the intuition is that 
$E_k^x$ itself is spread out over a wide range of $n'$,
and $P_\vl[E_k^x|n']$ is approximately independent of $n'$ at least over the range $n'∈[n±O(\sqrt{n})]$.
If this is satisfied, then 
\begin{align*}
  P_\vl[E_k^x] ~=~ ∑_{n'} P_\vl[E_k^x|N=n']P_\vl(n') ~≈~ P_\vl[E_k^x|N=n] ∑_{n'}P_\vl(n') ~=~ P_\vt[E_k^x]
\end{align*}

\paragraph{Approximate relations between multinomial and Poisson.} 
We now derive our fundamental relation between a single Poisson $P_λ(k)$ and binomial $P_θ(k)$ for $λ=nθ$,
which is the basis for all other approximations.
\begin{lemma}[\bfm Expansion of log(Poisson/binomial)]\label{lem:logPoBi}
For $κ:=k/n=:1-γ$ and $θ=λ/n$ fixed and $n→∞$,
\begin{align*}
  \ln\frac{P_λ(k)}{P_θ(k)} ~≡~ \ln\frac{g_k(λ)}{f_k^n(λ)}~=~ n(κ-θ)-nγ\ln\frac{1-θ}{γ}+\ln\sqrt{γ}+O(\frac{κ}{γn})
\end{align*}
For $κ$ close to $θ$ we can further approximate this by 
\begin{align*}
  \ln\frac{P_λ(k)}{P_θ(k)} ~=~ \frac{n}{2γ}(κ-θ)^2[1+O(\fr1{γ}|κ-θ|)] + \ln\sqrt{γ} + O(\frac{κ}{γn})
\end{align*}
For $κ,θ=o(n^{-1/2})$ this implies
\begin{align*}
  P_λ(k) ~&=~ P_θ(k)⋅[1 + (\fr{n}2 (κ-θ)^2(1+O(n(κ-θ)^2+κ))-\fr12 κ-O(κ^2+κ/n))] \\
         ~&=~ P_θ(k)⋅[1±o(1)]
\end{align*}
For $κ,θ≤cn^{-1/2-δ}$ with $c,δ>0$ it implies $P_θ(k)\lessgtr P_λ(k)⋅[1±c'n^{-2δ}]$ for some $c'<∞$.
\end{lemma}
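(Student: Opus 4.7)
The only ingredients needed are the explicit formulas \eqref{eq:binom} for $P_\theta(k)=f_k^n(\theta)$ and $P_\lambda(k)=g_k(\lambda)$ together with Stirling (\Cref{lem:stirling}); the rest is bookkeeping. First I would form the ratio: because $\lambda=n\theta$ and the $k!$ inside the Poisson cancels the $k!$ inside $\binom{n}{k}$,
\begin{align*}
\frac{P_\lambda(k)}{P_\theta(k)} ~=~ \frac{n^k\,e^{-n\theta}\,(n-k)!}{n!\,(1-\theta)^{n-k}},
\end{align*}
so only $n!$ and $(n-k)!$ need expansion.

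Next, I apply Stirling to both factorials with $k=n\kappa$, $n-k=n\gamma$. The leading pieces regroup as follows: $\ln(n-k)=\ln n+\ln\gamma$ converts $(n-k)\ln(n-k)-n\ln n$ into $-k\ln n+n\gamma\ln\gamma$, which cancels the $k\ln n$ coming from $n^k$; the Stirling $+(n-k)-(-n)=+k$ combines with $-n\theta$ to give $n(\kappa-\theta)$; the $-(n-k)\ln(1-\theta)$ combines with $n\gamma\ln\gamma$ into $-n\gamma\ln\frac{1-\theta}{\gamma}$; and the two $\tfrac12\ln(2\pi m)$ pieces leave $\ln\sqrt{\gamma}$. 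For the remainder I use the sharper form $\ln m!=m\ln\tfrac{m}{e}+\ln\sqrt{2\pi m}+\tfrac{1}{12m}+O(m^{-3})$ and compute
\begin{align*}
\tfrac{1}{12 n}-\tfrac{1}{12(n-k)} ~=~ -\frac{\kappa}{12 n\gamma},
\end{align*}
giving exactly the claimed $O(\kappa/(\gamma n))$ error; higher Stirling terms contribute only $O((n\gamma)^{-3})$ and are absorbed. This establishes the first displayed formula.

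For the near-diagonal formula, set $t:=\theta-\kappa$ so $1-\theta=\gamma-t$ and expand
\begin{align*}
-n\gamma\ln\frac{1-\theta}{\gamma} ~=~ -n\gamma\ln\!\Big(1-\tfrac{t}{\gamma}\Big) ~=~ nt + \frac{n t^2}{2\gamma} + \frac{n t^3}{3\gamma^2}\bigl[1+O(t/\gamma)\bigr];
\end{align*}
the linear term cancels $n(\kappa-\theta)=-nt$, leaving $\tfrac{n}{2\gamma}(\kappa-\theta)^2[1+O(|\kappa-\theta|/\gamma)]$ as claimed. The two asymptotic corollaries then drop out by substitution: for $\kappa,\theta=o(n^{-1/2})$ one has $\gamma\to 1$ and all three summands are $o(1)$, so exponentiating $\ln(P_\lambda/P_\theta)=y$ and using $e^y=1+y+\tfrac{y^2}{2}+\cdots$ gives the stated $[1\pm o(1)]$ expression (the $O(n(\kappa-\theta)^2)$ inside the multiplicative bracket comes from the $\tfrac{y^2}{2}$ self-interaction of the leading quadratic, and the $-\tfrac{\kappa}{2}-O(\kappa^2+\kappa/n)$ from $\ln\sqrt{\gamma}=-\tfrac{\kappa}{2}+O(\kappa^2)$ combined with the Stirling remainder); for $\kappa,\theta\le c n^{-1/2-\delta}$, the quadratic is $O(n^{-2\delta})$, $\ln\sqrt{\gamma}=O(n^{-1/2-\delta})$, and the Stirling remainder is smaller still, so $P_\theta(k)=P_\lambda(k)[1\pm c' n^{-2\delta}]$.

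The main obstacle is sharpness, not concept: obtaining the tight $O(\kappa/(\gamma n))$ rather than the naive $O(1/(\gamma n))$ requires carrying the second-order Stirling term $\tfrac{1}{12m}$ and exploiting its cancellation between $n$ and $n-k$. A secondary care point is that the cubic remainder in the Taylor step must legitimately be absorbed as a multiplicative $1+O(|\kappa-\theta|/\gamma)$ factor on the quadratic; this is valid provided $\gamma$ stays bounded away from $0$ and $|\kappa-\theta|\ll\gamma$, both of which hold automatically in the regimes where the final corollaries are applied.
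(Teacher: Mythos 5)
Your proposal is correct and follows essentially the same route as the paper: write the ratio of the two explicit formulas, apply Stirling twice (to $n!$ and $(n-k)!$), regroup to obtain the first display with the $O(\kappa/\gamma n)$ remainder coming from the cancelling $\tfrac{1}{12m}$ corrections, Taylor-expand $\ln\frac{1-\theta}{\gamma}$ for the near-diagonal form, and exponentiate for the corollaries. The only difference is presentational (you cancel $k!$ and $\theta^k$ in the ratio up front, whereas the paper subtracts the two log-expressions), so there is nothing substantive to add.
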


\begin{proof}\vspace{-4ex}
\begin{align*}
  \ln P_λ(k) ~&≡~ k\ln λ-λ-\ln k! ~=~ n(κ\ln θ+κ\ln n-θ)-\ln k! & \text{(by definition)}\\
  \ln P_θ(k) ~&≡~ \ln n!-\ln k!-\ln(γn)! + nκ\ln θ + nγ\ln(1-θ) & \text{(by definition)}\\
  \ln n!/(γn)! ~&=~ nκ\ln\fr{n}{e}-γn\ln γ-\ln\sqrt{γ}-O(κ/γn) & \text{(by 2×Stirling)} \\
  \ln[P_λ(k)/P_θ(k)] ~&=~ nκ\ln n - nθ - \ln[n!/(γn)!] - nγ\ln(1-θ) & \text{(by lines 1\&2)}\\
            ~&=~ n(κ-θ)-nγ\ln\fr{1-θ}{γ}+\ln\sqrt{γ}+O(κ/γn) & \text{(by line 3)} \\
            ~&= \fr{n}{2γ}(κ-θ)^2+\ln\sqrt{γ}+O(n(κ-θ)^3/γ^2)+O(κ/γn) & \text{(by next line)} \\
  \ln\fr{1-θ}{γ} ~&=~ \ln[1+\fr{κ-θ}{γ}] ~=~ \fr{κ-θ}{γ} - \smash{\fr{(κ-θ)^2}{2γ^2} + O(\fr{κ-θ}{γ})^3} &~~~\text{(by Taylor)}
\end{align*}
The last bound in the Lemma follows from exponentiating the previous bound,
Taylor expanding the exponential, 
and noting that even the largest term $n(κ-θ)^2→0$ for $κ,θ=o(n^{-1/2})$ and $1/γ=1+O(κ)$.
\qed\end{proof}

Assuming $k≤c⋅n^{1/2-δ}$ and $θ_x≤c⋅n^{-1/2-δ}~∀x$, 
the lemma implies $P_\vt[E_k^x]\lessgtr P_\vl[E_k^x]⋅[1±c'n^{-2δ}]$ uniformly for all $x$.
Taking the sum over $x∈𝓧$, noting that $M_k^x=⟦N_x=k⟧=⟦\v N∈E_k^x⟧$ and $M_k=∑_x M_k^x$, 
we also have $𝔼_\vt[M_k]\lessgtr 𝔼_\vl[M_k]⋅[1±c'n^{-2δ}]$.
This extends to (positive) linear combinations of $M_k$:

\begin{proposition}[{\bfm $𝔼_\vt[T]≈𝔼_\vl[T]$ and $∑_x\Var_\vt[M_k^x]≈\Var_\vl[M_k]$}]\label{lem:EtvsEl}
For $k_{max}≤c⋅n^{1/2-δ}$ and $θ_x≤c⋅n^{-1/2-δ}~∀x$, 
and random variable $T:=∑_{k≤k_{max}} α_k M_k$ with $α_k≥0$, we have 
$𝔼_\vt[T]\lessgtr 𝔼_\vl[T]⋅[1±c'n^{-2δ}]$. 
In particular, $𝔼_\vt[M_k]\lessgtr 𝔼_\vl[M_k]⋅[1±c'n^{-2δ}]$ and 
$∑_x\Var_\vt[M_k^x]\lessgtr\Var_\vl[M_k]⋅[1±c'n^{-2δ}]$ for $k≤k_{max}$. 
For general $α_k\lessgtr 0$, we have $|𝔼_\vt[T]-𝔼_\vl[T]|≤𝔼_\vl[S]⋅c'n^{-2δ}$, where $S:=∑_k|α_k|M_k$.
\end{proposition}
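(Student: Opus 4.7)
The entire statement is a consequence of the pointwise approximation in \Cref{lem:logPoBi} combined with linearity. The plan is to first upgrade \Cref{lem:logPoBi} to a uniform bound, then sum over $x$ to compare $\mathbb{E}_\vt[M_k]$ with $\mathbb{E}_\vl[M_k]$, then sum over $k$ to get the statement for $T$, and finally handle the variance and signed cases.

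\textbf{Step 1 (uniform pointwise bound).} Under the hypotheses $k \le k_{max} \le c\,n^{1/2-\delta}$ and $\theta_x \le c\,n^{-1/2-\delta}$ for all $x$, both $\kappa := k/n$ and $\theta_x$ satisfy the scaling $\le c\,n^{-1/2-\delta}$ demanded by the last bullet of \Cref{lem:logPoBi}. Hence there exists a single constant $c' < \infty$, independent of $x$ and $k$, such that
\[
 \bigl|P_{\theta_x}(k) - P_{\lambda_x}(k)\bigr| \;\le\; c'\, n^{-2\delta}\, P_{\lambda_x}(k) \qquad \forall\,x,\;\forall\,k \le k_{max}.
\]

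\textbf{Step 2 (first moment of $M_k$).} Using $\mathbb{E}_\vt[M_k] = \sum_x P_{\theta_x}(k)$ and $\mathbb{E}_\vl[M_k] = \sum_x P_{\lambda_x}(k)$, summing the Step~1 bound over $x$ yields $|\mathbb{E}_\vt[M_k] - \mathbb{E}_\vl[M_k]| \le c' n^{-2\delta}\,\mathbb{E}_\vl[M_k]$, which is the desired $\mathbb{E}_\vt[M_k] \lessgtr \mathbb{E}_\vl[M_k][1 \pm c' n^{-2\delta}]$.

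\textbf{Step 3 (first moment of $T$).} For $T = \sum_{k \le k_{max}} \alpha_k M_k$ with $\alpha_k \ge 0$, summing Step~2 weighted by $\alpha_k$ (all signs aligned) gives
\[
 \bigl|\mathbb{E}_\vt[T] - \mathbb{E}_\vl[T]\bigr| \;\le\; \sum_k \alpha_k \,c'n^{-2\delta}\mathbb{E}_\vl[M_k] \;=\; c'n^{-2\delta}\,\mathbb{E}_\vl[T].
\]
For signed $\alpha_k$, the same inequality with $|\alpha_k|$ gives $|\mathbb{E}_\vt[T] - \mathbb{E}_\vl[T]| \le c' n^{-2\delta}\mathbb{E}_\vl[S]$ with $S = \sum_k |\alpha_k| M_k$.

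\textbf{Step 4 (variance).} Here $M_k^x = \llbracket N_x = k\rrbracket$ is Bernoulli under either law, so $\Var[M_k^x] = P[N_x{=}k](1 - P[N_x{=}k])$. Under $P_\vl$ the $M_k^x$ are independent, hence $\Var_\vl[M_k] = \sum_x P_{\lambda_x}(k)(1 - P_{\lambda_x}(k))$. Combined with Step~1, for $k \ge 1$ we have $P_{\lambda_x}(k) \le 1/\sqrt{2\pi k}$ bounded away from $1$, so the factor $(1 - P)$ also satisfies $(1 - P_{\theta_x}(k)) = (1 - P_{\lambda_x}(k))(1 + O(n^{-2\delta}))$, and multiplying the two approximations gives $\Var_\vt[M_k^x] = \Var_\vl[M_k^x](1 + O(n^{-2\delta}))$ uniformly. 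Summing over $x$ concludes.

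\textbf{Main obstacle.} The real work is already packaged into \Cref{lem:logPoBi}; once the pointwise ratio bound is in hand, everything else is summation and termwise comparison, which is why the result extends cleanly to positive linear combinations and (with the absolute-value trick) to signed ones. The one delicate point is the variance for $k = 0$, where $1 - P_{\lambda_x}(0) = 1 - e^{-\lambda_x}$ can be of order $\lambda_x$ and the multiplicative approximation of the $(1-P)$ factor degrades; this forces either a restriction to $k \ge 1$ or a separate (easy) direct estimate using $(1-\theta_x)^n = e^{-\lambda_x}(1 + O(n^{-2\delta}))$ that we do not need for any of the tests in \Cref{sec:iidtests}.
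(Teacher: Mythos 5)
Your proposal is correct and follows essentially the same route as the paper: both reduce everything to the uniform pointwise ratio bound from \Cref{lem:logPoBi}, sum over $x$ for $M_k$, sum over $k$ (with $|\alpha_k|$ for the signed case) for $T$, and for the variance both hinge on the same observation that $P_{\lambda_x}(k)\le 1/\sqrt{2\pi k}\le \fr12$ for $k\ge1$ so that the complementary probability $1-P$ is also multiplicatively controlled before summing over $x$ and invoking independence under $P_\vl$. Your variance step is marginally more direct (using the Bernoulli formula $P(1-P)$ rather than the paper's comparison of $𝔼[f(M_k^x)]$ for general nonnegative $f$ together with the fact that the variance minimizes the expected squared deviation), and your explicit flag of the $k=0$ degeneracy is a point the paper leaves implicit.
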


For instance, for the slope test $D_k=M_k-M_{k-1}$,
the correction is small \emph{relative} to $𝔼_\vt[D_k]$
iff $𝔼[M_k+M_{k-1}]\ll n^δ 𝔼[D_k]$.
If $M_k$ and $M_{k-1}$ and $D_k$ scale linearly in $n$, then this is the case.
We derived upper bounds for $𝔼_θ[T]$ directly in \Cref{secm:iidtests},
so we actually don't need to be concerned about approximation error for expectations.
The stated simple bound on the variance unfortunately does not generalize to $\Var_\vt[T]$,
not even $\Var_\vt[M_k]$, nor $\Var_\vt[∑_k α_k M_k^x]$.
 
\begin{proof} 
The statement for $M_k$ has been derived above.
For $T$ it follows from linearity of the expectation and 
$|P_\vt[E_k^x]-P_\vl[E_k^x]|≤P_\vl[E_k^x]ε'$ derived in \Cref{lem:logPoBi},
where $ε':=c'n^{-2δ}$:
\begin{align*}
  |𝔼_\vt[T]-𝔼_\vl[T]| ~≤~ ∑_x|α_k|⋅|P_\vt[E_k^x]-P_\vl[E_k^x]| ~≤~ ∑_x |α_k|P_\vl[E_k^x]ε' ~=~ ε'𝔼_\vl[S]
\end{align*}
If $α_k≥0~∀k$, then $S=T$. The variance bound can be derived as follows: \Cref{lem:logPoBi} implies
\begin{align*}
  |(1-P_\vt[E_k^x])-(1-P_\vl[E_k^x])| ~=~ |P_\vl[E_k^x]-P_\vt[E_k^x]| ~≤~ ε'P_\vl[E_k^x] ~≤~ ε'(1-P_\vl[E_k^x])
\end{align*}
In the last inequality we exploited $P_\vl[E_k^x]=P_{λ_x}(k)≤P_k(k)≤\fr1{\sqrt{2πk}}≤\fr12$. Hence 
\begin{align*}
  1-P_\vt[E_k^x] ~\lessgtr~ (1±ε')(1-P_\vl[E_k^x])
\end{align*}
For any function $f$ this implies
\begin{align*}
  𝔼_\vt[f(M_k^x)] ~&=~ f(1)P_\vt[E_k^x] + f(0)(1-P_\vt[E_k^x]) \\
  ~&\lessgtr~ |f(1)|P_\vl[E_k^x](1±ε') + |f(0)|(1-P_\vl[E_k^x])(1±ε') ~=~ (1±ε')𝔼_\vl[|f(M_k^x)|]
\end{align*}
Specifically for the function $f_λ(M_k^x):=(M_k^x-μ_λ)^2≥0$, where $μ_λ:=𝔼_\vl[M_k^x]$, with $μ_θ:=𝔼_\vt[M_k^x]$, we get
\begin{align*}
  \Var_\vt[M_k^x] ~&=~ 𝔼_\vt[(M_k^x-μ_θ)^2] ~≤~ 𝔼_\vt[(M_k^x-μ_λ)^2] ~=~ 𝔼_\vt[f_λ(M_k^x)] \\
  ~&≤~ (1+ε')𝔼_\vl[f_λ(M_k^x)] ~=~ (1+ε')\Var_\vl[M_k^x]
\end{align*}
Reversing the role of $λ$ and $θ$ we get $\Var_\vl[M_k^x]≤(1+ε')\Var_\vt[M_k^x]$ in the same way.
Summing both bounds over $x$ we get $∑_x\Var_\vt[M_k^x]\lessgtr(1±ε')∑_x\Var_\vl[M_k^x]$.
The variance bound in the proposition now follows from independence of $M_k^x$ w.r.t.\ $P_\vl$.
\qed\end{proof}

\paragraph{Upper bounding multinomial variances.} 
We were able to derive upper bounds for $\Var_\vl[T]$ (in \Cref{sec:iidtests}) 
by exploiting independence $\Cov_\vl[M_k^x,M_{k'}^{x'}]=0~∀x≠x'$,
but not for $\Var_\vt[T]$, since $\Cov_\vt≠0$. 
We need to show that $\Cov_\vt$ is small.
We do this by approximating $\Cov_\vt≠0$ by $\Cov_\vl=0$.
The approximation error can be determined similarly as we did for the expectation.

Indeed, it can even be reduced to an application of \Cref{lem:logPoBi}:
In addition to the Poisson notation above, 
let $E_{kk'}^{xx'}=\{\v n:n_x+n_{x'}=k+k'\}$. Then for $x≠x'$
\begin{align*}
  & P_\vt[E_k^x∩E_{k'}^{x'}] ~=~ P_\vl[E_k^x∩E_{k'}^{x'}|N=n]
  ~=~ \frac{P_\vl[E_k^x∩E_{k'}^{x'}∩\{N=n\}]}{P_\vl[N=n]}
  ~=~ \frac{P_\vl[E_k^x∩E_{k'}^{x'} ∩E_{\overline{kk'}}^{\overline{xx'}}]}{P_\vl[N=n]} \\
  ~&=~ \frac{P_\vl[E_k^x]⋅P_\vl[E_{k'}^{x'}]⋅P_\vl[E_{\overline{kk'}}^{\overline{xx'}}]}{P_\vl[N=n]}
  ~\stackrel{\eqref{eq:PtPPPl}}=~ P_\vl[E_k^x]P_\vl[E_{k'}^{x'}]\frac{P_\vt[E_{kk'}^{xx'}]}{P_\vl[E_{kk'}^{xx'}]}
  ~≡~ g_k(λ_x)g_{k'}(λ_{x'})\frac{f_{k+k'}(θ_x+θ_{x'})}{g_{k+k'}(λ_x+λ_{x'})}
\end{align*}
Again, one could verify this also by inserting the explicit expressions.
That is, even though $E_k^x$ and $E_{k'}^{x'}$ are not independent under $P_\vt$,
the probability is a ``product'' of 3 Poissons and 1 binomial. The above identity implies
\begin{align*}
  & \Cov_\vt[M_k^x,M_{k'}^{x'}] ~≡~ P_\vt[E_k^x∩E_{k'}^{x'}] - P_\vt[E_k^x]P_\vt[E_{k'}^{x'}] \\
  ~&=~ r_{kk'}(λ_x,λ_{x'})g_k(λ_x)g_{k'}(λ_{x'}) ~~~\text{with}~~~ 
       r_{kk'}(λ_x,λ_{x'})~:=~\left[\frac{f_{k+k'}^n(θ_x+θ_{x'})}{g_{k+k'}(λ_x+λ_{x'})}-
            \frac{f_k^n(θ_x)}{g_k(λ_x)}⋅\frac{f_{k'}^n(θ_{x'})}{g_{k'}(λ_{x'})}\right] \\
  ~&=~ r_{kk'}(θ_x,θ_{x'})f_k^n(θ_x)f_{k'}^n(θ_{x'}) ~~~\text{with}~~~ 
       r_{kk'}(θ_x,θ_{x'})~:=~ \left[\frac{g_k(λ_x)}{f_k^n(θ_x)}
       \frac{g_{k'}(λ_{x'})}{f_{k'}^n(θ_{x'})}
       \frac{f_{k+k'}^n(θ_x+θ_{x'})}{g_{k+k'}^n(λ_x+λ_{x'})} ~-~ 1 \right]
\end{align*}
The first/second expression is more convenient for theoretical/empirical upper bounds.
\Cref{lem:logPoBi} shows that $g_k/f_k^n→1$ for $κ,θ=O(n^{-1/2-δ})$ with $δ>0$,
hence $r_{kk'}$ tends to 0. More precisely 
\begin{align*}
  r_{kk'}(θ,θ') ~&=~ \fr{n}2(κ-θ)^2⋅(1+O(n^{-δ'}))-\fr12 κ ~+~ \fr{n}2(κ'-θ')^2⋅(1+O(n^{-δ'}))-\fr12 κ' \\ 
                ~&~~~-~ \fr{n}2(κ+κ'-θ-θ')^2⋅(1+O(n^{-δ'}))+\fr12(κ+κ') ~+~ O(n^{-δ''}) \\
               ~&=~ -n(θ-κ)(θ'-κ')⋅(1+O(n^{-δ'}))+O(n^{-δ''}) ~=...=~ r_{kk'}(λ,λ')
\end{align*}
where $δ':=\min\{\fr12+δ,2δ\}$ and $δ'':=\min\{1+2δ,\fr32+δ\}$.
If we drop all $O()$-terms we get
\begin{align*}
  \Var_\vt[M_k] ~&=~ \textstyle ∑_x \Var_\vt[M_k^x] + ∑_{x≠x'}\Cov_\vt[M_k^x,M_{k'}^{x'}] \\
  ~&≈~ \textstyle ∑_x \Var_\vt[M_k^x] - ∑_{x≠x'}n(θ_x-κ)(θ_{x'}-κ') f_k^n(θ_x)f_{k'}^n(θ_{x'}) \\
  ~&=~ \textstyle ∑_x \Var_\vt[M_k^x] - n∑_{x,x'}f_k^n(θ_x)(θ_x-κ)f_{k'}^n(θ_{x'})(θ_{x'}-κ') + n∑_x[f_k^n(θ_x)(θ-κ)]^2  \\
  ~&=~ \textstyle ∑_x \Var_\vt[M_k^x] - n[∑_x f_k^n(θ_x)(θ_x-κ)]^2 + n∑_x[f_k^n(θ_x)(θ_x-κ)]^2
\end{align*}
The middle term is negative. Adapting \Cref{lem:ubfe}, 
or a bit more convenient using $f_k^n(θ)≈g_k(λ)$ by \Cref{lem:logPoBi} and \Cref{lem:ubfe} directly,
the last term can be upper bounded as
\begin{align*}
  & n∑_x[f_k^n(θ_x)(θ_x-κ)]^2 ~≈~ \frac1n ∑_x[g_k(λ_x)(λ_x-k)]^2
  ~≤~ \sup_{λ>0} \frac{[g_k(λ)(λ-k)^2]}{λ} \\
  ~&\stackrel{\eqref{eq:gdelta}}=~ \left[\sup_{λ>0} λ^{3/2}\frac{g_δ(λ)}{λ}\right]^2
  ~≈~ \left[(λ_+^*)^{3/2} \sup_{λ>0}\frac{g_δ(λ)}{λ}\right]^2 ~=~ λ_+^*{g_δ(λ_+^*)^2} ~≈~ k g_δ(k)^2
  ~\stackrel{\eqref{eq:Dkub}}{≈}~ \frac1{2πek}
\end{align*}
which is very small compared to the typically linearly in $n$ scaling $\Var_\vt[M_k]$.
Ultimately we need an upper bound in terms of $𝔼[M_k]$, so
\begin{align*}
  \textstyle ∑_x \Var_\vt[M_k^x] ~&=~ \textstyle ∑_x 𝔼_\vt[(M_k^x)^2]-∑_x 𝔼_\vt[M_k^x]^2 ~=~ 𝔼_\vt[M_k]-∑_x f_k^n(θ_x)^2,~~~\text{hence}~~~ \\
  \Var_\vt[M_k] ~&\lesssim~  \textstyle 𝔼_\vt[M_k] + ∑_x[f_k^n(θ_x)^2[n(θ_x-κ)]^2-1]
\end{align*}
The same line of reasoning as above shows that 
\begin{align*}
  \textstyle ∑_x[f_k^n(θ_x)^2[n(θ_x-κ)]^2-1] ~&≈~ k g_δ(k)^2-\fr{n}{k} g_k(k)^2 ~≈~ \fr1{2πk}[\fr1e-\fr{n}{k}] ~≤~ 0
\end{align*}
The arguments above readily extend to linear combinations of $M_k$ (cf.\ \Cref{lem:evcrv}):

\begin{claim}[\bfm ${\Var_\vt[T]\lesssim 𝔼_\vt[T]}$]\label{claim:VtvsVl}
For $k_{max}=o(n^{1/2})$ and $\sup_x θ_x=o(n^{-1/2})$ and  $T:=∑_{k≤k_{max}} α_k M_k$ with $α_k∈ℝ$,
we have $\Var_\vt[T]\lesssim 𝔼_\vt[T]$.
The smaller $k_{max}$ and $θ_x$, the better the accuracy,
The relative error is small under suitable further conditions.
\end{claim}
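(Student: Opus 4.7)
The plan is to expand $\operatorname{Var}_{\vt}[T]=\sum_{k,k'}\alpha_k\alpha_{k'}\operatorname{Cov}_{\vt}[M_k,M_{k'}]$ into a ``diagonal'' part $\sum_x \operatorname{Var}_{\vt}[M_k^x]$ and an ``off-diagonal'' part $\sum_{x\neq x'}\operatorname{Cov}_{\vt}[M_k^x,M_{k'}^{x'}]$, using the decomposition $M_k=\sum_x M_k^x$ with $M_k^x:=\llbracket N_x=k\rrbracket$. For the diagonal part, Proposition~\ref{lem:EtvsEl} already gives $\sum_x\operatorname{Var}_{\vt}[M_k^x]\lessgtr (1\pm o(1))\operatorname{Var}_{\vl}[M_k]\le (1+o(1))\mathbb{E}_{\vl}[M_k]$, and combined with $\mathbb{E}_{\vl}[M_k]\simequd \mathbb{E}_{\vt}[M_k]$ this already yields the diagonal contribution to $\mathbb{E}_{\vt}[T]$ via \Cref{lem:evcrv,lem:Zkxcor}. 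So the real task is to handle the off-diagonal cross-terms and show they are absorbed into (or smaller than) the diagonal term.

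For the cross-terms I would use the exact identity displayed just before the claim,
\[
  \operatorname{Cov}_{\vt}[M_k^x,M_{k'}^{x'}] \;=\; r_{kk'}(\lambda_x,\lambda_{x'})\,g_k(\lambda_x)\,g_{k'}(\lambda_{x'}),
\]
together with the expansion $r_{kk'}(\lambda,\lambda')=-n(\theta-\kappa)(\theta'-\kappa')(1+O(n^{-\delta'}))+O(n^{-\delta''})$ derived in the excerpt via \Cref{lem:logPoBi}. Substituting this into $\sum_{x\neq x'}\alpha_k\alpha_{k'}\operatorname{Cov}_{\vt}[M_k^x,M_{k'}^{x'}]$, the leading order rearranges as
\[
  -n\Bigl(\sum_x \alpha_k (\theta_x-\kappa) g_k(\lambda_x)\Bigr)\Bigl(\sum_{x'}\alpha_{k'}(\theta_{x'}-\kappa')g_{k'}(\lambda_{x'})\Bigr) \;+\; n\sum_x \alpha_k^2 (\theta_x-\kappa)^2 g_k(\lambda_x)^2,
\]
after summing over $k,k'$ as well. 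The first (tensorised) term is a non-positive square, so it only \emph{helps} the bound. The remaining ``self-square'' residual is controlled by applying \Cref{lem:ubfe} to $g_\delta(\lambda)$ as in \eqref{eq:gdelta}--\eqref{eq:Dkub}, giving a contribution of order $\sum_k \alpha_k^2\cdot O(1/k)$, which is dominated by $\sum_k \alpha_k^2 \mathbb{E}_{\vt}[M_k]$ since $\mathbb{E}_{\vt}[M_k]$ scales linearly in $n$ for the regime of interest.

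Concretely, the steps I would execute in order are: (i) reduce to the single-$k$ case via bilinearity and the triangle inequality $|\operatorname{Cov}_{\vt}[M_k,M_{k'}]|\le \sqrt{\operatorname{Var}_{\vt}[M_k]\operatorname{Var}_{\vt}[M_{k'}]}$ to isolate where care is really needed; (ii) apply the Poisson/binomial comparison \Cref{lem:logPoBi} uniformly over $k\le k_{\max}$ and $\theta_x$, keeping track of the $O(n^{-\delta'})$ and $O(n^{-\delta''})$ error terms — this requires the hypotheses $k_{\max}=o(\sqrt n)$ and $\sup_x\theta_x=o(n^{-1/2})$ to make $\kappa,\theta$ both small; (iii) plug into the covariance formula and recognise the leading term as $-n(\sum_x\cdots)^2\le 0$; (iv) upper-bound the self-square residual by \Cref{lem:ubfe} applied to $\lambda\mapsto(\lambda-k)^2 g_k(\lambda)^2/\lambda$, getting a quantity of order $1/k$ per term; (v) combine with the diagonal bound from Proposition~\ref{lem:EtvsEl} to conclude $\operatorname{Var}_{\vt}[T]\lesssim \mathbb{E}_{\vt}[T]$, with a relative error that goes to zero as $k_{\max}/\sqrt n\to 0$ and $\sqrt n\sup_x\theta_x\to 0$.

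The main obstacle is book-keeping uniformity of the error terms in \Cref{lem:logPoBi} across both indices $(x,k)$ simultaneously and guaranteeing that the $O(n^{-\delta''})$ remainder, when summed over $x\neq x'$, does not accumulate to something larger than the diagonal term. This is where the assumptions $k_{\max}=o(\sqrt n)$ and $\sup_x\theta_x=o(n^{-1/2})$ earn their keep: they ensure both that $r_{kk'}$ factorises to leading order into a product separating $x$ and $x'$ (allowing the cancellation against a non-positive square), and that the remainder after that cancellation is of strictly lower order than $\sum_k\alpha_k^2\mathbb{E}_{\vt}[M_k]$. The tighter (``relative error small'') claim at the end would require additionally that $\sum_k\alpha_k^2 \mathbb{E}_{\vt}[M_k]$ itself grows at least polynomially in $n$, which is the case for all tests in the summary table but deserves to be stated as a side condition.
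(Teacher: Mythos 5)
Your proposal is correct and follows essentially the same route as the paper's own derivation (the informal argument in the paragraphs preceding the claim): the same diagonal/off-diagonal decomposition of $\Var_\vt[T]$, the same exact identity $\Cov_\vt[M_k^x,M_{k'}^{x'}]=r_{kk'}(\lambda_x,\lambda_{x'})g_k(\lambda_x)g_{k'}(\lambda_{x'})$ with the expansion of $r_{kk'}$ from \Cref{lem:logPoBi}, the same recognition of the tensorised cross-term as a non-positive square, and the same use of \Cref{lem:ubfe} to bound the residual self-square term by $O(1/k)$. The only cosmetic differences are that you route the diagonal term through \Cref{lem:EtvsEl} rather than the direct identity $\sum_x\Var_\vt[M_k^x]=\mathbb{E}_\vt[M_k]-\sum_x f_k^n(\theta_x)^2$, and that you state explicitly the side condition (growth of $\sum_k\alpha_k^2\,\mathbb{E}_\vt[M_k]$) that the paper leaves as ``suitable further conditions''.
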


\paragraph{Law of total variation.} 
Let $T$ be a random variable of interest, e.g.\ one of our test statistics $M_k$ or $U_k$, etc.
Another potential approach towards proving \Cref{claim:VtvsVl} is using the law of total variation:
\begin{align*}
  \Var_\vl[T] ~&=~ 𝔼_\vl[\Var_\vl[T|N]] + \Var_\vl[𝔼_\vl[T|N]] ~=~ \smash{∑_{n'}}\Var_\vl[T|N=n']P_\vl[N=n'] + \Var_\vl[𝔼_\vl[T|N]]
\end{align*}
Note that $\vl=n\vt$ are fixed as before ($\vl≠n'\vt$ unless $n'=n$). 
We know that $P_\vl[N=n']$ has mean and variance $n$ with light tails, 
so concentrates around $n'∈[n±(O\sqrt{n})]$.
If $\Var_\vl[T|N=n']$ does not change much in this interval, 
then the sum can be approximated by $\Var_\vl[T|N=n]≡\Var_\vt[T]$.
This implies
\begin{observation}[\bfm{$\Var_\vt[T]\lesssim\Var_\vl[T]$ or even $\Var_\vt[T]≈\Var_\vl[T]$}]
If $\Var_\vt[T|N=n']$ does not change much for $n'∈[n±(O\sqrt{n})]$, 
then $\Var_\vt[T]\lesssim\Var_\vl[T]$.
If in addition $𝔼_\vl[T|N=n']$ does not change much for $n'∈[n±O(\sqrt{n})]$,
then $\Var_\vt[T]≈\Var_\vl[T]$.
\end{observation}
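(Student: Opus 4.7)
The plan is to apply the law of total variance to $T$ under $P_\vl$, conditioning on the total count $N$, and then exploit that $P_\vl[\,\cdot\mid N=n] = P_\vt$ exactly (this is \eqref{eq:Pltnn}), so that $\Var_\vl[T\mid N=n] = \Var_\vt[T]$ is literally one term in the decomposition. The only ``work'' left is to argue that this term dominates the decomposition when the hypotheses of the observation are met.

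\textbf{Step 1.} Write $\Var_\vl[T] = \mathbb{E}_\vl[\Var_\vl[T\mid N]] + \Var_\vl[\mathbb{E}_\vl[T\mid N]]$. Both summands are non-negative, so in particular $\Var_\vl[T] \ge \mathbb{E}_\vl[\Var_\vl[T\mid N]]$.

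\textbf{Step 2} (proving $\Var_\vt[T]\lesssim\Var_\vl[T]$). Recall $N\sim\text{Poisson}(n)$, so $\mathbb{E}_\vl[N]=\Var_\vl[N]=n$ and $N$ concentrates on $[n\pm O_P(\sqrt n)]$ with sub-exponential tails. The assumption is that $\Var_\vl[T\mid N=n']$ is essentially constant for $n'$ in this bulk, so
\[
\mathbb{E}_\vl[\Var_\vl[T\mid N]] \;\simeq\; \Var_\vl[T\mid N=n] \;=\; \Var_\vt[T].
\]
Combining with Step~1 gives $\Var_\vt[T]\lesssim\Var_\vl[T]$. Formally I would split the expectation into the bulk $|N-n|\le Kn^{1/2}\log n$ (where by hypothesis the conditional variance is within a $(1\pm o(1))$ factor of $\Var_\vt[T]$) and a tail, and bound the tail using Chernoff for the Poisson together with a crude a priori bound on $\Var_\vl[T\mid N=n']$ (e.g.\ for linear $T=\sum_k \alpha_k M_k$, $\Var_\vl[T\mid N=n'] \le C n'^{\,2}$).

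\textbf{Step 3} (upgrading to $\Var_\vt[T]\approx\Var_\vl[T]$). Under the additional hypothesis that $\mathbb{E}_\vl[T\mid N=n']$ is nearly constant on the bulk, the random variable $\mathbb{E}_\vl[T\mid N]$ is almost surely (on the bulk) close to $\mathbb{E}_\vt[T]$, so $\Var_\vl[\mathbb{E}_\vl[T\mid N]] = o(\Var_\vl[T])$. Then the two terms in the total-variance identity collapse to the single term $\mathbb{E}_\vl[\Var_\vl[T\mid N]]\simeq\Var_\vt[T]$, yielding $\Var_\vt[T]\approx\Var_\vl[T]$.

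\textbf{Main obstacle.} The only delicate point is making the informal ``does not change much'' quantitative enough to beat the tail of $N$. Specifically, I need a first-order (and ideally second-order) Lipschitz-style control $|\Var_\vl[T\mid N=n']-\Var_\vl[T\mid N=n]|\le \varepsilon(|n'-n|/\sqrt n)\cdot\Var_\vl[T]$ on an event of $P_\vl$-probability $1-o(1)$, together with a polynomial a~priori bound on the tail so that large-$|N-n|$ contributions are negligible. For concrete $T=\sum_k \alpha_k M_k$ with $|\alpha_k|$ polynomial in $k$, both ingredients can be obtained from the binomial/Poisson comparison in \Cref{lem:logPoBi} and \Cref{lem:EtvsEl}, applied at sample sizes $n'$ near $n$; I would not grind through this here, since the observation is explicitly conditional on these regularity hypotheses.
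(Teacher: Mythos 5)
Your proposal is correct and follows essentially the same route as the paper: the law of total variance $\Var_\vl[T]=𝔼_\vl[\Var_\vl[T|N]]+\Var_\vl[𝔼_\vl[T|N]]$ combined with $P_\vl[\cdot|N=n]=P_\vt$ from \eqref{eq:Pltnn}, non-negativity of the second term for the $\lesssim$ direction, and near-constancy of the conditional mean for the $≈$ upgrade. Your Step 2/``main obstacle'' discussion of splitting off the Poisson tail is actually more explicit than the paper's own (deliberately informal) justification.
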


\par\vspace{0pt plus \textheight}
\begin{samepage}
\section{List of Notation}\label{sec:Notation}

\begin{tabbing}
  \hspace{0.13\textwidth} \= \hspace{0.13\textwidth}\= \hspace{0.60\textwidth} \= \kill
  {\bf Symbol }      \> {\bf Type} \> {\bf Explanation} \\[0ex]
  $a/bc=a/(bc)$      \>                \> while $a/b⋅c=(a/b)⋅c$ though we actually always bracket the latter  \\[0ex]
  $⟦\text{bool}⟧$ \> $∈\{0,1\}$  \> =1 if bool=True, =0 if bool=False \\[0ex]
  $i,j$              \> $∈ℕ$           \> generic indices \\[0ex] 
  $\{i:j\}$          \> $⊂ℤ$          \> set of integers from $i$ to $j$ (empty if $j<i$) \\[0ex] 
  $ℝ,ℝ^+,ℝ^+_0$     \>                \> reals, strictly positive reals, non-negative reals \\[0ex]
  $|𝓧|$            \> $≡\#𝓧$       \> size of set $𝓧$. \\[0ex]
  $x$                \> $∈𝓧$         \> single sample \\[0ex]
  $𝓧$              \>                \> sample space of size $d=|𝓧|$, mostly $d=∞$ and $𝓧$ countable. \\[0ex]
  $𝓧'$             \> $\nq\nq=\{x:θ_x>0\}$ \> all $x$ potentially observable $d'=|𝓧'|$. \\[0ex]
  $𝓧''$            \> $\nq\nq=\{x:n_x>0\}$ \> all $x$ actually observed. $d''=|𝓧''|$ \\[0ex]
  $n$                \>                \> number of samples, sample size \\[0ex]
  $X$                \>                \> $𝓧$-valued random variable \\[0ex] 
  $\v X$             \> $≡X_{1:n}$     \> $n$ iid or exchangeable random variables \\[0ex]
  $\v x≡x_{1:n}$     \> $∈𝓧^n$       \> sample of size $n$ \\[0ex]
  $t$                \> $∈\{1:n\}$     \> sample index \\[0ex]
  $k$                \> $∈ℕ_0$        \> second-order multiplicity index \\[0ex]
  $N_x~=~\#\{X_t:X_t=x\}$ \>           \> (first-order) count=multiplicity of $x$ in $\v X$ \\[0ex]
  $M_k~=~\#\{x:N_x=k\}$ \>             \> (second-order) count=multiplicity of $k$ in $\v N$ \\[0ex]
  $\v M~=~(M_1,M_2,...)$ \>            \> vector of $M_k$ excluding $M_0$, also $M_+:=M_1+M_2+...$ \\[0ex]
  $x,n_x,m_k,\v m,...$ \>              \> realization of random variable $X,N_x,M_k,\v M,...$ \\[0ex]
  $P(x)~:=~ P[X=x]$  \>                \> probability that $X$ is $x$ \\[0ex]
  $P_θ(k)$           \> $≡f_k^n(θ)$     \> $:=({n\atop k})θ^k(1-θ)^{n-k}$ binomial distribution over $ℕ_0$ \\[0ex]
  $P_\vt$            \> $∈\Hiid$       \> iid (multinomial) distribution over $𝓧^n$ ($ℕ_0^𝓧$) \\[0ex]
  $P_λ(k)$           \> $≡g_k(λ)$     \> $:=λ^k e^{-λ}/k!$ Poisson distribution over $ℕ_0$ \\[0ex]
  $P_\vl$            \>                \> product of Poisson($λ_x$) distributions over $\v n∈ℕ_0^𝓧$ \\[0ex]
  $Q$                \> $∈𝓠$         \> exchangeable distribution \\[0ex]
  $Y,Z$              \>                \> generic random variables \\[0ex]
  $𝔼$                \>                \> expectation w.r.t.\ $P_\vt$ or $P_\vl$ unless otherwise noted \\[0ex]
  $σ^2~=~\Var[Z]$    \> $:=~𝔼[Z^2]-𝔼[Z]^2$ \> ~~~~~~~~~~~~~~variance of $Z$ and other random variables\\[0ex]
  $\Cov[Y,Z]$         \> $:=~𝔼[YZ]-𝔼[Y]𝔼[Z]$ \> ~~~~~~~~~~~~~~covariance of $Y$ and $Z$ \\[0ex]
  $ρ~=~\W[Z]$        \> $:=~𝔼[|Z-ζ|^3]$ \> ~~~~~~~~~~~~~~third absolute central moment \\[0ex]
  $O(),Θ()$          \>                \> classical $O()$ notation \\[0ex]
  $O_P()$            \>                \> stochastic $O$-notation \\[0ex]
  $≈$                \>                \> approximately equal, informal \\[0ex]  
  $f(n)\lesssim g(n)$ \>               \> means $f(n)≤g(n)⋅[1+O_P(1/\sqrt{n})]$, and similarly $\gtrsim$ and $\simequd$ \\[0ex]
  $f(n)\lessgtr g(n)⋅[1±ε]$ \>       \> means $f(n)≤g(n)⋅[1+ε]$ and $f(n)≥g(n)⋅[1-ε]$\\[0ex]
  $\dot=$ ~~~ e.g.   \> $z\dot=1.64$   \> equal to within the number of displayed digits \\[0ex]
  $:=,~≡,~\smash{\stackrel!=}$ \>      \> definition, equal by earlier definition, want it to be equal \\[0ex]
  $Z_x$              \>                \> collection of random variables with $x∈𝓧$ \\[0ex]
  $Z_+$              \> $:=∑_x Z_x$    \> sum of random variables \\[0ex]
  $\bar Z$           \> $:=Z_+/n$      \> \emph{not} an average of random variables; also $\bar Y=Y/n$ \\[0ex]
  $ζ$                \> $:=𝔼[Z]$       \> corresponding lower-case greek letters denote expectation \\[0ex]
  $ζ^{ub}$           \> $∈ℝ$           \> upper bound on expectation \\[0ex]
  $V^{ub}$           \>                \> deterministic or stochastic upper bound on variance \\[0ex]
  $\dot{ε}_k,\ddot{ε}_k,...$ \> $∈ℝ_0^+$ \> small corrections $≥0$ tending to $0$ for $k→∞$ \\[0ex]
  $\tilde{ε}_k$      \> $∈ℝ$           \> small correction tending to $0$ for $k→∞$ \\[0ex]
  $T$                \> $:𝓧^n→ℝ$    \> generic test statistic \\[0ex]
  $\tilde T$         \>                \>  uniformized test statistic ($P[\tilde T≤α]=α$) \\[0ex]
  $E,O,M_k,D_k,C_k,\bar U_k$ \>             \>specific test statistics \\[0ex]
  $α~~~~=~P_\vt[T>c_α]$ \>                \> Type~I error, prob.\ of falsely rejecting $\Hiid$, significance level \\[0ex]
  $β(α)~=~Q[T>c_α]$  \>                \> power of test $T$ at level $α$ for $Q$ \\[0ex]
  $\diamondsuit$     \>                \> end of example \& end of notation \& end of paper \\[0ex]
\end{tabbing}
\end{samepage}
 
\end{document}